\theoremstyle{plain}
\newtheorem{theorem}{Theorem}[section]
\newtheorem{lemma}[theorem]{Lemma}
\newtheorem{proposition}[theorem]{Proposition}
\newtheorem{corollary}[theorem]{Corollary}
\theoremstyle{definition}
\newtheorem{remark}[theorem]{Remark}
\newtheorem{example}[theorem]{Example}
\newcommand\bA{{\mathbb A}}
\newcommand\bG{{\mathbb G}}
\newcommand\bZ{{\mathbb Z}}
\newcommand\cC{{\mathcal C}}
\newcommand\cF{{\mathcal F}}
\newcommand\cL{{\mathcal L}}
\newcommand\cM{{\mathcal M}}
\newcommand\cO{{\mathcal O}}
\newcommand\rD{{\rm D}}
\newcommand\rF{{\rm F}}
\newcommand\rH{{\rm H}}
\newcommand\rR{{\rm R}}
\newcommand\rV{{\rm V}}
\newcommand\rX{{\rm X}}
\newcommand\tcC{\widetilde{\mathcal C}}
\newcommand\tcF{\widetilde{\mathcal F}}
\newcommand\tcL{\widetilde{\mathcal L}}
\newcommand\wA{\widehat{A}}
\newcommand\wB{\widehat{B}}
\newcommand\wvarphi{\widehat{\varphi}}
\renewcommand\deg{{\rm deg}}
\newcommand\id{{\rm id}}
\newcommand\ind{{\rm ind}}
\newcommand\res{{\rm res}}
\newcommand\rk{{\rm rk}}
\newcommand\Coh{{\rm Coh}}
\newcommand\HVec{{\rm HVec}}
\newcommand\tmod{\text -{\rm mod}}
\newcommand\tMod{\text -{\rm Mod}}
\newcommand\QCoh{{\rm QCoh}}
\newcommand\UVec{{\rm UVec}}
\renewcommand\Vec{{\rm Vec}}
\newcommand\Aut{{\rm Aut}}
\newcommand\Coker{{\rm Coker}}
\newcommand\End{{\rm End}}
\newcommand\Ext{{\rm Ext}}
\newcommand\Gal{{\rm Gal}}
\newcommand\GL{{\rm GL}}
\newcommand\Hom{{\rm Hom}}
\newcommand\Ind{{\rm Ind}}
\newcommand\Ker{{\rm Ker}}
\newcommand\Pic{{\rm Pic}}
\newcommand\Pro{{\rm Pro}}
\newcommand\Sym{{\rm Sym}}
\numberwithin{equation}{section}
\title[Homogeneous vector bundles]
{Homogeneous vector bundles over abelian varieties 
via representation theory}
\author{Michel Brion}
\date{}
\begin{document}

\begin{abstract}
Let $A$ be an abelian variety over a field. 
The homogeneous (or translation-invariant) vector bundles 
over $A$ form an abelian category $\HVec_A$; 
the Fourier-Mukai transform yields an equivalence of 
$\HVec_A$ with the category of coherent sheaves with finite support 
on the dual abelian variety. In this paper, we develop an alternative 
approach to homogeneous vector bundles, based on the equivalence 
of $\HVec_A$ with the category of finite-dimensional representations 
of a commutative affine group scheme (the ``affine fundamental group'' 
of $A$). This displays remarkable analogies between homogeneous 
vector bundles over abelian varieties and representations of split
reductive algebraic groups.
\end{abstract}

\maketitle

\tableofcontents

\section{Introduction}
\label{sec:int}

Consider an abelian variety $A$ over a field $k$. 
A vector bundle over $A$ is said to be homogeneous, 
if it is invariant under pullback by translations in $A$;
for instance, the homogeneous line bundles are 
parameterized by $\Pic^0(A) = \wA(k)$, 
where $\wA$ denotes the dual abelian variety. 
The homogeneous vector bundles have been extensively 
studied when $k$ is algebraically closed. As shown by Mukai, 
they form an abelian category $\HVec_A$ which is equivalent 
to the category of coherent sheaves with finite support on 
$\wA$ via the Fourier-Mukai transform. Moreover, any 
homogeneous vector bundle has a canonical decomposition
\begin{equation}\label{eqn:idec}
E = \bigoplus_{L \in \wA(k)} L \otimes U_L,
\end{equation}
where each $U_L$ is a unipotent vector bundle, i.e., 
an iterated extension of trivial line bundles; the unipotent 
bundles form an abelian subcategory $\UVec_A$, 
equivalent to that of coherent sheaves on $\wA$ 
with support at the origin (see \cite[\S 4]{Mukai} 
for these results). Also, the homogeneous vector bundles 
can be characterized as the iterated extensions 
of algebraically trivial line bundles (a result of Miyanishi,
see \cite[\S 2]{Miyanishi}), or as the numerically flat 
vector bundles (this is due to Langer, see \cite[\S 6]{La12}).

In this paper, we develop an alternative approach to 
homogeneous vector bundles via representation theory,
over an arbitrary ground field $k$. The starting point  
is again a result of Miyanishi in \cite{Miyanishi}: 
every such bundle is an associated vector bundle 
$G \times^H V \to G/H = A$, 
for some extension of algebraic groups 
$1 \to H \to G \to A \to 1$ with $H$ affine and some 
finite-dimensional $H$-module $V$. 
To handle all homogeneous vector bundles simultaneously, 
we take the inverse limit of all such extensions; this yields 
the universal extension of $A$ by an affine $k$-group scheme,
\begin{equation}\label{eqn:iuniv} 
1 \longrightarrow H_A \longrightarrow G_A
\longrightarrow A \longrightarrow 1,
\end{equation}
where $G_A$ is a quasi-compact $k$-group scheme
(see \cite[IV.8.2.3]{EGA}).
We show that the associated vector bundle construction 
$V \mapsto G_A \times^{H_A} V$ yields an equivalence
of categories
\begin{equation}\label{eqn:iequiv} 
\cL_A :  H_A\tmod \stackrel{\simeq}{\longrightarrow} \HVec_A, 
\end{equation}
where $H_A\tmod$ denotes the category of finite-dimensional 
$H_A$-modules (Theorem \ref{thm:equiv}).
Moreover, $\cL_A$ induces isomorphisms of extension groups
\begin{equation}\label{eqn:iext}
\Ext^i_{H_A}(V,W) \stackrel{\simeq}{\longrightarrow}
\Ext^i_A(\cL_A(V),\cL_A(W)) 
\end{equation}
for all $i \geq 0$ and all finite-dimensional $H_A$-modules
$V,W$ (Theorem \ref{thm:ext}). In particular, homogeneity 
is preserved under extensions of vector bundles.

The $k$-group schemes $G_A$ and $H_A$ are not of finite type, 
but their structure is rather well-understood (see 
\cite[\S 3.3]{Br18} and \S \ref{subsec:univ}). 
In particular, $G_A$ is commutative and geometrically integral; 
its formation commutes with base change under arbitrary 
field extensions. Moreover, $H_A$ is an extension of a unipotent 
group scheme $U_A$ by the group scheme of multiplicative 
type $M_A$ with Cartier dual $\wA(\bar{k})$. Also, 
$U_A$ is a vector group of dimension $g = \dim(A)$ 
if $k$ has characteristic $0$; in positive characteristics, 
$U_A$ is profinite (see \S \ref{subsec:univ} again). By Theorem 
\ref{thm:unip}, the equivalence of categories (\ref{eqn:iequiv}) 
induces an equivalence 
\[ U_A\tmod \stackrel{\simeq}{\longrightarrow} \UVec_A. \]

Homogeneous vector bundles are also preserved under tensor 
product and duality; clearly, these operations on $\HVec_A$
correspond via (\ref{eqn:iequiv}) to the tensor product and 
duality of $H_A$-modules (the description of these operations
in terms of Fourier-Mukai transform is more involved, 
see \cite[Thm.~4.12]{Mukai}). Also, homogeneity is preserved 
by pushforward and pullback under isogenies; 
we show that these correspond again to natural operations on 
modules, in view of a remarkable invariance property of 
the extension (\ref{eqn:iuniv}). More specifically, 
let $\varphi: A \to B$ be an isogeny of abelian varieties, 
with kernel $N$. Then $\varphi$ induces an isomorphism 
$G_A \stackrel{\simeq}{\longrightarrow} G_B$ 
and an exact sequence
\begin{equation}\label{eqn:iH_AH_B}
0 \longrightarrow H_A \longrightarrow H_B
\longrightarrow N \longrightarrow 0
\end{equation}
(Proposition \ref{prop:fun}). Moreover, the pushforward 
(resp.~pullback) under $\varphi$ yields an exact functor
$\varphi_* : \HVec_A \to \HVec_B$
(resp.~$\varphi^* : \HVec_B \to \HVec_A$), which may be 
identified with the induction, $\ind_{H_A}^{H_B}$ 
(resp.~the restriction, $\res_{H_A}^{H_B}$) via the
equivalences of categories $\cL_A$, $\cL_B$ 
(Theorem \ref{thm:isogeny}). 

In particular, the homogeneous vector bundle
$\varphi_*(\cO_A)$ on $B$ corresponds to 
the $H_B$-module $\cO(N)$, where $H_B$ acts via
the regular representation of its quotient group $N$; thus,
the indecomposable summands of $\varphi_*(\cO_A)$
correspond to the blocks of the finite group scheme $N$. 
When applied to the $n$th relative Frobenius morphism
in characteristic $p$, this yields a refinement of a recent 
result of Sannai and Tanaka (\cite[Thm.~1.2]{ST}, see 
Remark \ref{rem:ST} for details). 

More generally, the category $H_A\tmod$ decomposes
into blocks associated with irreducible representations
of $H_A$, since every extension of two non-isomorphic 
such representations is trivial (see \S \ref{subsec:fun}). 
This translates into a decomposition of the category 
$\HVec_A$ which generalizes (\ref{eqn:idec}); the line bundles 
$L \in \wA(k)$ are replaced with irreducible homogeneous 
vector bundles, parameterized by the orbits of the absolute 
Galois group in $\wA(\bar{k})$ (see \S \ref{subsec:fun}
again). We obtain a description of the block 
$\HVec_{A,x}$ associated with a point $x \in \wA(\bar{k})$, 
which takes a simpler form when the residue field $K := k(x)$ 
is separable over $k$: then 
$\HVec_{A,x} \simeq \UVec_{A_K} = \HVec_{A_K,0}$
(Theorem \ref{thm:unip}).  

In addition to these results, the paper contains developments
in two independent directions. Firstly, we consider the universal 
affine covers of abelian varieties in the setting of quasi-compact 
group schemes (\S \ref{subsec:qc}). These covers are interesting 
objects in their own right; over an algebraically closed field, 
they were introduced and studied by Serre as projective covers 
in the abelian category of commutative proalgebraic groups 
(see \cite[\S 9.2]{Se60}, and \cite[\S 3.3]{Br18} 
for a generalization to an arbitrary ground field). 
They also occur in very recent work of Ayoub on motives 
(see \cite[\S 5.9]{Ayoub}); moreover, the corresponding affine 
fundamental group, i.e., the group $H_A$ in the exact sequence 
(\ref{eqn:iuniv}), coincides with the $S$-fundamental group 
of $A$ as defined in \cite{La11} (see \cite[\S 6]{La12}). 
We show that the group $G_A$ is the projective cover
of $A$ in the abelian category of commutative quasi-compact
group schemes; as a consequence, that category has enough 
projectives (Proposition \ref{prop:proj}). This builds on results 
of Perrin about (not necessarily commutative) quasi-compact 
group schemes, see \cite{Pe75, Pe76}, and follows 
a suggestion of Grothendieck in \cite[IV.8.13.6]{EGA}.
\footnote{``Les seuls pro-groupes alg\'ebriques rencontr\'es en pratique
jusqu'\`a pr\'esent \'etant en fait essentiellement affines, il y aura 
sans aucun doute avantage \`a substituer \`a l'\'etude des groupes
pro-alg\'ebriques g\'en\'eraux (introduits et \'etudi\'es par Serre 
\cite{Se60}) celle des sch\'emas en groupes quasi-compacts sur
$k$, dont la d\'efinition est conceptuellement plus simple.''}

The second development, in Section \ref{sec:rep}
(which can be read independently of the rest of the paper), 
investigates the representation theory of a commutative 
affine group scheme $H$ over a field $k$. This is easy and 
well-known when $k$ is algebraically closed (see e.g. 
\cite[I.3.11]{Jantzen}); the case of a perfect field $k$ follows 
readily by using Galois descent and the splitting 
$H = U \times M$, where $U$ is unipotent and 
$M$ of multiplicative type. But the affine fundamental 
group $H_A$ over an imperfect field admits no such splitting 
(Lemma \ref{lem:H_A}); this motivates our study of a topic 
which seems to have been unexplored.

This representation-theoretic approach displays
remarkable analogies between homogeneous vector
bundles over an abelian variety $A$ and over a full flag
variety $X = G/B$, where $G$ is a split reductive algebraic
group and $B$ a Borel subgroup: $G_A,H_A$ play similar roles 
as $G,B$; the group $\wA(\bar{k})$ is replaced with the
weight lattice, and the Galois group with the Weyl group... 
But these analogies are incomplete, as the combinatorics 
associated with root data of reductive groups have
no clear counterpart on the side of abelian varieties;
also, the block decompositions behave very differently.

\medskip

\noindent
{\bf Notation and conventions}. Throughout this paper, we fix a ground 
field $k$ of characteristic $p \geq 0$. We choose an algebraic closure 
$\bar{k}$ of $k$, and denote by $k_s$ the separable closure of $k$ 
in $\bar{k}$. The Galois group of $k_s/k$ is denoted by $\Gamma$.

We consider schemes over $k$ unless otherwise mentioned; 
morphisms and products of schemes are understood to be
over $k$ as well. Given a scheme $S$ and a field extension $K/k$, 
we denote by $S_K$ the scheme obtained from $S$ by the 
corresponding base change. We freely identify line bundles
(resp.~vector bundles) over a scheme with invertible sheaves
(resp.~locally free sheaves).

A \emph{variety} is a separated, geometrically integral scheme
of finite type. An \emph{algebraic group} is a group scheme 
of finite type. 

We denote by $A$ a nonzero abelian variety, and set 
$g := \dim(A)$; as usual, the group law of $A$ is denoted 
additively, with neutral element $0 \in A(k)$. We use the
notation $n_A$ for the multiplication by an integer $n$ in $A$,
and $A[n]$ for its (scheme-theoretic) kernel. The dual abelian
variety is denoted by $\wA$.

\section{Universal affine covers and homogeneous bundles}
\label{sec:univ}

\subsection{Commutative quasi-compact group schemes}
\label{subsec:qc}

Let $\cC$ be the category with objects the commutative algebraic 
groups, and morphisms the homomorphisms of group schemes; 
then $\cC$ is an abelian category 
(see \cite[VIA.5.4.2]{SGA3}). Denote by $\cL$ the full 
subcategory of $\cC$ with objects the affine (or equivalently, linear) 
algebraic groups; then $\cL$ is an abelian subcategory of $\cC$, 
stable under taking subobjects, quotients and extensions. We say 
that $\cL$ is a \emph{Serre subcategory} of $\cC$. The full 
subcategory $\cF$ of $\cC$ with objects the finite group schemes 
is a Serre subcategory of $\cL$.

Next, consider the category $\tcC$ with objects the commutative
quasi-compact group schemes; the morphisms in $\tcC$ are
still the homomorphisms of group schemes. By \cite[V.3.6]{Pe75},
$\tcC$ is an abelian category. The monomorphisms in $\tcC$
are exactly the closed immersions; the epimorphisms are exactly 
the faithfully flat morphisms (see \cite[V.3.2, V.3.4]{Pe75}).
Also, note that the quotient morphism $G \to G/H$ is a fpqc
torsor for any $G \in \tcC$ and any subgroup scheme $H \subset G$
(see \cite[IV.5.1.7.1]{SGA3}). Using fpqc descent, it follows  
that $\cC$ is a Serre subcategory of $\tcC$.

Every commutative affine group scheme is an object of $\tcC$; 
these objects form a full subcategory $\tcL$, which is again 
a Serre subcategory in view of \cite[VIB.9.2, VIB.11.17]{SGA3}.
Likewise, the profinite group schemes form a Serre subcategory
$\tcF$ of $\tcL$.

By the main theorem of \cite{Pe75}, every quasi-compact group 
scheme $G$ (possibly non-commutative) is the filtered inverse 
limit of algebraic quotient groups $G_i$ with transition 
functions affine for large $i$; equivalently, $G$ is
\emph{essentially affine} in the sense of \cite[IV.8.13.4]{EGA}. 
We now record a slightly stronger version of this result in 
our commutative setting:

\begin{lemma}\label{lem:lift}

Let $G \in \tcC$.

\begin{enumerate}

\item[{\rm (i)}] For any epimorphism $f : G \to H$ in $\tcC$,
where $H$ is affine (resp.~profinite), there exists 
an affine (resp.~profinite) subgroup scheme $H' \subset G$ 
such that the composition $H' \to G \to H$ is an epimorphism.

\item[{\rm (ii)}] There is an exact sequence in $\tcC$
\[ 0 \longrightarrow H \longrightarrow G 
\longrightarrow A \longrightarrow 0, \]
where $H$ is affine and $A$ is an abelian variety.

\item[{\rm (iii)}] $G$ is the filtered inverse limit of
its quotients $G/H$, where $H$ runs over the affine  
subgroup schemes of $G$ such that $G/H$ is algebraic.

\end{enumerate}

\end{lemma}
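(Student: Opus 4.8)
The plan is to treat the three parts in the order (iii), (ii), (i). The starting point is Perrin's approximation theorem \cite{Pe75}: write $G = \varprojlim_{i} G_i$ as a filtered inverse limit of algebraic quotients with transition homomorphisms $p_{ij} : G_j \to G_i$ ($i \le j$), and, after replacing the index set by the cofinal subset of those $i$ beyond which all transition maps are affine, assume every $p_{ij}$ is affine. Let $\pi_i : G \to G_i$ be the canonical maps and set $H_i := \ker(\pi_i)$. The first point I would establish is that each $H_i$ is affine: kernels commute with inverse limits, so $H_i = \varprojlim_{j \ge i} \ker(p_{ij})$; each $\ker(p_{ij})$ is the fibre over the identity of the affine morphism $p_{ij}$, hence affine; the induced transition maps between these fibres are affine; and an inverse limit of affine $k$-schemes along affine transition morphisms is again affine. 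Thus for every $i$ we get a presentation $G_i = G/H_i$ with $H_i$ affine and $G/H_i$ algebraic. To finish (iii) I would check that the $H_i$ are cofinal among all affine subgroup schemes $H \subseteq G$ with $G/H$ algebraic: for such an $H$, the quotient map $q : G \to G/H$ has finitely presented target, so by the limit formalism for schemes (\cite[IV.8]{EGA}) it factors as $q = r \circ \pi_j$ for some $j$; as $\pi_j$ is faithfully flat (an epimorphism in $\tcC$), $r$ is automatically a homomorphism, and $H_j = \ker(\pi_j) \subseteq \ker(q) = H$. Hence $\varprojlim_H G/H = \varprojlim_i G/H_i = G$.

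For (ii) I would fix a single index $i$ and apply to the algebraic group $G_i$ the structure theory of commutative algebraic groups, producing an exact sequence $0 \to N_i \to G_i \to A \to 0$ with $N_i$ affine and $A$ an abelian variety. Over a perfect field this is the Chevalley--Barsotti theorem; over an arbitrary field I would first split off the infinitesimal part of $G_i$ (a Frobenius kernel $G_i[F^n]$, finite hence affine) and the component group $\pi_0(G_i)$ (finite \'etale hence affine) to reduce to $G_i$ smooth and connected, then take $A = \Alb(G_i)$ and observe that $\ker(G_i \to A)$ is affine because over $\kb$ it is contained in the linear part of $(G_i)_{\kb}$ --- the input I would borrow from \cite{Br18}. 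Pulling this sequence back along $\pi_i$ and using that $H_i = \ker(\pi_i)$ is affine, I obtain an exact sequence $0 \to M \to G \to A \to 0$ in $\tcC$, with $M := \pi_i^{-1}(N_i)$ an extension of the affine group $N_i$ by the affine group $H_i$, hence affine since $\tcL$ is a Serre subcategory of $\tcC$.

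Part (i), in the affine case, is then essentially a corollary of (ii). Given an epimorphism $f : G \to H$ with $H$ affine, take the sequence $0 \to M \to G \to A \to 0$ just constructed. The scheme-theoretic image $f(M) \subseteq H$ is a closed subgroup scheme, and the quotient $H/f(M)$ is at once a quotient of the affine group $H$ and, via $G \xrightarrow{f} H \to H/f(M)$ (which annihilates $M$ and hence factors through $G/M = A$), a quotient of the abelian variety $A$; a group scheme that is simultaneously affine and a quotient of an abelian variety is trivial, so $f(M) = H$ and $H' := M$ does the job. When $H$ is only profinite, $M \to H$ is still an epimorphism, so I would be reduced to the same assertion for the affine group $M$ and its profinite quotient $H$; this residual case I would settle by a routine argument on finite group schemes --- exhausting the infinitesimal part of $M$ by Frobenius kernels and the \'etale part by finite subgroups of the relevant component groups --- or by invoking Perrin's results directly.

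The genuinely delicate steps, as opposed to the purely formal manipulations in the abelian category $\tcC$, are two. First, the affineness of the kernels $H_i$ in (iii): this is exactly where the affineness of Perrin's transition maps is indispensable, and it is what underlies the reductions in both (ii) and (i). Second, over an imperfect ground field, the existence of an honest abelian variety quotient of $G_i$ with affine kernel: the naive candidate $G_i/(G_i)_{\aff}$ is in general only a pseudo-abelian variety, so the quotient must be produced directly as an Albanese morphism, and checking that its kernel descends to an affine $k$-subgroup scheme is the point that needs care --- and the reason for appealing to \cite{Br18}.
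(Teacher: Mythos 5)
Your architecture is genuinely different from the paper's. The paper proves the parts in the order (i), (ii), (iii): part (i) is obtained from the lifting properties for the pairs $(\cC,\cL)$ and $(\cC,\cF)$ (\cite[Lem.~3.1]{Br18} and \cite[Thm.~1.1]{Br15}), part (ii) applies \cite[V.4.3.1]{Pe75} to the neutral component $G^0$ and then uses the \emph{profinite} case of (i) to lift the pro-\'etale quotient $G/G^0$, and part (iii) is deduced from (ii). You instead establish (iii) directly from Perrin's approximation theorem (affineness of the kernels $H_i$ plus cofinality), get (ii) by descending to a single algebraic quotient $G_i$, and derive the affine case of (i) from (ii) via the observation that an affine quotient of an abelian variety is trivial. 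That last step is correct and arguably cleaner than the paper's: the single subgroup $M$ from (ii) dominates \emph{every} affine quotient of $G$ at once. Two caveats on (ii): you cannot literally ``split off'' $\pi_0(G_i)$ from $G_i$; what you need is the (easy, but necessary) supplementary fact that an extension of a finite group scheme by an abelian variety is again an extension of an abelian variety by a finite group scheme (take the kernel of multiplication by the order of the finite quotient). And the existence of an affine subgroup of $G_i^0$ with abelian variety quotient over an imperfect field is exactly \cite[V.4.3.1]{Pe75} (or \cite{Br18}); your Albanese sketch is shakier than a direct citation, but the conclusion is right.

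The genuine gap is the profinite case of (i). It does not follow from (ii), and it is not a ``routine argument on finite group schemes'': it is precisely the lifting property for finite quotients of algebraic groups over an arbitrary field, which is the main theorem of \cite{Br15} (see also \cite[Thm.~3.2]{Lucchini}) and is what the paper invokes. Your proposed exhaustion fails as stated: for a quotient $q: M \to I$ with $I$ infinitesimal killed by $\rF^n$, the image $q(\Ker \rF^n_M)$ need not be all of $I$ (already $\alpha_{p^2} \to \alpha_{p^2}/\alpha_p \simeq \alpha_p$ sends $\Ker(\rF) = \alpha_p$ to zero), so one must pass to higher Frobenius kernels and actually prove something; and over an imperfect field neither the multiplicative/unipotent splitting of $M$ nor the connected--\'etale splitting of the finite quotients is available, which is why this is a theorem rather than an exercise. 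Moreover, even granting the finite-level lifting, producing a single \emph{profinite} subgroup of $G$ surjecting onto $H = \lim_{\leftarrow} H_n$ requires an additional pro-object (compatibility/compactness) argument. To repair your proof with minimal change, replace the sketch by a citation of \cite[Thm.~1.1]{Br15} applied to the algebraic quotients of $M$, followed by the passage to inverse limits.
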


\begin{proof}
(i) We may choose an affine subgroup scheme $L \subset G$
such that $G/L$ is algebraic. Then $f$ induces an epimorphism
$G/L \to H/f(L)$, and $H/f(L)$ is affine. By the lifting
property for the pair $(\cC,\cL)$ (see \cite[Lem.~3.1]{Br18}), 
there exists an affine subgroup scheme $K \subset G/L$
such that the composition $K \to G/L \to H/f(L)$ is an
epimorphism. Consider the preimage $H'$ of $K$ in $G$.
Then $H'$ is affine (as an extension of $K$ by $L$) 
and the composition $H' \to G \to H$ is an epimorphism.
This shows the assertion for affine quotients. 
The assertion for profinite quotients follows similarly from
the lifting property for the pair $(\cC,\cF)$, obtained in 
\cite[Thm.~1.1]{Br15} (see also \cite[Thm.~3.2]{Lucchini}).

(ii) Consider the neutral component $G^0$ of $G$; this is
a connected subgroup scheme and the quotient $G/G^0$ is
pro-\'etale (see \cite[V.4.1]{Pe75}). Moreover, by 
\cite[V.4.3.1]{Pe75}, there is an exact sequence
in $\tcC$
\[ 0 \longrightarrow H' \longrightarrow G^0 
\longrightarrow A \longrightarrow 0, \]
where $H'$ is affine and $A$ is an abelian variety.
In view of (i), there is a profinite subgroup scheme
$H'' \subset G$ such that the composition $H'' \to G \to G/G^0$
is an epimorphism. Then $H:= H' + H''$ is an affine subgroup
scheme of $G$, and $G/H$ is a quotient of $A$, hence an
abelian variety.

(iii) Let $H$ be as in (ii). Then the subgroup schemes
$H' \subset H$ such that $H/H'$ is algebraic form a filtered
inverse system, and their intersection is trivial. Moreover,
for any such subgroup scheme $H'$, the quotient $G/H'$ is algebraic
(as an extension of $A$ by $H/H'$). This yields the assertion
in view of \cite[II.3.1.1]{Pe75}.  
\end{proof}

In particular, every commutative quasi-compact group scheme 
$G$ is a filtered inverse limit of algebraic quotient groups with 
affine transition morphisms.

We now consider the \emph{pro category} $\Pro(\cC)$: its objects
(the \emph{pro-algebraic groups}) are the filtered inverse systems 
of objects of $\cC$, and the morphisms are defined by 
\[ \Hom_{\Pro(\cC)}
( \lim_{\leftarrow} G_i, \lim_{\leftarrow} H_j) 
:= \lim_{\leftarrow, j} \lim_{\rightarrow, i} 
\Hom_{\cC}(G_i,H_j). \]
Recall that $\Pro(\cC)$ is an abelian category having
enough projectives; moreover, the natural functor
\[ F : \cC \longrightarrow \Pro(\cC) \] 
yields an equivalence of $\cC$ with the Serre subcategory 
$\cC'$ of $\Pro(\cC)$ consisting of artinian objects (see e.g. 
\cite[I.4]{Oort} for these facts). 
Also, recall that for any object $X$ of $\Pro(\cC)$, 
the artinian quotients $X_i$ of $X$ form a filtered inverse 
system, and the resulting map $X \to \lim_{\leftarrow} X_i$ 
is an isomorphism (see e.g. \cite[V.2.2]{DG}). 
As a consequence, the restriction of $F$ to $\cL$ extends
to an equivalence of categories 
$\tcL \stackrel{\simeq}{\longrightarrow} \Pro(\cL)$; 
also, note that $\Pro(\cL)$ is a Serre subcategory of $\Pro(\cC)$.

Denote by $\tcC'$ the full subcategory of $\Pro(\cC)$ consisting
of those objects $X$ such that there exists an exact sequence
in $\Pro(\cC)$
\begin{equation}\label{eqn:exte} 
0 \longrightarrow Y \longrightarrow X \longrightarrow A
\longrightarrow 0, 
\end{equation}
where $Y \in \Pro(\cL)$ and $A$ is (the image under $F$ of)
an abelian variety.

\begin{lemma}\label{lem:equiv}

\begin{enumerate}

\item[{\rm (i)}] The objects of $\tcC'$ are exactly the
pro-algebraic groups that are isomorphic to essentially
affine objects.

\item[{\rm (ii)}]  
$\tcC'$ is a Serre subcatgory of $\Pro(\cC)$.

\item[{\rm (iii)}] 
Sending each $G \in \tcC$ to the filtered inverse system of 
its algebraic quotients extends to an exact functor 
$S: \tcC \to \tcC'$, which is an equivalence of categories. 

\end{enumerate}

\end{lemma}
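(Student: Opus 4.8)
The plan is to deduce all three parts from a careful analysis of the functor $S$ of (iii): I will construct it, prove it fully faithful and exact, identify its essential image with $\tcC'$ — which yields (i) along the way — and then read off (ii). For $G \in \tcC$, the subgroup schemes $K \subset G$ with $G/K$ algebraic form a filtered system under reverse inclusion, since $G/(K_1 \cap K_2)$ embeds into $G/K_1 \times G/K_2$ and is therefore algebraic; by Lemma \ref{lem:lift}(iii) the inverse system $(G/K)_K$ has limit $G$ in $\tcC$, and one sets $S(G) := (G/K)_K \in \Pro(\cC)$. Given a morphism $f : G_1 \to G_2$ in $\tcC$, each composite $G_1 \to G_2 \to G_2/K_2$ is a morphism to an algebraic group, and since $G_1$ is a filtered inverse limit of algebraic groups with affine transition morphisms (Lemma \ref{lem:lift} and the remark following it), hence an honest limit of schemes, this composite factors through some $G_1/K_1$, uniquely up to shrinking $K_1$ (\cite[IV.8]{EGA}). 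This defines $S(f)$ and makes $S$ a functor.

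Full faithfulness follows by combining the same factorization principle with the formula for $\Hom$ in $\Pro(\cC)$: one obtains $\lim_{\rightarrow, K_1} \Hom_{\cC}(G_1/K_1, G_2/K_2) = \Hom_{\tcC}(G_1, G_2/K_2)$, and then $\lim_{\leftarrow, K_2} \Hom_{\tcC}(G_1, G_2/K_2) = \Hom_{\tcC}(G_1, G_2)$ since $G_2 = \lim_{\leftarrow} G_2/K_2$ in $\tcC$, and one checks the resulting bijection is induced by $S$. For exactness, let $0 \to G' \to G \to G'' \to 0$ be exact in $\tcC$. For each affine $K \subset G$ with $G/K$ algebraic there is an exact sequence of algebraic groups $0 \to G'/(G' \cap K) \to G/K \to G''/\bar K \to 0$, where $\bar K$ is the image of $K$ in $G''$; all transition morphisms occurring have affine kernels, hence are affine. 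Using Lemma \ref{lem:lift}(ii) — applied in particular to the kernels of algebraic quotients, which shows every algebraic quotient of a group in $\tcC$ is dominated by one with affine kernel — one checks that such $K$ are cofinal and that $\{G'/(G' \cap K)\}$ and $\{G''/\bar K\}$ are cofinal among the algebraic quotients of $G'$ and $G''$. As filtered inverse limits are exact in $\Pro(\cC)$, passing to the limit yields the exact sequence $0 \to S(G') \to S(G) \to S(G'') \to 0$.

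Applying exactness to the sequence $0 \to H \to G \to A \to 0$ of Lemma \ref{lem:lift}(ii) shows $S(G) \in \tcC'$, since $S(H) \in \Pro(\cL)$ (the algebraic quotients of the affine group $H$ lie in $\cL$) and $S(A) \cong F(A)$. Conversely, every $X \in \tcC'$ is essentially affine: writing $0 \to Y \to X \to A \to 0$ with $Y \in \Pro(\cL)$, one expresses $Y = \lim_{\leftarrow, j} Y_j$ with the $Y_j$ algebraic and affine, so that the $X/\ker(Y \to Y_j)$ are algebraic, have affine transition morphisms, and are cofinal among the artinian quotients of $X$, whence $X \cong \lim_{\leftarrow, j} X/\ker(Y \to Y_j)$. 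And if $X \cong \lim_{\leftarrow, i} X_i$ is essentially affine, the honest inverse limit $G := \lim_{\leftarrow, i} X_i$ exists as a scheme (\cite[IV.8.2.3]{EGA}), lies in $\tcC$, and has the $X_i$ cofinal among its algebraic quotients, so $X \cong S(G)$. This proves (i) and shows that the essential image of $S$ is exactly $\tcC'$; together with full faithfulness and exactness, $S : \tcC \to \tcC'$ is therefore an exact equivalence of categories, which is (iii).

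For (ii) one must check that $\tcC'$ is stable under subobjects, quotients and extensions in $\Pro(\cC)$. For a subobject $X' \subset X$ with $X \cong \lim_{\leftarrow, i} X_i$ essentially affine, the images $X'_i$ of $X'$ in $X_i$ form an inverse system of algebraic groups with affine transition morphisms and $X' \cong \lim_{\leftarrow, i} X'_i$ (using exactness of filtered inverse limits), so $X' \in \tcC'$ by (i). For a quotient $X \twoheadrightarrow X''$ with $X \cong S(G)$, its kernel lies in $\tcC'$ by the previous case, hence equals $S(K)$ for a subgroup scheme $K \subset G$ (full faithfulness of $S$), and then $X'' \cong S(G/K) \in \tcC'$ by exactness. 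Finally, for an extension $0 \to X' \to X \to X'' \to 0$ with $X', X'' \in \tcC'$, writing $X'' \cong \lim_{\leftarrow, i} X''_i$ and $X' \cong \lim_{\leftarrow, j} X'_j$ with $X''_i, X'_j$ algebraic and affine transition morphisms, pulling the extension back along $X''_i \to X''$ and then pushing out along $X' \to X'_j$ exhibits $X$ as $\lim_{\leftarrow, (i,j)} X_{ij}$ with the $X_{ij}$ algebraic and affine transition morphisms, so $X \in \tcC'$. The formal part (functoriality and full faithfulness of $S$) is routine; the real work is the cofinality bookkeeping — verifying that the systems of algebraic quotients introduced above are genuinely cofinal with affine transition morphisms, which leans repeatedly on Lemma \ref{lem:lift} — and, within (ii), I expect closure under extensions to be the most delicate point, since it requires resolving the sub- and the quotient-object into algebraic pieces simultaneously while keeping all transition morphisms affine.
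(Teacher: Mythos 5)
Most of your proposal is a legitimate, more hands-on unwinding of the paper's argument: your construction of $S$, the double-limit computation of $\Hom$ giving full faithfulness, and the identification of the essential image with $\tcC'$ are essentially what the paper obtains by citing the equivalence $L$ of \cite[IV.8.13.5, IV.8.13.6]{EGA} between essentially affine pro-objects and $\tcC$ (with $L\circ S\simeq\id$), and your cofinality argument for exactness is a workable substitute for the paper's reduction to preservation of monomorphisms and epimorphisms. The genuine gap is in the stability of $\tcC'$ under extensions. First, ``pulling the extension back along $X''_i \to X''$'' is ill-posed: $X''_i$ is a \emph{quotient} of $X''$, so the structure map goes $X'' \to X''_i$ and there is nothing to pull back along. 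More seriously, the intended conclusion cannot be rescued by reinterpretation: a quotient of $X$ that is an extension of $X''_i$ by $X'_j$ is a quotient $X/N$ with $N\cap X' = \Ker(X'\to X'_j)$ and $N$ mapping onto $L_i := \Ker(X''\to X''_i)$, and producing such an $N$ amounts to splitting the induced extension of $L_i$ by $X'_j$. Such splittings need not exist. For a concrete obstruction, take $X'=A$ an abelian variety, $X''=\bZ_p=\lim_{\leftarrow}\bZ/p^n\bZ$, and $X$ a compatible system of nontrivial classes in $\Ext^1(\bZ/p^n\bZ,A)\simeq A(k)/p^nA(k)$ whose restrictions to $p^i\bZ_p$ remain nontrivial for all $i$: then $X$ has no algebraic quotient that is an extension of $\bZ/p^i\bZ$ by $A$, so no system $\{X_{ij}\}$ of the kind you describe exists, even though $X$ does lie in $\tcC'$.

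What actually closes this gap is the lifting property for the pair $(\Pro(\cC),\Pro(\cL))$ (\cite[Cor.~2.12]{Br18}), which the paper invokes: after quotienting by the pro-affine part of $X'$ and restricting over the pro-affine part of $X''$, one reduces to an extension $0\to A\to Z\to Y\to 0$ with $A$ an abelian variety and $Y\in\Pro(\cL)$, and the lifting property supplies a subobject $W\subset Z$ in $\Pro(\cL)$ surjecting onto $Y$; then $Z/W$ is a quotient of $A$, hence an abelian variety, exhibiting $Z\in\tcC'$ (and one reassembles $X$ from there). This is a substantive external input, not bookkeeping, and it is exactly the step your sketch omits. You correctly anticipated that extensions would be the delicate point, but the pullback--pushout device you propose does not resolve it; you need to cite or reprove the lifting property.
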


\begin{proof}
(i) Let $X \in \Pro(\cC)$ be an essentially affine object.
Then there is an exact sequence in $\Pro(\cC)$
\[ 0 \longrightarrow Y \longrightarrow X 
\longrightarrow Z \longrightarrow 0, \]
where $Y$ is affine and $Z$ is algebraic. Thus, $Z$
is an extension of an abelian variety by an affine group
scheme. Hence so is $X$; it follows that $X$ is an object 
of $\tcC'$.

Conversely, every object of $\Pro(\cC)$ that lies in
an extension (\ref{eqn:exte}) is isomorphic to the
essentially affine object consisting of the inverse system
of quotients $X/Y'$, where $Y'$ is a subobject of $Y$.

(ii) Let $X \in \tcC'$ and consider a subobject $X'$
of $X$ in $\Pro(\cC)$. Then we have a commutative diagram
with exact rows
\[ \xymatrix{
0 \ar[r] & Y' \ar[r] \ar[d] & X' \ar[r] \ar[d] & A'  
\ar[r] \ar[d] & 0 \\
0 \ar[r] & Y \ar[r]  & X  \ar[r] & A \ar[r] & 0, \\
} \]
where the vertical arrows are monomorphisms. It follows that
$Y' \in \Pro(\cL)$, and $A'$ is an extension of an abelian
variety by a finite group scheme (Lemma \ref{lem:lift}).
As a consequence, $X' \in \tcC'$. Moreover, $X/X'$ is an extension
of $A/A'$ by $Y/Y'$, and hence is an object of $\tcC'$ as well.
So $\tcC'$ is stable by subobjects and quotients.
To show the stability by extensions, it suffices to check
that $\tcC'$ contains all objects $X$ of $\Pro(\cC)$ which lie 
in an exact sequence
\[ 0 \longrightarrow A \longrightarrow X
\longrightarrow Y \longrightarrow 0, \]
where $A$ is an abelian variety and $Y \in \Pro(\cL)$. But this
follows from the fact that the pair $(\Pro(\cC),\Pro(\cL))$ 
satisfies the lifting property (see \cite[Cor.~2.12]{Br18}).

(iii) Note that $S(G)$ is essentially affine for any
$G \in \tcC$. Next, we define $S(f)$ for any morphism
$f : G \to H$ in $\tcC$. If $H \in \cC$, then $f$ factors
through a morphism $f' : G' \to H$ for any sufficiently 
large algebraic quotient $G'$ of $G$; we then define
$S(f)$ as the image of $f'$ in 
\[ \lim_{\rightarrow} \Hom_{\cC}(G',H) = 
\Hom_{\Pro(\cC)}(S(G),S(H)). \] 
This extends to an arbitrary $H \in \tcC$ by using
the equality
\[ \Hom_{\Pro(\cC)}(G, \lim_{\leftarrow} H') = 
\lim_{\leftarrow} \Hom_{\Pro(\cC)}(G,H'). \]

To show that $S$ satisfies our assertions, recall
from \cite[IV.8.2.3]{EGA} that for any essentially affine
object $X = (X_i)$ of $\Pro(\cC)$, the inverse limit of the 
$X_i$ exists in $\tcC$; denote this quasi-compact group scheme 
by $L(X)$. Moreover, $L$ extends to a equivalence from the
full subcategory of $\Pro(\cC)$ consisting of the essentially
affine objects, to the category $\tcC$ (see 
\cite[IV.8.13.5, IV.8.13.6]{EGA}). By construction,
$L \circ S$ is isomorphic to the identity functor of
$\tcC$. This yields the desired statement, except
for the exactness of $S$. By \cite[IV.8.13.6]{EGA} again, 
$L$ commutes with products; equivalently, $L$ is additive.
It follows that $S$ is additive as well. To show that it
is exact, it suffices to check that $S$ preserves kernels
and cokernels. In turn, it suffices to show that $S$
preserves monomorphisms and epimorphisms. But this follows
readily from the stability of $\tcC'$ under subobjects
and quotients.
\end{proof}

\begin{proposition}\label{prop:proj}

\begin{enumerate}

\item[{\rm (i)}] For any object $G$ of $\tcC'$, 
the projective cover of $G$ in $\Pro(\cC)$ is an object 
of $\tcC'$ as well.

\item[{\rm (ii)}] Every indecomposable projective of
$\Pro(\cC)$ is an object of $\tcC'$.

\item[{\rm (iii)}] $\tcC$ has enough projectives.

\end{enumerate}

\end{proposition}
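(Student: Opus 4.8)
The plan is to reduce (ii) and (iii) to (i), and then to prove (i) by realising the projective cover as an inverse limit of projective covers of algebraic groups and showing that each of these is essentially affine.

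First I would recall the standing facts about $\Pro(\cC)$: it has enough projectives, every object has a projective cover, every projective is a product of indecomposable projectives, and (by Serre over $\bar{k}$, see \cite{Se60}, and by \cite{Br18} in general) every indecomposable projective is the projective cover of an object of $\cC$, which we identify with the Serre subcategory $\cC'$ of artinian objects. As $\tcC'$ is a Serre subcategory of $\Pro(\cC)$ (Lemma \ref{lem:equiv}(ii)), it is stable under subobjects, quotients, extensions and direct summands, and the $\Ext$-groups of $\Pro(\cC)$ and of $\tcC'$ agree on objects of $\tcC'$; hence a projective object of $\Pro(\cC)$ that lies in $\tcC'$ is projective in $\tcC'$, and is a projective cover there exactly when it is one in $\Pro(\cC)$. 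Granting (i): the projective cover in $\Pro(\cC)$ of any $G\in\tcC'$ then lies in $\tcC'$, so it is a projective cover in $\tcC'$; with the equivalence $S:\tcC\xrightarrow{\sim}\tcC'$ of Lemma \ref{lem:equiv}(iii) this proves (iii), and (ii) is (i) applied to an object of $\cC\subseteq\tcC'$.

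For (i), let $G\in\tcC'$ with projective cover $\pi:P\to G$ in $\Pro(\cC)$. By Lemma \ref{lem:equiv}(iii) and Lemma \ref{lem:lift}(iii), $G=\lim_{\leftarrow}G_j$ is the filtered inverse limit of its algebraic quotients, with affine transition morphisms for $j$ large. Lifting the composites $P\to G\to G_j$ through the projective covers $P_j\to G_j$ in $\Pro(\cC)$ (and arranging the transitions of the $P_j$ to be epimorphisms) gives $P\cong\lim_{\leftarrow}P_j$. So it suffices to show (a) that the projective cover in $\Pro(\cC)$ of an algebraic group lies in $\tcC'$, and (b) that the transition $P_{j'}\to P_j$ is affine for $j$ large; for then $\lim_{\leftarrow}P_j$ is essentially affine and lies in $\tcC'$ by Lemma \ref{lem:equiv}(i). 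For (a), given an algebraic group $T$ I would use $0\to H\to T\to A\to 0$ with $H$ affine algebraic and $A$ an abelian variety (Lemma \ref{lem:lift}(ii)) and combine the projective covers of $H$ and $A$ into an epimorphism $P_H\oplus P_A\to T$ from a projective, so that $P_T$ is a direct summand of $P_H\oplus P_A$ and it suffices to treat $H$ and $A$ separately. In both cases I pass to the algebraic quotients $P_i$ of the projective cover, which for $i$ large are superfluous covers of $H$, resp.\ $A$, by algebraic groups. When $T=H$ is affine: since $\Hom(\text{abelian variety},\text{affine group})=0$ and the kernel of the cover is superfluous, each $P_i$ has trivial abelian-variety quotient, hence is affine; as morphisms between affine group schemes are affine, $P_H=\lim_{\leftarrow}P_i\in\tcL\subseteq\tcC'$. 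When $T=A$: each $P_i$ has no simple quotient (as $A$ has none and the cover has superfluous kernel), hence has trivial torus part; writing $\ker(P_i\to A)$ as an extension of a subgroup of an abelian variety by a unipotent group, superfluousness together with Poincar\'e complete reducibility forces that subgroup to be finite, so $\ker(P_i\to A)$ is affine; then $\ker(P_{i'}\to P_i)\subseteq\ker(P_{i'}\to A)$ is affine, the transitions are affine, and $P_A=\lim_{\leftarrow}P_i$ is essentially affine. Part (b) follows by the same device, since $\ker(P_{j'}\to P_j)\subseteq\ker(P_{j'}\to G_j)$ is an extension of the affine group $\ker(G_{j'}\to G_j)$ by the superfluous kernel of the cover $P_{j'}\to G_{j'}$, which is affine by the argument for (a).

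The \textbf{main obstacle} is part (a), and within it the abelian-variety case: understanding the superfluous kernel of the projective cover of an abelian variety and proving it affine. The decisive input there is Poincar\'e complete reducibility, which prevents a superfluous subobject of an abelian variety from containing a positive-dimensional abelian subvariety; this makes the algebraic quotients of the projective cover isogeny-bounded over $A$, so that the transition maps are affine and the limit essentially affine. The Perrin-type structure theory enters through Lemma \ref{lem:lift}, which supplies both the inverse-limit description of objects of $\tcC$ and the exact sequence $0\to H\to T\to A\to 0$ needed to split (a) into its two pure cases.
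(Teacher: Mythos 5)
Your proof is correct, and its skeleton coincides with the paper's: (ii) reduces to (i) via the fact that every indecomposable projective of $\Pro(\cC)$ is the projective cover of an algebraic group, (iii) follows from (i) together with the equivalence of Lemma \ref{lem:equiv}, and (i) rests on splitting $P(G)$ off from $P(H)\oplus P(A)$ for an extension of an abelian variety $A$ by an affine group $H$. The genuine difference lies in how the two halves of (i) are justified: the paper merely records that $P(H)$ is affine and cites \cite[Prop.~3.3]{Br18} for the statement that $P(A)$ is an extension of $A$ by an affine group scheme, whereas you reprove these facts from scratch. Your mechanism for the abelian-variety case --- the algebraic quotients $P_i$ of $P(A)$ are superfluous covers of $A$, and Poincar\'e complete reducibility turns any positive-dimensional abelian subvariety of $\ker(P_i\to A)$ into a proper subobject of $P_i$ that still surjects onto $A$ --- is exactly the right one and does show that $\ker(P_i\to A)$ is affine, hence that the transition maps are affine and $P(A)$ is essentially affine. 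Two small repairs. First, $\ker(P_i\to A)$ is an extension of a subgroup scheme of an abelian variety by an \emph{affine} group, not a unipotent one (the affine part of an anti-affine group contains a torus in general); the Poincar\'e argument is unaffected. Second, the preliminary reduction of a general $G\in\tcC'$ to $P\cong\lim_{\leftarrow}P_j$ is redundant, and would anyway need care since projective covers do not obviously commute with filtered inverse limits: because $\tcC'$ is stable under direct summands, it suffices to apply your direct-summand trick once to the extension $0\to H\to G\to A\to 0$ exhibiting $G$ as an object of $\tcC'$, with $H$ pro-affine, noting that $P(H)$ is still pro-affine since every simple quotient of $H$ is affine.
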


\begin{proof}
(i) View $G$ as an extension of an abelian variety
$A$ by an affine group scheme $H$. This readily yields 
an isomorphism of projective covers 
$P(G) \simeq P(H) \oplus P(A)$ with an obvious
notation. Moreover, $P(H)$ is affine, and $P(A)$
is an extension of $A$ by an affine group scheme
(see \cite[Prop.~3.3]{Br18}). Thus, $P(G) \in \tcC'$.

(ii) This follows from the fact that every indecomposable 
projective object of $\Pro(\cC)$ is the projective cover 
of an algebraic group (see e.g. \cite[V.2.4.3]{DG}).

(iii) By (i), the abelian category $\tcC'$ has enough 
projectives. Hence so does $\tcC$ in view of Lemma 
\ref{lem:equiv}.
\end{proof}

\subsection{Universal affine covers of abelian varieties}
\label{subsec:univ}

Let $A$ be an abelian variety. By Proposition \ref{prop:proj}, 
$A$ has a projective cover $G_A$ in $\tcC$.
The resulting exact sequence,
\begin{equation}\label{eqn:G_A} 
0 \longrightarrow H_A \longrightarrow G_A
\stackrel{f_A}{\longrightarrow} A \longrightarrow 0, 
\end{equation} 
is the universal extension of $A$ by an affine group scheme, 
as follows from \cite[\S 3.3]{Br18} combined with 
Lemma \ref{lem:equiv}. More specifically, for any commutative
affine group scheme $H$, there is an isomorphism
\begin{equation}\label{eqn:homext} 
\Hom_{\tcL}(H_A,H) \stackrel{\simeq}{\longrightarrow}
\Ext^1_{\tcC}(A,H) 
\end{equation}
given by pushout of the extension (\ref{eqn:G_A}). 
Also, by Lemma \ref{lem:equiv} again and 
\cite[\S \S 2.3, 3.4]{Br18}, the projective objects of
$\tcC$ are exactly the products $P \times G_B$, where 
$P$ is a projective object of $\tcL$ and $B$ is an 
abelian variety.

Note that $f_A : G_A\to A$ is the filtered inverse limit of 
all \emph{anti-affine extensions} of $A$, i.e., of all algebraic groups 
$G$ equipped with an epimorphism $f : G \to A$ such that $\Ker(f)$
is affine and $\cO(G) = k$ (see \cite[Lem.~2.14]{Br18}). 
In particular, $\cO(G_A) = k$ as well. Using \cite[4.2.2]{Pe75},
it follows that $G_A$ is geometrically integral.

Next, we show that (\ref{eqn:G_A}) is the universal
extension of $A$ by a (possibly non-commutative)
affine group scheme:

\begin{theorem}\label{thm:univ}
Let $G$ be a quasi-compact group scheme, and $f: G \to A$
a faithfully flat morphism of group schemes with affine kernel 
$H$. Then there exist unique morphisms of group schemes 
$\varphi: G_A \to G$, $\psi : H_A \to H$ such that 
we have a commutative diagram with exact rows
\[ \xymatrix{
0 \ar[r] & H_A \ar[r] \ar[d]^{\psi} & G_A 
\ar[r]^-{f_A} \ar[d]^{\varphi} & A  \ar[r] \ar[d]^{\id} & 0 \\
1 \ar[r] & H \ar[r]  & G  \ar[r]^-{f} & A \ar[r] & 1. \\
} \]
Moreover, $\varphi$ factors through the center of 
the neutral component $G^0$. 
\end{theorem}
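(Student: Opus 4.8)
The plan is to reduce everything to the universal property of $G_A$ as a projective cover, using the structural results already established. First I would replace $G$ by the preimage of the abelian variety inside it: by Lemma \ref{lem:lift}(ii), $G$ sits in an exact sequence $0 \to H' \to G \to A' \to 0$ with $H'$ affine and $A'$ an abelian variety; the composition $G \to A' $ together with $f : G \to A$ show that $A$ is a quotient of $G$, hence also of $A'$, so $A'$ itself is an abelian variety surjecting onto $A$ — but more useful is to work inside $G$ directly. The key point is that $G$, being quasi-compact, is an object of $\tcC$ once we know it is commutative near the relevant part; since the theorem allows $G$ non-commutative, the honest approach is to pass to the neutral component $G^0$, which by Lemma \ref{lem:lift}(ii) applied in the (possibly noncommutative) setting of \cite{Pe75} is an extension of an abelian variety by an affine group scheme, and then observe that the image of $f|_{G^0}$ is all of $A$ since $A$ is connected.

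Next I would construct $\varphi$. Consider the pullback of $G \to A$ (or rather $G^0 \to A$) along $f_A : G_A \to A$; call it $\widetilde G = G_A \times_A G^0$. This is a quasi-compact group scheme sitting in $0 \to H \to \widetilde G \to G_A \to 0$, an extension of $G_A$ by the affine group scheme $H$. Now here is where projectivity enters: I want a section $G_A \to \widetilde G$ of this extension, equivalently a splitting as group schemes. Since $G_A$ is the projective cover of $A$ in $\tcC$, and $\widetilde G \to G_A$ is an epimorphism with affine (hence, as an object of $\tcC$, admissible) kernel, we would like $\Ext^1_{\tcC}(G_A, H) = 0$ for affine $H$ — this follows because $G_A$ is projective in $\tcC$ (Proposition \ref{prop:proj} together with the description of projectives after \eqref{eqn:homext}, since $G_A$ is literally one of the projective objects $P \times G_B$ with $P = 1$, $B = A$). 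A splitting $s : G_A \to \widetilde G$ then gives $\varphi = (\text{second projection}) \circ s : G_A \to G^0 \subset G$, which by construction satisfies $f \circ \varphi = f_A$. Restricting $\varphi$ to $H_A = \Ker(f_A)$ lands in $\Ker(f) \cap G^0 = H \cap G^0$, which is an open subgroup of the affine group $H$; composing with the inclusion gives $\psi : H_A \to H$, and the diagram commutes.

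For uniqueness: if $(\varphi_1,\psi_1)$ and $(\varphi_2,\psi_2)$ both make the diagram commute, then $\varphi_1 \varphi_2^{-1}$ (using that $G_A$ is commutative, so this difference is a homomorphism) maps $G_A$ into $\Ker(f) = H$ and kills $H_A$, hence factors through a homomorphism $A \to H$; but $A$ is an abelian variety and $H$ is affine, so $\Hom(A,H) = 0$ and $\varphi_1 = \varphi_2$, whence $\psi_1 = \psi_2$ as well. Finally, for the centrality statement: $\varphi(G_A)$ is a commutative subgroup scheme of $G^0$ (image of the commutative, geometrically integral $G_A$), and it surjects onto $A$ via $f$; its conjugates by elements of $G^0$ give other such subgroups, and I would argue that the conjugation action of $G^0$ on $\varphi(G_A)$ is trivial by combining uniqueness with the rigidity of homomorphisms from $G_A$ — more precisely, for each $g \in G^0(\kb)$, conjugation by $g$ is an automorphism of the extension $1 \to H\cap G^0 \to G^0 \to A \to 1$ fixing $A$, so it sends $\varphi$ to another lift of $\id_A$; but any two such lifts of $\varphi$ along $f$ differ by an element of $\Hom(G_A, H\cap G^0) = \Hom(A, H\cap G^0) = 0$ after passing to the quotient by $H_A$, hence $g\varphi(x)g^{-1}$ and $\varphi(x)$ differ by a fixed element, and evaluating at $x = 0$ shows they are equal. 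I expect the main obstacle to be the bookkeeping around non-commutativity of $G$: one must be careful that $\widetilde G$ is genuinely an object of $\tcC$ (or argue within Perrin's noncommutative framework) and that the various intersections with $G^0$ are well-behaved; the abelian-variety-to-affine vanishing $\Hom(A, H) = 0$ and the projectivity of $G_A$ do all the real work, and those are already in hand.
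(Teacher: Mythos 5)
There is a genuine gap, and it is exactly at the point you flagged as ``bookkeeping'': the non-commutativity of $G$ is not a side issue but the crux. Your construction of $\varphi$ splits the pullback extension $0 \to H\cap G^0 \to \widetilde G \to G_A \to 0$ by invoking projectivity of $G_A$ in $\tcC$. But $\tcC$ is the category of \emph{commutative} quasi-compact group schemes, and $\widetilde G = G_A \times_A G^0$ surjects onto $G^0$, so it is non-commutative whenever $G^0$ is (e.g.\ $G = \GL_n \times A$ with $f$ the projection, which is precisely the kind of $G$ the theorem is later applied to, since $H_E$ is a non-commutative automorphism group). Hence $\widetilde G$ is not an object of $\tcC$, projectivity of $G_A$ in $\tcC$ says nothing about this extension, and no fix is offered. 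The missing idea is Perrin's affinization theorem: $G$ has a largest normal subgroup scheme $N$ with $G/N$ affine, and $N$ is geometrically integral and \emph{contained in the center of $G^0$} -- in particular commutative, hence an object of $\tcC$. One then shows $f(N) = A$ (the quotient $A/f(N)$ is a quotient of the affine group $G/N$ and an abelian variety, hence trivial), so $f$ restricts to an epimorphism $N \to A$ in $\tcC$ with affine kernel, and the universal property/projectivity of $G_A$ applies to \emph{this} commutative extension. This also delivers the centrality statement for free, whereas your conjugation argument only treats geometric points (insufficient if $G$ is non-reduced) and again manipulates lifts inside a non-commutative group.

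Your uniqueness argument has the same structural flaw: $\varphi_1\varphi_2^{-1}$ is a homomorphism only if the relevant images commute -- commutativity of the source $G_A$ does not suffice when the target $G$ is non-commutative -- and the claim that this difference ``kills $H_A$'' is unjustified (that is equivalent to $\psi_1=\psi_2$, which is part of what must be proved). Both points are repaired once one knows every lift factors through the commutative subgroup $N$: the composite $G_A \to G \to G/N$ has affine target, and since $\cO(G_A)=k$ every affine quotient of $G_A$ is trivial, so any $\varphi$ lands in $N$; then the difference of two lifts is an honest homomorphism from $G_A$ into the affine kernel of $N \to A$, hence trivial. So the ingredients you cite (anti-affineness of $G_A$, $\Hom(A,H)=0$, projectivity in $\tcC$) are the right ones, but without the affinization theorem reducing to the central subgroup $N$ the argument does not go through.
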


\begin{proof}
As $G$ is quasi-compact, we may
freely use the results of \cite[V.3]{Pe75} on the
representability of the fpqc quotients of $G$ and 
its subgroup schemes. Also, recall the affinization
theorem (see \cite[4.2.2]{Pe75}): $G$ has 
a largest normal subgroup scheme $N$ such that 
$G/N$ is affine; moreover, $N$ is geometrically
integral and contained in the center of $G^0$. 
In particular, $N$ is an object of $\tcC$.
Let $B$ denote the scheme-theoretic image of
$N$ under $f: G \to A$. Then $B$ is an abelian 
subvariety of $A$; moreover, $A/B$ is isomorphic
to a quotient of $G/N$, and hence is affine. 
So $B = A$, and hence $f$ restricts to an
epimorphism $g : N \to A$ in $\tcC$ with 
affine kernel. This readily yields the existence
of $\varphi$, $\psi$.

For the uniqueness, just note that $\varphi$
factors through $N$, since every affine quotient
of $G_A$ is trivial.
\end{proof}

\begin{proposition}\label{prop:field}
The formation of $G_A$ commutes with base change under
arbitrary field extensions.
\end{proposition}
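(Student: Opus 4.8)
The plan is to show that for any field extension $K/k$, the base-changed exact sequence
\[
0 \longrightarrow (H_A)_K \longrightarrow (G_A)_K \longrightarrow A_K \longrightarrow 0
\]
is the universal extension of $A_K$ by an affine group scheme; by the uniqueness in the universal property, this forces $(G_A)_K \simeq G_{A_K}$ compatibly with the sequences. So the heart of the matter is to check that $(G_A)_K$ is the projective cover of $A_K$ in $\tcC_K$ — equivalently, in view of the isomorphism (\ref{eqn:homext}), that pushout of the base-changed sequence induces an isomorphism $\Hom_{\tcL_K}((H_A)_K, H') \xrightarrow{\simeq} \Ext^1_{\tcC_K}(A_K, H')$ for every commutative affine $K$-group scheme $H'$.

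First I would reduce to the case of an algebraic quotient. Recall from \S\ref{subsec:univ} that $f_A : G_A \to A$ is the filtered inverse limit of all anti-affine extensions of $A$, i.e. of the algebraic groups $G$ equipped with an epimorphism $f: G \to A$ with affine kernel and $\cO(G) = k$. Since base change along $K/k$ commutes with filtered inverse limits of quasi-compact schemes and is exact on $\tcC$ (monomorphisms and epimorphisms are preserved), $(G_A)_K$ is the filtered inverse limit of the $G_K$ for $G$ ranging over anti-affine extensions of $A$. The key point is then the \emph{cofinality} statement: the groups $G_K$, as $G$ runs over anti-affine extensions of $A$ over $k$, form a cofinal subsystem among the anti-affine extensions of $A_K$ over $K$. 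Given this, $(G_A)_K$ is an inverse limit over a cofinal subsystem of the system defining $G_{A_K}$, whence $(G_A)_K \simeq G_{A_K}$.

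For the cofinality I would argue as follows. An anti-affine extension $G'$ of $A_K$ is determined (up to the quotient identifications) by a class in $\Ext^1_{\tcC_K}(A_K, H')$ with $H'$ affine algebraic over $K$; by \cite[\S 3.3]{Br18} applied over $K$, every such class is a quotient of the universal class, so it suffices to produce, for a cofinal family of pairs $(A_K, H')$, a $k$-form. The structural description recalled in the introduction is the guide: $H_A$ is an extension of a unipotent group scheme $U_A$ by the group scheme of multiplicative type $M_A$ with Cartier dual $\wA(\kb)$; the multiplicative part is governed by $\wA$ (whose formation commutes with field extension, being the identity component of a $\Pic$-scheme that is defined over $k$), and the unipotent part by $\Ext^1$ of $A$ with $\bG_a$, i.e. essentially by $\rH^1(A, \cO_A)$, whose formation also commutes with flat base change by cohomology and base change. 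So each ``layer'' of $G_{A_K}$ descends to $k$, and assembling these descents gives the required cofinal subsystem of $k$-forms.

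The main obstacle I expect is the cofinality/descent step in positive characteristic, where $U_A$ is profinite rather than a vector group: one cannot simply invoke $\rH^1(A,\cO_A)$ but must control the inverse system of Frobenius-type extensions and check that it, too, is obtained by base change from $k$. An alternative, perhaps cleaner, route that sidesteps the explicit structure theory is to verify the universal property of $(G_A)_K$ directly: given $0 \to H' \to G' \to A_K \to 0$ with $H'$ affine, show it is the pushout of the base-changed universal extension. This amounts to checking that $\Ext^1_{\tcC_K}(A_K, -)$ and $\Hom_{\tcL_K}((H_A)_K, -)$ agree, which by dévissage on $H'$ reduces to the cases $H' = \bG_m$, $H' = \bG_a$, and $H'$ finite — and for these the needed base-change statements (for $\Pic^0$, for $\rH^1(\cdot,\cO)$, and for $\Ext^1$ into finite group schemes via flat cohomology) are standard. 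I would present the argument in this second form, isolating the three elementary cases as the only computations.
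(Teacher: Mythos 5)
Your opening reduction (anti-affine extensions stay anti-affine after base change since $\cO(G_{k'}) = \cO(G)\otimes_k k'$, and base change commutes with the filtered inverse limit, so $(G_A)_K = \lim_{\leftarrow} G_K$) coincides with the first half of the paper's proof. But from there you head into the hard direction and the essential difficulty is left unresolved. Your ``cofinality'' claim --- that the $G_K$ dominate all anti-affine extensions of $A_K$ --- is essentially equivalent to the proposition itself, and the proposed way to prove it (``produce a $k$-form'' of each layer) cannot work as stated: the $\bG_m$-extension of $A_K$ attached to a point $x \in \wA(K)$ not defined over $k$ admits no $k$-form as an extension, so the issue is not descent of individual extensions but surjectivity of $\Hom_{\tcL_K}((H_A)_K,-)\to\Ext^1_{\tcC_K}(A_K,-)$, which is what you are trying to prove. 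Moreover the structural facts you invoke (Cartier dual of $M_A$ equal to $\wA(\bar k)$, the description of $U_A$) are established in the paper \emph{after} and \emph{using} Proposition \ref{prop:field} (Lemma \ref{lem:H_A} begins ``By Proposition \ref{prop:field}\dots''), so the argument is circular as written; and you yourself flag the positive-characteristic case as an open obstacle. Your alternative route has the same defect in sharper form: the d\'evissage of an arbitrary commutative affine algebraic $K$-group to $\bG_m$, $\bG_a$ and finite group schemes is false over a general field (anisotropic tori and, over imperfect fields, wound unipotent groups are not iterated extensions of these), the five-lemma bookkeeping with the connecting maps $\Hom \to \Ext^1$ is not addressed, and the ``standard'' base cases are not standard: identifying $\Hom_{\tcL_K}((H_A)_K,\bG_{m,K})$ with $\wA(K)$ is precisely Lemma \ref{lem:H_A}(i), a consequence of this proposition.

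The missing idea is the paper's short comparison argument, which avoids verifying the universal property over $K$ altogether. From $(G_A)_{k'} = \lim_{\leftarrow} G_{k'}$ one gets a morphism $\varphi$ between $(G_A)_{k'}$ and $G_{A_{k'}}$ over $A_{k'}$ (with $\psi$ between the affine kernels); the snake lemma gives $\Coker(\varphi)\simeq\Coker(\psi)$, which is affine, and since $\cO((G_A)_{k'}) = \cO(G_A)\otimes_k k' = k'$ (anti-affineness survives base change) every affine quotient in sight is trivial, so $\varphi$ is an epimorphism; projectivity of $G_{A_{k'}}$ in $\tcC_{k'}$ then splits it, $(G_A)_{k'}\simeq G_{A_{k'}}\times\Ker(\varphi)$, and $\Ker(\varphi)\simeq\Ker(\psi)$ is an affine direct factor of an anti-affine group scheme, hence trivial. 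Without some substitute for this step (or a genuinely complete proof of your cofinality claim), the proposal does not establish the proposition.
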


\begin{proof}
Let $k'/k$ be a field extension. For any anti-affine extension
$f: G \to A$, the base change $f_{k'} : G_{k'} \to A_{k'}$
is an anti-affine extension again, as
$\cO(G_{k'}) = \cO(G) \otimes_k k'$ (see e.g. 
\cite[VIB.11.1]{SGA3}). Also, since $G_A = \lim_{\leftarrow} G$
(where $G$ runs over the above anti-affine extensions of $A$)
and base change commutes with filtered inverse limits, we obtain 
$(G_A)_{k'} = \lim_{\leftarrow} G_{k'}$. This yields a commutative 
diagram with exact rows
\[ \xymatrix{
0 \ar[r] & (H_A)_{k'} \ar[r] \ar[d]^{\psi} & (G_A)_{k'} \ar[r] \ar[d]^{\varphi} 
& A_{k'}  \ar[r] \ar[d]^{\id} & 0 \\
0 \ar[r] & H_{A_{k'}} \ar[r]  & G_{A_{k'}}  \ar[r] & A_{k'} \ar[r] & 0. \\
} \]
As a consequence, $\Coker(\varphi) \simeq \Coker(\psi)$
is affine. Since $(G_A)_{k'}$ is anti-affine, it follows that $\varphi$
is an epimorphism. As $G_{A_{k'}}$ is projective in $\tcC_{k'}$,
this yields an isomorphism 
$(G_A)_{k'} \simeq  G_{A_{k'}} \times \Ker(\varphi)$.
In particular, $\Ker(\varphi)$ is a quotient of $(G_A)_{k'}$.
But $\Ker(\varphi) \simeq \Ker(\psi)$ is affine, and hence trivial.
\end{proof}

The commutative affine group scheme $H_A$ lies in a unique
exact sequence 
\begin{equation}\label{eqn:H_A}
0 \longrightarrow M_A \longrightarrow H_A
\longrightarrow U_A \longrightarrow 0,
\end{equation}
where $M_A$ is of multiplicative type and $U_A$ is
unipotent; when $k$ is perfect, this exact sequence
has a unique splitting (see \cite[IV.3.1.1]{DG}). We now
describe the structure of $M_A$:

\begin{lemma}\label{lem:H_A}

\begin{enumerate}

\item[{\rm (i)}] For any field extension $k'/k$, there is
a natural isomorphism
$\Hom_{\tcL}(H_{A_{k'}},\bG_{m,k'}) \simeq \wA(k')$.

\item[{\rm (ii)}] The Cartier dual of $M_A$ is the 
$\Gamma$-module $\wA(\bar{k})$.

\item[{\rm (iii)}] The group scheme $H_A$ is not
algebraic. When $k$ is imperfect, the exact
sequence (\ref{eqn:H_A}) does not split.

\end{enumerate}

\end{lemma}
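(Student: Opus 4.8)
The plan is to prove the three assertions in turn, using the isomorphism \eqref{eqn:homext} together with its base-change compatibility from Proposition \ref{prop:field}, and the structure theory of group schemes of multiplicative type.

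For part (i), the idea is to combine \eqref{eqn:homext} (applied over $k'$, using that $(H_A)_{k'} \simeq H_{A_{k'}}$ by Proposition \ref{prop:field}) with the standard identification $\Ext^1_{\tcC_{k'}}(A_{k'},\bG_{m,k'}) \simeq \Pic^0(A_{k'}) = \wA(k')$. The latter is the Weil--Barsotti formula: extensions of an abelian variety by $\bG_m$ are classified by the dual abelian variety. One should check that the identification is functorial in $k'$, i.e., compatible with field extensions $k'' /k'$; this follows because both sides are computed from the universal extension \eqref{eqn:G_A}, whose formation commutes with base change. So (i) is essentially a matter of assembling known facts.

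For part (ii), recall that a commutative affine $k$-group scheme of multiplicative type is determined by its Cartier dual, which is the $\Gamma$-module $\Hom_{\tcL}(M_{A,k_s}, \bG_{m,k_s}) = \Hom_{k_s\text{-gp}}(M_{A,k_s},\bG_{m,k_s})$ together with its natural Galois action. Since $U_A$ is unipotent, $\Hom(U_{A,k_s},\bG_{m,k_s}) = 0$ and $\Ext^1(U_{A,k_s},\bG_{m,k_s})$ has no multiplicative part, so the exact sequence \eqref{eqn:H_A} induces an isomorphism $\Hom_{\tcL}(H_{A,k_s},\bG_{m,k_s}) \simeq \Hom_{\tcL}(M_{A,k_s},\bG_{m,k_s})$ compatibly with the $\Gamma$-action. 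By part (i) applied with $k' = k_s$, the left-hand side is $\wA(k_s) = \wA(\bar k)$ (the last equality because $\wA$ is smooth, so its $\bar k$-points are defined over $k_s$), with its natural Galois action. Hence the Cartier dual of $M_A$ is $\wA(\bar k)$ as a $\Gamma$-module, which is (ii). I expect the only delicate point here is confirming that the isomorphism of (i) is $\Gamma$-equivariant, which again reduces to the Galois-equivariance of the Weil--Barsotti identification and of the formation of $G_A$.

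For part (iii): if $H_A$ were algebraic, then $G_A$ would be algebraic (an extension of $A$ by an algebraic group), hence an anti-affine algebraic group; but then $\Ext^1_{\tcC}(A,H)$ would be a quotient of $\Hom(H_A,H)$ for a fixed algebraic $H_A$, which is impossible since, e.g., $\Ext^1_{\tcC}(A,\bG_m^n) \simeq \wA(k)^n$ is unbounded as $n$ grows while $\Hom(H_A,\bG_m^n) = \Hom(H_A,\bG_m)^n$ would have to surject onto it with $H_A$ of bounded dimension—more directly, $M_A$ has infinite Cartier dual $\wA(\bar k)$ by (ii), so $M_A$, and a fortiori $H_A$, is not of finite type. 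For the non-splitting of \eqref{eqn:H_A} over an imperfect $k$: the main obstacle, and the real content of this part, is to exhibit the obstruction. The strategy is to suppose a splitting $H_A \simeq M_A \times U_A$ exists and derive a contradiction with the universal property \eqref{eqn:homext}. Concretely, a splitting would give, for every finite subquotient, a compatible splitting of the corresponding extension of $A$; but in positive characteristic $U_A$ is profinite, and the relevant extensions of $A$ by infinitesimal or \'etale unipotent groups arising from $\alpha_p$- or Frobenius-type phenomena do not split off the multiplicative part uniformly over an imperfect field—one pins this down by choosing a specific isogeny (e.g. related to the relative Frobenius, cf. the discussion around \eqref{eqn:iH_AH_B}) whose pullback to $H_A$ would be forced to split, contradicting the known non-splitting of $H_A$ over an imperfect field into unipotent and multiplicative parts at the finite level. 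I expect this last point to require the most care: one must identify an explicit finite-type "witness" extension and check by hand that its class in $\Ext^1$ is not in the image of the would-be splitting, likely invoking that over an imperfect field there exist unipotent group schemes (such as forms of $\bG_a$, or $\ker(\Fr)$ on a suitable variety) that are not split, so that the corresponding component of $H_A$ cannot be a product.
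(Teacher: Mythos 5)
Part (i) of your proposal is fine and matches the paper. The problems are in (ii) and in the second half of (iii). In (ii), both key steps of your argument fail precisely over imperfect fields, which is the case this lemma is really about. The restriction map $\Hom_{\tcL}(H_{A,k_s},\bG_{m,k_s}) \to \Hom_{\tcL}(M_{A,k_s},\bG_{m,k_s})$ is injective (as $\Hom(U_A,\bG_m)=0$), but it is \emph{not} surjective in general: extending a character of $M_{A,k_s}$ across the unipotent quotient meets an obstruction in $\Ext^1_{\tcL}(U_{A,k_s},\bG_{m,k_s})$, which is nonzero over an imperfect separably closed field (the paper's Example \ref{ex: blocks} exhibits a nonsplit extension $0 \to \bG_m \to H \to \alpha_p \to 0$); ``has no multiplicative part'' is not a valid justification. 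Likewise your parenthetical claim that $\wA(k_s)=\wA(\bar k)$ ``because $\wA$ is smooth'' is false for imperfect $k$: smoothness gives density of $k_s$-points, not equality, and in fact the failure of this equality is exactly what drives the paper's proof of (iii). Your two errors compensate to land on the correct statement, but the argument does not prove it. The correct (and paper's) route is to take $k'=\bar k$ in (i): since $\bar k$ is perfect, $H_{A,\bar k}\simeq M_{A,\bar k}\times U_{A,\bar k}$, so characters of $H_{A,\bar k}$ and of $M_{A,\bar k}$ agree, and the latter compute the Cartier dual of $M_A$, yielding $\rD(M_A)\simeq \wA(\bar k)$ as $\Gamma$-modules.

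In (iii), the non-splitting assertion is the real content, and your proposal does not contain a proof: the sketch invokes ``the known non-splitting of $H_A$ over an imperfect field \dots at the finite level,'' which is circular (that is what is to be shown), and the promised explicit ``witness'' extension is never produced. The paper's argument is short and worth internalizing: a splitting $H_A\simeq M_A\times U_A$, combined with Proposition \ref{prop:field}, gives $\Hom_{\tcL}(H_{A_{k'}},\bG_{m,k'})\simeq\Hom_{\tcL}((M_A)_{k'},\bG_{m,k'})$ for every $k'$; since $M_A$ is of multiplicative type, all its characters are defined over $k_s$, so by (i) this forces $\wA(k_s)=\wA(\bar k)$. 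But over an imperfect field this fails: choose an affine open $U\subset\wA$, a finite surjective $F:U\to\bA^g_k$ by Noether normalization, and $t\in\bar k\setminus k_s$; the fiber of $F$ over $(t,0,\dots,0)$ is nonempty yet contains no $k_s$-point. (Note also that in your non-algebraicity argument, ``infinite Cartier dual'' is not enough --- $\bG_m$ has infinite character group; what you need is that $\wA(\bar k)$ is not finitely generated, e.g.\ because it has nonzero $\ell$-torsion for every prime $\ell\neq p$.)
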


\begin{proof}
(i) By Proposition \ref{prop:field}, we may assume
that $k' = k$. Then (\ref{eqn:homext}) yields 
a natural isomorphism 
$\Hom_{\tcL}(H_A,\bG_m) \simeq \Ext^1_{\tcC}(A,\bG_m)$,
which implies the asssertion in view of the Weil-Barsotti 
formula (see e.g. \cite[III.17, III.18]{Oort}).

(ii) This follows readily from (i) by taking $k' = \bar{k}$.

(iii) Assume that $H_A$ is an algebraic group. Then
$\Hom_{\tcL}(H_{A_{\bar{k}}},\bG_{m,\bar{k}})$
is a finitely generated abelian group. But $\wA(\bar{k})$ 
is not finitely generated, since it has nonzero $\ell$-torsion
for any prime $\ell \neq p$; a contradiction.

Next, assume that the extension (\ref{eqn:H_A}) splits. 
Then the resulting isomorphism 
$H_A \simeq M_A \times U_A$
and Proposition \ref{prop:field} yield an isomorphism 
\[ \Hom_{\tcL}(H_{A_{k'}},\bG_{m,k'})
\simeq \Hom_{\tcL}((M_A)_{k'},\bG_{m,k'}) \]
for any field extension $k'/k$. As $M_A$
is of multiplicative type, it follows that the
natural map
$\Hom_{\tcL}(H_{A_{k_s}},\bG_{m,k_s})
\to \Hom_{\tcL}(H_{A_{\bar{k}}},\bG_{m,\bar{k}})$
is an isomorphism. In view of (i), this yields the equality
$\wA(k_s) = \wA(\bar{k})$. Choose a non-empty
open affine subscheme $U$ of $\wA$; then 
$U(k_s) = U(\bar{k})$ as well. By Noether normalization, 
there exists a finite surjective morphism
$F: U \to \bA_k^g$. If $k$ is imperfect, choose 
$t \in \bar{k} \setminus k_s$; then the fiber of $F$ at 
$(t,0,\ldots,0)\in \bA^g_k(\bar{k})$ contains no 
$k_s$-rational point, a contradiction.
\end{proof}

We now turn to the structure of $U_A$. If $p = 0$, 
then $U_A$ is the vector group 
associated with the dual vector space of 
$\rH^1(A,\cO_A)$ (see \cite[Lem.~3.8]{Br18}); 
thus, $U_A \simeq (\bG_a)^g$. If $p > 0$, then $U_A$ 
is profinite; more specifically, $U_A$ is the largest unipotent 
quotient of the profinite fundamental group of $A$,  
$\lim_{\leftarrow} A[n]$ (as follows from 
\cite[Thm.~3.10]{Br18}). Thus, $U_A$ is the largest
unipotent quotient of its pro-$p$ part,
$A[p^{\infty}] := \lim_{\leftarrow} A[p^n]$. 

Next, recall from \cite[\S 15]{Mumford} that
\begin{equation}\label{eqn:ptors}
A[p^n]_{\bar{k}} \simeq 
(\bZ/p^n\bZ)^r_{\bar{k}} \times (\mu_{p^n})^r_{\bar{k}}
\times G_n, 
\end{equation}
where $r = r_A$ is an integer independent of $n$ (called 
the $p$-rank of $A$), and $G_n$ is a unipotent infinitesimal
$\bar{k}$-group scheme of order $p^{2n(g - r)}$. 
The abelian variety $A$ is said to be \emph{ordinary}
if it has maximal $p$-rank, i.e., $r = g$ (see \cite[\S 2.1]{ST}
for further characterizations of ordinary abelian varieties). 
It follows that
\[ (U_A)_{\bar{k}} \simeq \lim_{\leftarrow} \,
(\bZ/p^n\bZ)^r_{\bar{k}} \times G_nn \simeq
(\bZ_p)^r_{\bar{k}} \times \lim_{\leftarrow} \, G_n. \]
As a consequence, $A$ is ordinary if and only if
$U_A$ is pro-\'etale; then 
\begin{equation}\label{eqn:U_A}
U_A(k_s) \simeq (\bZ_p)^g.
\end{equation}

\begin{remark}\label{rem:formal}
When $k$ is algebraically closed, the groups 
$\Ext^1_{\cC}(A,H)$, where $H$ is a commutative affine 
algebraic group, have been determined by Wu in \cite{Wu}. 
His results may be recovered from the above description 
of $H_A$ in view of the isomorphism (\ref{eqn:homext}).

Also, this description can be interpreted in terms of
formal groups associated with $\wA$, via Cartier duality 
which yields an anti-equivalence of $\tcL$ with 
the category of commutative formal $k$-groups (see 
\cite[VIIB.2]{SGA3}, that we will freely use as a general 
reference for formal groups). Under that equivalence, 
the exact sequence (\ref{eqn:H_A}) corresponds to 
an exact sequence of formal groups,
\[ 0 \longrightarrow \rD(U_A) \longrightarrow \rD(H_A)
\longrightarrow \rD(M_A) \longrightarrow 0, \]
where $\rD(U_A)$ is infinitesimal and $\rD(M_A)$
is \'etale; this exact sequence splits if and only if
$k$ is perfect. By Lemma \ref{lem:H_A} (ii), $\rD(M_A)$ 
corresponds to the $\Gamma$-group $\wA(\bar{k})$ 
under the equivalence of \'etale formal groups with 
$\Gamma$-groups. 

Also, if $p = 0$, then $\rD(U_A)$ is the infinitesimal 
formal group associated with the commutative 
Lie algebra $H^1(A,\cO_A)$. Since the latter is the
Lie algebra of $\wA$, we may identify $\rD(U_A)$ 
with the infinitesimal formal neighborhood of the
origin in $\wA$. 

This still holds if $p > 0$: then the above
infinitesimal formal group is isomorphic to
$\lim_{\rightarrow} \Ker(\rF^n_{\wA})$, where 
$\rF^n_{\wA} : \wA \to \wA^{(p^n)}$
denotes the $n$th relative Frobenius morphism.
Moreover, we have canonical isomorphisms
\[ \rD(\lim_{\rightarrow} \Ker(\rF^n_{\wA})) \simeq
\lim_{\leftarrow} \rD(\Ker(\rF^n_{\wA})) \simeq
\lim_{\leftarrow} \Ker(\rV^n_A), \]
where $\rV^n_A : A^{(p^n)} \to A$
denotes the $n$th Verschiebung, and the second 
isomorphism follows from duality between Frobenius and 
Verschiebung. Also, we have a canonical exact sequence
\[ 0 \longrightarrow \Ker(\rF^n_A) \longrightarrow
A[p^n] \longrightarrow \Ker(\rV^n_A)
\longrightarrow 0 \]
for any $n \geq 1$; as a consequence, 
$\lim_{\leftarrow} \Ker(\rV^n_A)$ may be identified
with the largest unipotent quotient of $A[p^{\infty}]$, 
i.e., with $U_A$. 
\end{remark}

\subsection{Applications to homogeneous vector bundles}
\label{subsec:hvb}

Let $\pi : E \to A$ be a vector bundle. For any scheme $S$,
consider the set $G(S)$ consisting of the pairs 
$(\varphi,a)$ satisfying the following conditions:

\begin{enumerate}

\item[{\rm (i)}] $\varphi :E_S \to E_S$ is an isomorphism
of $S$-schemes, and $a \in A(S)$.

\item[{\rm (ii)}] The diagram
\[ \xymatrix{
E_S \ar[r]^{\varphi}\ar[d]_{\pi_S} & E_S \ar[d]^{\pi_S} \\
A_S  \ar[r]^{\tau_a} & A_S \\
} \]
commutes, where $\tau_a$ denotes the translation by
$a$ in $A_S$.

\item[{\rm (iii)}] The isomorphism of $A_S$-schemes
$E_S \to \tau_a^*(E_S)$ induced by $\varphi$, is 
an isomorphism of vector bundles.

\end{enumerate}

Alternatively, we may view $G(S)$ either as the set of pairs 
$(a,\psi)$, where $a \in A(S)$ and $\psi : E_S \to \tau_a^*(E_S)$ 
is an isomorphism of vector bundles (see \cite[p.~72]{Miyanishi}),
or as the group of automorphisms of the $S$-scheme $E_S$ 
which lift translations in $A_S$ and commute with the action
of $\bG_{m,S}$ by multiplication on fibers.

Clearly, $G(S)$ is a group for componentwise multiplication
of pairs $(\varphi,a)$;
moreover, the assignment $S \mapsto G(S)$ extends to 
a group functor that we still denote by $G$, or $G_E$ 
to emphasize its dependence in $E$. The second
projection yields a morphism of group functors 
$f : G \to A$; the kernel of $f$ is the group functor
$H = H_E$ of automorphisms of the vector bundle $E$.

\begin{lemma}\label{lem:cG}

\begin{enumerate}

\item[{\rm (i)}] $H$ is a smooth connected
affine algebraic group.

\item[{\rm (ii)}] $G$ is an algebraic group.

\item[{\rm (iii)}] $E$ is equipped with a $G$-linearization.

\end{enumerate}

\end{lemma}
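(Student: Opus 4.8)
Proof proposal.

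The plan is to establish part (i) first, since it is the technical heart of the matter, and then deduce (ii) and (iii) with relatively little extra work. For (i), the group functor $H = H_E$ is the automorphism functor of the vector bundle $E$, i.e., $H(S) = \Aut_{A_S}(E_S)$. The first step is to identify $H$ with the group of units in the algebra $R := \End_A(E) = \rH^0(A, \cEnd(E))$, where $\cEnd(E) = E^\vee \otimes E$. Since $A$ is projective, $R$ is a finite-dimensional $k$-algebra, so the functor $S \mapsto (R \otimes_k \cO(S))^\times$ is represented by an open subscheme of the affine space underlying $R$ — this gives representability by a smooth affine algebraic group, which moreover is connected because it is open in an affine space (which is irreducible). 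The one subtlety to check is that the functor of units of $R$ really does coincide with $H$: this requires that $\End_A(E)$ commutes with flat base change $S/k$, i.e. $\End_{A_S}(E_S) = R \otimes_k \cO(S)$ for affine $S$, which follows from cohomology and base change (or simply flatness of $\cO(S)$ over $k$ together with $A$ being proper so that $\rH^0$ is finitely generated and its formation commutes with flat base change). So the main obstacle in (i) is simply packaging the base-change statement cleanly; there is no real difficulty once $H$ is recognized as $R^\times$.

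For (ii), I would show $G$ is an extension of $A$ by the algebraic group $H$. The morphism $f : G \to A$ is the second projection, and by construction $\Ker(f) = H$. To get that $G$ is an algebraic group it suffices to produce a faithfully flat morphism of group functors onto $G$ from an algebraic group, or equivalently to show $f$ is representable by an $H$-torsor over $A$-points in a suitable sense and that $G$ is of finite type. Concretely: the set of $a \in A(S)$ that lift to some $(\varphi, a) \in G(S)$ is exactly the set of $a$ with $\tau_a^* E_S \simeq E_S$ as vector bundles; the "homogeneous" bundles are by hypothesis those for which this holds for all $a$, but in general $G \to A$ need not be surjective. Rather than worry about surjectivity, I would argue that $G$ is representable by descent: the functor $S \mapsto \{(a,\psi)\}$ with $a \in A(S)$, $\psi : E_S \xrightarrow{\sim} \tau_a^* E_S$, is an fppf sheaf, and over the locally closed subscheme $A' \subseteq A$ parameterizing those translations fixing the isomorphism class of $E$ (which is a subgroup scheme, being the stabilizer of $[E] \in \Pic$ or more precisely in the relevant moduli functor), the forgetful map $G \to A'$ is a torsor under the affine algebraic group $H$ of (i), hence representable and affine; since $A'$ is of finite type, so is $G$. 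This establishes (ii).

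For (iii), a $G$-linearization of $E$ is precisely the data making the tautological action of $G$ on $E$ (by the isomorphisms $\varphi$) into an action in the category of vector bundles compatible with the $G$-action on $A$ through $f$ — but this is exactly how $G$ was defined: an element $(\varphi,a) \in G(S)$ is by definition an automorphism $\varphi$ of $E_S$ lifting $\tau_a$ and respecting the vector bundle structure (condition (iii) in the definition of $G(S)$). So the universal family over $G$ itself supplies the linearization tautologically; one just checks the cocycle condition, which is immediate from the group law on $G$ being componentwise composition of the $\varphi$'s. Thus (iii) requires no more than unwinding definitions. Overall the only place demanding care is the representability in (i)–(ii); I expect the identification $H \simeq \GL_1(\End_A E)$ to do essentially all the work there.
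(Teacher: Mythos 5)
Parts (i) and (iii) of your proposal are essentially the paper's own argument: for (i) the paper likewise realizes $H$ as the unit group of the endomorphism monoid of $E$, represented by the affine space of the finite-dimensional algebra $\End_A(E)$ (your base-change remark, using properness of $A$ and flatness of $\cO(S)$ over $k$, is exactly the point that makes the functorial identification work), and for (iii) the linearization is indeed tautological from the construction of $G$ together with the universal isomorphism (the paper simply cites \cite[I.6.5.3]{SGA3}).

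The gap is in (ii), at the step where you introduce ``the locally closed subscheme $A' \subseteq A$ parameterizing those translations fixing the isomorphism class of $E$''. For $E$ of rank $>1$ there is no scheme-theoretic moduli space of all vector bundles on $A$ in which $[E]$ is a point, so ``the stabilizer of $[E]$ in the relevant moduli functor'' is not an available construction; and merely endowing the set of $a \in A(\bar k)$ with $\tau_a^*(E_{\bar k}) \simeq E_{\bar k}$ with some reduced locally closed structure would not give the functorial property your descent argument needs, namely that every $S$-point of $A'$ lifts to $G$ fppf-locally on $S$ -- this is precisely what makes $G \to A'$ an $H$-torsor. Constructing $A'$ with that property is of the same nature as, and no easier than, proving representability of $G$ itself, so as written the key step is circular (the rank-one intuition via $\Pic$ does not transfer). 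Two standard repairs: (a) the paper's route, citing \cite[Prop.~6.3.2]{BSU} for the representability of the group scheme $\Aut^{\bG_m}_E$ of $\bG_m$-equivariant automorphisms of $E$, locally of finite type, sitting in an exact sequence $1 \to H \to \Aut^{\bG_m}_E \to \Aut_A$; one then defines $G$ as the pullback of $A \subset \Aut_A$ in $\Aut^{\bG_m}_E$, and finiteness of type follows since the kernel $H$ is algebraic and the image in $A$ is a subgroup scheme of the abelian variety. Or (b) represent $G$ directly: with $\mu, p : A \times A \to A$ the addition and the first projection, the functor $T \mapsto \Hom_{A \times T}(p_T^* E, \mu_T^* E)$ over the base $A$ (via the second projection) is representable by a linear scheme affine over $A$ (see \cite[III.7.7]{EGA}), and being an isomorphism is an open condition on $T$ because $A$ is proper; hence $G$ is quasi-affine over $A$, in particular an algebraic group. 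Either route removes any need for $A'$.
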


\begin{proof}
(i) Note that $H$ is the group of invertibles 
of the monoid functor of endomorphisms of the vector
bundle $E$, and this monoid functor is represented by 
an affine space. So the assertion follows from
\cite[II.2.3.6]{DG}.

(ii) The group functor of $\bG_m$-equivariant automorphisms 
of $E$ is represented by a group scheme $\Aut^{\bG_m}_E$, 
locally of finite type; moreover, we have an exact sequence
of group schemes
\[ 1 \longrightarrow H \longrightarrow \Aut^{\bG_m}_E
\longrightarrow \Aut_A \]
(see \cite[Prop.~6.3.2]{BSU}).
Viewing $A$ as a subgroup scheme of $\Aut_A$ via
its action by translation, we may identify $G$ with
the pullback of $A$ in $\Aut^{\bG_m}_E$; this yields
the assertion.

(iii) This follows e.g. from \cite[I.6.5.3]{SGA3}.
\end{proof}

By Lemma \ref{lem:cG}, there is an exact sequence 
of algebraic groups
\begin{equation}\label{eqn:cG}
1 \longrightarrow H_E \longrightarrow G_E
\stackrel{f}{\longrightarrow} A.
\end{equation}
We say that $E$ is \emph{homogeneous} if $f$ 
is faithfully flat; equivalently, (\ref{eqn:cG}) is 
right exact. Since $f$ is a morphism of algebraic
groups and $A$ is smooth, this amounts to the 
following condition:
for any $a \in A(\bar{k})$, the translation by $a$
in $A_{\bar{k}}$ lifts to an automorphism of the
$\bar{k}$-variety $E_{\bar{k}}$, linear on fibers.
Equivalently, $E_{\bar{k}} \simeq \tau_a^*(E_{\bar{k}})$
for any $a \in A(\bar{k})$. 

As a consequence, if $E$ is homogeneous, then so is 
the dual vector bundle $E^{\vee}$; if in addition $F$ 
is a homogeneous vector bundle over $A$, then 
$E \oplus F$ and $E \otimes F$ are homogeneous as well. 
Thus, the homogeneous vector bundles form 
a full subcategory of the category $\Vec_A$ of vector
bundles over $A$, stable under finite direct sums, duals 
and tensor products. We denote this additive tensor
subcategory by $\HVec_A$.

Let $E$ be a homogeneous vector bundle over $A$.
Then the algebraic group $G_E$ is smooth and
connected, since so are $H_E$ and $A$. 
This group acts transitively on $A$ via $f$,
and the stabilizer of the origin $0$ equals $H_E$. 
The $G_E$-linearization of $E$ restricts to an action 
of $H_E$ on the fiber $E_0$ via a representation
\[ \rho : H_E \longrightarrow \GL(E_0). \]
Moreover, the morphism $G_E \times E_0 \to E$
given by the action, factors through an isomorphism
\[ G_E \times^{H_E} E_0 
\stackrel{\simeq}{\longrightarrow} E, \]
where the left-hand side denotes the quotient
of $G_E \times E_0$ by the action of $H_E$ 
via $h \cdot (g,x) := (gh^{-1}, \rho(h) x)$;
this is the vector bundle over $A$ associated with
the $H_E$-torsor $G_E \to A$ and the 
$H_E$-representation in $E_0$.

Also, by Theorem \ref{thm:univ}, 
we have a commutative diagram with exact rows
\[ \xymatrix{
1 \ar[r] & H_A \ar[r] \ar[d]^{\psi} & G_A 
\ar[r]^-{f_A} \ar[d]^{\varphi} & A  \ar[r] \ar[d]^{\id} & 1 \\
1 \ar[r] & H_E \ar[r]  & G_E  \ar[r]^-{f} & A \ar[r] & 1 \\
} \]
for unique morphisms $\varphi = \varphi_E$, $\psi = \psi_E$; 
moreover, $\varphi_E$ factors through the center of $G_E$. 
This yields a representation 
$\rho \circ \psi : H_A \to \GL(E_0)$
and an isomorphim $G_A\times^{H_A} E_0 \simeq E$. 
In particular, $E$ is $G_A$-linearized.

Conversely, given a finite-dimensional representation
$\rho : H_A \to \GL(V)$, the quotient 
$H := H_A/\Ker(\rho)$ is algebraic and lies in
an exact sequence 
\[ 0 \longrightarrow  H \longrightarrow  G 
\longrightarrow A \longrightarrow 0,\]
where $G := G_A/\Ker(\rho)$ is algebraic as well.
Thus, we may form the associated vector bundle
$\cL_{G/H}(V)$ (see \cite[I.5.8, I.5.15]{Jantzen}): 
this is a $G$-linearized vector bundle on $A$, and hence 
is homogeneous. We denote this vector bundle by 
\[ \cL_A(V) = G_A \times^{H_A} V \longrightarrow A . \]
Its fiber at $0$ is $V$, on which $H_A$ acts via the
representation $\rho$. 

Denote by $H_A$-mod the category of finite-dimensional
representations of $H_A$; this is an abelian tensor category.
We may now state:

\begin{theorem}\label{thm:equiv}
The above assignments $E \mapsto E_0$, 
$V \mapsto G_A \times^{H_A} V$ extend to exact functors 
\[ \cM_A : \HVec_A \longrightarrow H_A\tmod, \quad
\cL_A : H_A\tmod \longrightarrow \HVec_A \]
which are quasi-inverse equivalences of additive tensor 
categories.
\end{theorem}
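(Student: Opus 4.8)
The plan is to verify that the two functors $\cM_A$ and $\cL_A$ are well-defined, exact, and mutually quasi-inverse, and that they respect the tensor structure. First I would check that $\cM_A : E \mapsto E_0$ is a functor $\HVec_A \to H_A\tmod$: given a homogeneous $E$, the preceding discussion produces a canonical $G_A$-linearization of $E$ (via the map $\varphi_E : G_A \to G_E$ from Theorem \ref{thm:univ}), hence an action of $H_A = \Ker(f_A)$ on the fiber $E_0$, i.e.\ a finite-dimensional $H_A$-module. A morphism of homogeneous bundles $E \to F$ is automatically $G_A$-equivariant for these canonical linearizations — this needs a short argument, e.g.\ that the linearization is intrinsic because $\varphi_E$ is the unique lift — so it induces an $H_A$-map $E_0 \to F_0$. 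Conversely $\cL_A$ is already constructed in the text: for $\rho : H_A \to \GL(V)$, one factors through an algebraic quotient $H_A/\Ker(\rho) =: H$, forms $G := G_A/\Ker(\rho)$, and takes the associated bundle $\cL_{G/H}(V) = G_A \times^{H_A} V$; functoriality in $V$ is the standard functoriality of the associated-bundle construction. The fact that $\cL_A(V)$ is homogeneous was noted: it carries a $G$-linearization, and any $G$-linearized bundle on the homogeneous space $A = G/H$ is translation-invariant.

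Next I would establish that the two composites are isomorphic to the identity. The composite $\cM_A \circ \cL_A$ sends $V$ to the fiber of $G_A \times^{H_A} V$ at $0$, which is canonically $V$ with its original $H_A$-action — this is essentially a definitional unwinding of the associated-bundle construction, together with the observation that the canonical $G_A$-linearization of $\cL_A(V)$ coming from Theorem \ref{thm:univ} agrees with the tautological one (both lift translations, and such a lift is unique up to the scalar ambiguity killed by working with the stabilizer of $0$). The composite $\cL_A \circ \cM_A$ sends a homogeneous $E$ to $G_A \times^{H_A} E_0$; but the discussion just before the theorem already exhibits a canonical isomorphism $G_A \times^{H_A} E_0 \xrightarrow{\simeq} E$, coming from the action morphism $G_E \times E_0 \to E$ precomposed with $\varphi_E$. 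One checks this isomorphism is natural in $E$. Granting all this, $\cM_A$ and $\cL_A$ are quasi-inverse equivalences of categories.

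It then remains to record exactness and compatibility with tensor products and duals. Since $\cL_A$ and $\cM_A$ are quasi-inverse equivalences of abelian categories, each is automatically exact, so the word ``exact'' in the statement is a consequence rather than something needing separate proof; alternatively one can see directly that $E \mapsto E_0$ is exact because taking the fiber at $0$ is the pullback along a flat (indeed, the structure) point and short exact sequences of homogeneous bundles remain exact on fibers. For the tensor structure: $\cM_A$ visibly sends $E \otimes F$ to $E_0 \otimes F_0$ and $\cO_A$ to the trivial module $k$, with the evident coherence isomorphisms, and these are $H_A$-equivariant because the $G_A$-linearization of a tensor product is the tensor product of the linearizations; duality is handled the same way. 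One should also confirm that $\cM_A$ carries the symmetry/associativity/unit constraints of $\HVec_A$ to those of $H_A\tmod$, which is routine naturality bookkeeping.

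The main obstacle I anticipate is the coherence/uniqueness argument showing that the canonical $G_A$-linearization attached to a homogeneous bundle $E$ (via the unique lift $\varphi_E$ of Theorem \ref{thm:univ}) is genuinely functorial in $E$ and compatible with direct sums, tensor products, and duals — in other words, that all the ``canonical'' identifications really are canonical and fit together. Concretely, one must check that for a morphism $u : E \to F$ the square relating $\varphi_E$ and $\varphi_F$ commutes after passing to $\GL(E_0) \to \GL(F_0)$, which comes down to the uniqueness clause in Theorem \ref{thm:univ} applied to the group $G_{E\oplus F}$ (or $G_E \times_A G_F$) and its map to $A$; and similarly that $\varphi_{E \otimes F}$ is induced by $\varphi_E$ and $\varphi_F$. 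Once the universal property of $(G_A, H_A)$ is leveraged systematically in this way, the rest is formal. A secondary, more technical point is verifying that $\cL_A(V)$ really is a \emph{locally free} sheaf (not merely quasi-coherent) of the expected rank $\dim V$, which follows because $G_A \to A$ is an $H_A$-torsor for the fpqc topology and the associated sheaf of a finite-dimensional representation along a torsor is locally free — but one has to be a little careful since $H_A$ is not of finite type, and the clean way is to pass to the algebraic quotient $G_A/\Ker(\rho) \to A$, which is an honest $H$-torsor with $H$ affine algebraic.
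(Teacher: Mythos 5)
Your overall architecture (define $\cM_A$ on morphisms via automatic equivariance, get the counit from the canonical isomorphism $G_A \times^{H_A} E_0 \simeq E$, read off the unit from the fiber at $0$, then record exactness and tensor compatibility) is sound and close to the paper's, but your route to the one genuinely nontrivial point -- that an arbitrary morphism $u : E \to F$ of homogeneous bundles is automatically $G_A$-equivariant for the canonical linearizations -- is different, and as written it is incomplete. The paper's mechanism is anti-affineness: since $\cO(G_A) = k$, the natural $G_A$-representation on the finite-dimensional space $\Hom_{\Vec_A}(E,F)$ is trivial, so \emph{every} morphism of vector bundles is $G_A$-equivariant. This single observation delivers functoriality of $\cM_A$, naturality of $G_A \times^{H_A} E_0 \simeq E$, and compatibility with sums, duals and tensor products with no coherence bookkeeping; full faithfulness is then obtained by descending to the common algebraic quotient $G = \mathrm{Im}(\varphi_E,\varphi_F)$ of $G_A$ and identifying $\Hom_{\Vec_A}(E,F)$ with $H$-equivariant maps $E_0 \to F_0$ for associated bundles over $A = G/H$. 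Your proposal never uses $\cO(G_A)=k$ and instead appeals to the uniqueness clause of Theorem \ref{thm:univ}; that can be made to work, but not in the form you state it: ``the square relating $\varphi_E$ and $\varphi_F$ commutes after passing to $\GL(E_0) \to \GL(F_0)$'' does not parse, since there is no such group homomorphism when $u_0$ is not invertible. The workable version of your idea is: by uniqueness, the canonical lift for $E \oplus F$ is the composite $G_A \to G_E \times_A G_F \to G_{E\oplus F}$, i.e.\ the diagonal linearization; then the shear automorphism $s_u$ of $E \oplus F$, $(e,f) \mapsto (e, u(e)+f)$, lies in $H_{E\oplus F}(k)$, so conjugation by $s_u$ preserves the projection to $A$, and uniqueness applied once more forces $s_u$ to centralize the image of $\varphi_{E\oplus F}$ -- which is precisely $G_A$-equivariance of $u$. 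With that supplement your approach goes through; what the paper's argument buys is that this rigidity comes for free from anti-affineness, in one stroke and uniformly in $E,F$.

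Two smaller points. First, your remark that exactness is automatic because quasi-inverse equivalences of abelian categories are exact is slightly circular: $\HVec_A$ is only known to be abelian as a consequence of this theorem (Corollary \ref{cor:abe}), and even then one must identify its short exact sequences with those of coherent sheaves; your direct arguments (fibers of locally split sequences of bundles for $\cM_A$, and exactness of the associated-bundle construction, checked after faithfully flat pullback to $G_A/\Ker(\rho)$, for $\cL_A$) are the right ones, and passing to the algebraic quotient $G_A/\Ker(\rho)$ is exactly how the paper both defines $\cL_A(V)$ and settles your local-freeness worry. Second, there is no ``scalar ambiguity'' to kill when comparing the tautological linearization of $\cL_A(V)$ with the canonical one: the uniqueness in Theorem \ref{thm:univ} is strict, so the two lifts coincide on the nose, which is also how the paper verifies $\cM_A(\cL_A(V)) = V$ (by factoring through a common algebraic quotient of $H_A$).
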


\begin{proof}
Let $E$, $F$ be homogeneous vector bundles
over $A$. Since they both are $G_A$-linearized, 
the finite-dimensional vector space $\Hom_{\Vec_A}(E,F)$ 
(consisting of the morphisms of vector bundles 
$\gamma: E \to F$) is equipped with a linear representation 
of $G_A$ (see \cite[I.6.6.2]{SGA3}). As
$\cO(G_A) = k$, this representation is trivial, i.e.,
every $\gamma$ as above is $G_A$-equivariant. 
As a consequence, the restriction $\gamma_0 : E_0 \to F_0$
is $H_A$-equivariant, where $H_A$ acts 
on $E_0$ and $F_0$ via the above representations.
The assignment $\gamma \mapsto \gamma_0$
defines the functor $\cM_A$ on morphisms.

Given $E$, $F$ as above, consider the image $G$ 
of the product morphism 
\[ (\varphi_E,\varphi_F) : 
G_A \longrightarrow G_E \times G_F. \] 
Then $G$ is an algebraic quotient of $G_A$, and lies 
in an exact sequence of algebraic groups
\[ 0 \longrightarrow H \longrightarrow G 
\longrightarrow A \longrightarrow 0, \]
for some algebraic quotient $H$ of $H_A$. 
Moreover, $E \simeq G \times^H E_0$, $F \simeq G \times^H F_0$,
and $G$ acts trivially on $\Hom_{\Vec_A}(E,F)$. This yields
an isomorphism
\[ \Hom_{\Vec_A}(E,F) \simeq 
\Hom_{\Vec_A}^G(G \times^H E_0,G \times^H F_0). \]
The right-hand side is contained in the set 
$\Hom^G(G \times^H E_0,G \times^H F_0)$
of $G$-equivariant morphisms of schemes, 
which is identified with
$\Hom^H(E_0,G \times^H F_0)$ via restriction; this identifies
the subset $\Hom_{\Vec_A}^G(G \times^H E_0,G \times^H F_0)$ 
with the subset of $H$-equivariant linear maps $E_0 \to F_0$. 
As a consequence, the functor $\cM_A$ is fully faithful.
It is essentially surjective, since $\cM_A(\cL_A(V)) = V$
for any finite-dimensional $H_A$-module $V$.

Next, note that every morphism of $H_A$-modules
$u: V \to W$ yields a morphism of associated vector bundles 
$\cL_A(u) : \cL_A(V) \to \cL_A(W)$ such that
$\cL_A(u)_0 = u$, as may be checked by factoring
both representations $H_A \to \GL(V)$, $H_A \to \GL(W)$
through a common algebraic quotient of $H_A$. This
defines the functor $\cL_A$ on morphisms, and shows that
it is quasi-inverse to $\cM_A$. Clearly, $\cL_A$ and $\cM_A$
are exact and preserve finite direct sums, duals and 
tensor products.
\end{proof}

\begin{corollary}\label{cor:abe}
The subcategory $\HVec_A$ of $\Vec_A$ is abelian
and stable under direct summands.
\end{corollary}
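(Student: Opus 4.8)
The plan is to deduce Corollary \ref{cor:abe} directly from Theorem \ref{thm:equiv}, using the fact that $H_A\tmod$ is an abelian category and that an equivalence of categories transports the abelian structure. Concretely, I would argue as follows. By Theorem \ref{thm:equiv}, the functor $\cM_A : \HVec_A \to H_A\tmod$ is an equivalence of categories, with quasi-inverse $\cL_A$. The target $H_A\tmod$, the category of finite-dimensional $H_A$-modules, is abelian: kernels and cokernels of $H_A$-module maps are again finite-dimensional $H_A$-modules. Since $\HVec_A$ is equivalent to an abelian category, it is itself abelian; in particular every morphism in $\HVec_A$ has a kernel and a cokernel, and the canonical map from coimage to image is an isomorphism.

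The one subtlety — and the step I expect to require the most care — is that \emph{a priori} $\HVec_A$ is only defined as a full subcategory of $\Vec_A$, and $\Vec_A$ itself is not abelian (a map of vector bundles need not have a locally free kernel or cokernel). So one must check that the abelian structure produced abstractly via the equivalence with $H_A\tmod$ is compatible with the ambient structure: that is, that for a morphism $\gamma : E \to F$ of homogeneous vector bundles, the kernel and cokernel \emph{computed in $\HVec_A$} coincide with the honest sheaf-theoretic kernel and cokernel, and that in particular these sheaf kernels and cokernels are again homogeneous vector bundles. The key point is that, as shown in the proof of Theorem \ref{thm:equiv}, any such $\gamma$ is automatically $G_A$-equivariant, hence factors through a morphism of $G$-linearized bundles $G \times^H E_0 \to G \times^H F_0$ for a suitable common algebraic quotient $1 \to H \to G \to A \to 1$ of $G_A$; under the associated-bundle construction $\cL_{G/H}$, which is exact (see \cite[I.5.8]{Jantzen}), the sheaf kernel of $\gamma$ is $\cL_{G/H}(\ker(\gamma_0))$ and the sheaf cokernel is $\cL_{G/H}(\coker(\gamma_0))$, both of which are vector bundles since $\ker(\gamma_0)$ and $\coker(\gamma_0)$ are vector spaces. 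Being of the form $G_A \times^{H_A}(-)$, they are homogeneous, i.e., they lie in $\HVec_A$.

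Having established this, the remaining assertions are immediate: the inclusion $\HVec_A \hookrightarrow \Vec_A$ being compatible with kernels and cokernels in the sense above, the abelian structure on $\HVec_A$ is the expected one, and it is exactly the structure under which $\cM_A$ and $\cL_A$ are exact, as already asserted in Theorem \ref{thm:equiv}. Stability under direct summands follows because if $E \simeq E_1 \oplus E_2$ in $\Vec_A$ with $E$ homogeneous, then the projection $E \to E$ onto $E_1$ is an idempotent endomorphism of the homogeneous bundle $E$; its image, computed as above, is a homogeneous vector bundle isomorphic to $E_1$. Alternatively one can simply note that $E_1$ is a direct summand of a numerically flat bundle, hence numerically flat, hence homogeneous — but invoking Langer's characterization is unnecessary here, and the self-contained argument via the idempotent is cleaner. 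This gives the corollary.
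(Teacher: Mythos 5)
Your proposal is correct and follows exactly the route the paper intends: the corollary is stated without proof as an immediate consequence of Theorem \ref{thm:equiv}, namely transporting the abelian structure of $H_A\tmod$ through the exact quasi-inverse equivalences $\cM_A$, $\cL_A$, and splitting idempotents for direct summands. Your extra verification that the sheaf-theoretic kernel and cokernel of a morphism of homogeneous bundles agree with those computed via the exact associated-bundle functor $\cL_{G/H}$ (so that they are again locally free and homogeneous) is precisely the point the paper leaves implicit, and it is handled correctly.
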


Next, we obtain a key vanishing result for the coherent
cohomology of $G_A$:

\begin{proposition}\label{prop:van}
$\rH^i(G_A, \cO_{G_A}) = 0$ for any $i \geq 1$.
\end{proposition}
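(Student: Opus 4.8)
The plan is to compute the cohomology of $\cO_{G_A}$ by writing $G_A$ as a filtered inverse limit of anti-affine extensions of $A$ and passing to the limit. Concretely, recall from \S\ref{subsec:univ} that $f_A : G_A \to A$ is the filtered inverse limit of all anti-affine extensions $f : G \to A$ (algebraic groups $G$ with $\cO(G) = k$ and $\Ker(f)$ affine), so it suffices to show $\rH^i(G, \cO_G) = 0$ for $i \geq 1$ for each such $G$, compatibly with the transition maps. Indeed, since each $f : G \to A$ is affine, $\rH^i(G, \cO_G) = \rH^i(A, f_* \cO_G)$; and since cohomology of quasi-compact quasi-separated schemes commutes with filtered inverse limits of schemes with affine transition morphisms (the structure sheaves form a directed system and one invokes e.g.\ the standard compatibility of $\rH^i(A, -)$ with filtered colimits of quasi-coherent sheaves, applied to $\lim_{\rightarrow} (f_i)_* \cO_{G_i}$ computing $(f_A)_* \cO_{G_A}$), we get $\rH^i(G_A, \cO_{G_A}) = \lim_{\rightarrow} \rH^i(G, \cO_G)$.

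So the heart of the matter is the vanishing $\rH^i(G, \cO_G) = 0$ for $i \geq 1$ when $G$ is an anti-affine group, i.e.\ $\cO(G) = k$. This is in fact a known structural fact: for an anti-affine algebraic group $G$ over a field, the Albanese morphism $G \to A$ (here our $f$) realizes $G$ as an $\rH$-torsor over $A$ with $\rH$ affine, and by Brion's work on the cohomology of anti-affine groups (or directly: the Leray spectral sequence for the affine morphism $f$ gives $\rH^i(G,\cO_G) = \rH^i(A, f_*\cO_G)$, and $f_* \cO_G = \bigoplus$ of the line bundles in $\Pic^0(A)$ appearing in the $\rH$-action, each nontrivial one having vanishing cohomology). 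More precisely: write $\rH = \Ker(f)$; then $f_*\cO_G$ decomposes under the residual $\rH$-action, and as a sheaf on $A$ it is a successive extension of algebraically trivial line bundles $L \in \Pic^0(A)$, with the trivial summand $\cO_A$ appearing exactly once because $\cO(G) = \rH^0(A, f_*\cO_G) = k$. Since $\rH^i(A, L) = 0$ for all $i \geq 0$ when $L \in \Pic^0(A)$ is nontrivial, and $\rH^i(A,\cO_A)$ for $i \geq 1$ is killed by... — here one must be careful: $\rH^i(A,\cO_A) \neq 0$ in general. So the trivial summand alone does not vanish; the point must instead be that the cohomology of $f_* \cO_G$ in positive degrees is killed by the $\rH$-module structure, i.e.\ one uses that $G$ being anti-affine forces the extension-theoretic data to exhaust $\rH^1(A,\cO_A)$ (in characteristic $0$, $U_A \simeq \rH^1(A,\cO_A)^\vee$ as a vector group; this is exactly \cite[Lem.~3.8]{Br18} cited above), so that in the limit the classes die.

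Thus the cleanest route is to work directly with $G_A$ rather than finite-level $G$'s. Apply the Leray spectral sequence to the affine morphism $f_A : G_A \to A$: it degenerates to isomorphisms $\rH^i(G_A, \cO_{G_A}) \cong \rH^i(A, (f_A)_* \cO_{G_A})$. Now $(f_A)_* \cO_{G_A}$ is the quasi-coherent $\cO_A$-algebra corresponding to the $H_A$-torsor $G_A \to A$; as an $\cO_A$-module it is the (filtered colimit of) the regular representation data, and I claim it is the \emph{universal} such extension in a sense that makes the connecting maps $\rH^i(A,\cO_A) \to \rH^{i}(\text{higher terms})$ injective in the colimit — equivalently, the cup-product action of $\rH^1(A,\cO_A) = \Lie(\wA) $ (dually, the Lie algebra of $U_A$) on $\rH^*(A,\cO_A)$ becomes, after the colimit built into $G_A$, the Koszul-type complex whose higher cohomology vanishes because $\rH^*(A,\cO_A) = \Lambda^* \rH^1(A,\cO_A)$ is the free exterior algebra and $G_A$'s construction tensors it up against a polynomial (resp.\ divided-power) algebra on the dual. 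Concretely in characteristic $0$: $(f_A)_*\cO_{G_A} = \cO_A \otimes_k \Sym(\rH^1(A,\cO_A))$ with the right twist, and $\rH^*(A, \cO_A \otimes \Sym(\rH^1(A,\cO_A)))$ is the Koszul complex $\Lambda^\bullet V \otimes \Sym V$ which is acyclic except in degree $0$. In characteristic $p$ one replaces $\Sym$ by the relevant profinite/divided-power object but the acyclicity is formally the same.

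\textbf{Main obstacle.} The delicate point is pinning down the $\cO_A$-algebra structure of $(f_A)_*\cO_{G_A}$ precisely enough to identify the resulting complex on $A$ as a Koszul (or divided-power Koszul) complex, uniformly in characteristic, and making the inverse-limit / colimit exchange rigorous (one needs that $\rH^i(A,-)$ commutes with the filtered colimit $\lim_{\rightarrow}(f_i)_*\cO_{G_i} = (f_A)_*\cO_{G_A}$, which is standard for quasi-coherent sheaves on the Noetherian scheme $A$, but one should state it). I expect the characteristic-$p$ bookkeeping — where $U_A$ is profinite rather than a vector group, so the "$\Sym$" is replaced by the coordinate ring of $\lim_{\leftarrow}\Ker(\rV^n_A)$ and the Koszul resolution is the limit of finite-level ones — to be where the real work lies; the characteristic-$0$ case should be a short Koszul-complex computation once $(f_A)_*\cO_{G_A} \cong \cO_A \otimes_k \Sym\,\rH^1(A,\cO_A)$ is established from \cite[Lem.~3.8]{Br18}.
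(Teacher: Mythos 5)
Your reduction (affineness of $f_A$ plus commutation of cohomology with the filtered colimit $(f_A)_*\cO_{G_A} = \lim_{\rightarrow} f'_*(\cO_{G'})$) matches the paper's first step, and you correctly recognize midway that the vanishing cannot hold at any finite level — each anti-affine extension $G'$ (e.g.\ $G' = A$ itself) has $\rH^i(G',\cO_{G'}) \neq 0$ — so everything hinges on the transition maps killing classes in the colimit. But that pivotal step is exactly what your proposal does not prove. In characteristic $0$ you assert $(f_A)_*\cO_{G_A} \cong \cO_A \otimes_k \Sym \rH^1(A,\cO_A)$ ``with the right twist'' and that the resulting complex is an acyclic Koszul complex; the twist is the entire content (untwisted, the cohomology is $\Lambda^*\rH^1 \otimes \Sym \rH^1$ with no differential and does not vanish), and identifying the twist as the universal extension class and checking that the induced maps really are the Koszul differential is the work to be done — it amounts to the vanishing of $\rH^{\geq 1}$ for the universal vector extension, which you invoke in spirit but never establish. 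In characteristic $p$ you explicitly defer (``I expect the characteristic-$p$ bookkeeping \dots to be where the real work lies''), and there the mechanism is genuinely different: $U_A$ is profinite, $\cO(U_A)$ is a colimit of finite-dimensional algebras rather than a polynomial or divided-power algebra, and the classes die in the colimit because $p_A^* = 0$ on $\rH^1(A,\cO_A) = \Ext^1_{\cC}(A,\bG_a)$ (pullback by multiplication by $p$ equals pushforward by $p$ on $\bG_a$, which is zero), not because of a formal Koszul acyclicity. So the proposal is a strategy sketch with its central claim unproved in both characteristics.

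For comparison, the paper's main argument avoids any explicit description of $(f_A)_*\cO_{G_A}$: it first uses that $\rH^*(G',\cO_{G'})$ is the exterior algebra on $\rH^1(G',\cO_{G'})$ for every anti-affine $G'$ (\cite[Thm.~1.1]{Br13}), so that only $\rH^1(G_A,\cO_{G_A})$ needs to vanish; it then identifies $\rH^1(G',\cO_{G'})$ with $\Ext^1_{\cC}(G',\bG_a)$ (a Totaro/Serre-type comparison, using that rational representations of anti-affine groups are trivial to remove the invariants), passes to the limit to get $\rH^1(G_A,\cO_{G_A}) \simeq \Ext^1_{\Pro(\cC)}(G_A,\bG_a)$, and concludes because $G_A$ is projective in $\Pro(\cC)$ — a uniform, characteristic-free argument. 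Your Koszul route is close in spirit to the paper's \emph{alternative} argument (via the universal vector extension and \cite[Prop.~4.1, Prop.~4.3]{Br13} in characteristic $0$, and the $p_{A'}^* = 0$ argument in characteristic $p$), but to turn your sketch into a proof you would need to supply precisely those inputs.
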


\begin{proof}
To simplify the notation, we set 
$G := G_A$, $H := H_A$ and $f := f_A$. 
Since the morphism $f: G \to A$ is affine, we have 
$R^i f_*(\cO_G) = 0$ for all $i \geq 1$ in view of
\cite[III.1.3.2]{EGA}. This yields isomorphisms
\[ \rH^i(G, \cO_G) \cong \rH^i(A,f_*(\cO_G)) 
\quad (i \geq 0). \]
Next, recall that $G \simeq \lim_{\leftarrow} G'$,
where the limit is taken over the filtered inverse system of 
anti-affine extensions $f' : G' \to A$. Thus, we have an isomorphism 
of quasi-coherent sheaves of $\cO_A$-algebras
\[ f_*(\cO_G) \simeq \lim_{\rightarrow} f'_*(\cO_{G'}). \]
Since cohomology commutes with direct limits, this yields
in turn isomorphisms
\[ \rH^i(G, \cO_G) \simeq 
\lim_{\rightarrow} \rH^i(A,f'_*(\cO_{G'})) 
\simeq \lim_{\rightarrow} \rH^i(G',\cO_{G'}) 
\quad (i \geq 0). \] 
As each $G'$ is anti-affine, the coherent cohomology ring
$\rH^*(G', \cO_{G'})$ is the exterior algebra over
$\rH^1(G', \cO_{G'})$ (see \cite[Thm.~1.1]{Br13}).
As a consequence, $\rH^*(G,\cO_G)$ is the exterior
algebra over $\rH^1(G,\cO_G)$. Thus, it suffices to
show the vanishing of $\rH^1(G,\cO_G)$.  For this, we may
assume that $k$ is algebraically closed, in view of
Proposition \ref{prop:field}.

By adapting the argument of \cite[Lem.~9.2]{Totaro}
(see also \cite[Thm.~VII.5]{Se97}), one may check that
the canonical map
\[ \Ext^1_{\cC}(G',\bG_a) \longrightarrow 
\rH^1(G',\cO_{G'})^{G'} \]
is an isomorphism, where the right-hand side
denotes the subspace of $G'$-invariants in
$\rH^1(G',\cO_{G'})$. As $\rH^1(G',\cO_{G'})$ 
is a (rational) $G'$-module (see e.g. \cite[Lem.~2.1]{Br13}) 
and $G'$ is anti-affine, this yields compatible isomorphisms
$\Ext^1_{\cC}(G',\bG_a) \simeq \rH^1(G',\cO_{G'})$,
and hence an isomorphism
\[ \Ext^1_{\Pro(\cC)}(G,\bG_a) \simeq \rH^1(G,\cO_G), \]
by using \cite[V.2.3.9]{DG}. The desired vanishing
follows from this, since $G$ is projective in $\Pro(\cC)$.

Alternatively, the vanishing of $\rH^1(G,\cO_G)$
can be obtained as follows. Let again $f' : G' \to A'$
be an anti-affine extension and consider the Albanese
morphism $\alpha : G' \to A'$; then $f'$ is the composition
of $\alpha$ with an isogeny $A' \to A$. If $p = 0$ and
$\alpha$ lifts to an epimorphism $G' \to E(A')$
(the universal vector extension of $A'$), then
$H^1(G', \cO_{G'}) = 0$ by 
\cite[Prop.~4.1, Prop.~4.3]{Br13}; this yields the desired
vanishing. On the other hand, if $p > 0$, then the pullback 
$\alpha^* :  \rH^*(A',\cO_{A'}) \to \rH^*(G', \cO_{G'})$ 
is an isomorphism by \cite[Cor.~4.2]{Br13}.    
The commutative diagram of multiplication maps
\[ \xymatrix{
G \ar[r] \ar[d]^{p_G} & G' \ar[r]^-{\alpha} \ar[d]^{p_{G'}} 
& A' \ar[d]^{p_{A'}} \\
G \ar[r]  & G'  \ar[r]^-{\alpha} & A' \\
} \]
yields a commutative diagram of pullbacks
\[ \xymatrix{
\rH^1(A',\cO_{A'}) \ar[r]^{\simeq} \ar[d]^{p_{A'}^*} & 
\rH^1(G',\cO_{G'}) \ar[r] \ar[d]^{p_{G'}^*} & 
\rH^1(G,\cO_G) \ar[d]^{p_G^*} \\
\rH^1(A',\cO_{A'}) \ar[r]^{\simeq}  
& \rH^1(G',\cO_{G'}) \ar[r] 
& \rH^1(G,\cO_G). \\
} \]
Moreover, $p_G$ is an isomorphism by \cite[Lem.~3.4]{Br18};
hence so is $p_G^*$. Thus, it suffices to show that 
$p_{A'}^* = 0$. But this follows from the isomorphism
$\Ext^1_{\cC}(A',\bG_a) \cong \rH^1(A',\cO_{A'})$ 
(see \cite[Thm.~VII.5]{Se97})
together with the equality $p_{A'}^* = (p_{\bG_a})_* = 0$ 
in $\Ext^1_{\cC}(A',\bG_a)$.
\end{proof}

\begin{theorem}\label{thm:ext}
Let $V$, $W$ be finite-dimensional $H_A$-modules. 
Then the map 
\[ \Ext^i_{H_A}(V,W) \longrightarrow 
\Ext^i_A(\cL_A(V),\cL_A(W)) \]
is an isomorphism for any $i \geq 0$,
where the right-hand side denotes the higher extension 
group of coherent sheaves on $A$.
\end{theorem}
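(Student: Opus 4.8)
The plan is to reduce the statement to a comparison of cohomology groups, and then to prove that comparison via the $H_A$-torsor $f_A\colon G_A\to A$ together with the vanishing of Proposition~\ref{prop:van}. First, since $V$ is finite-dimensional it is dualizable in $H_A\tmod$, so $\Ext^i_{H_A}(V,W)\cong\Ext^i_{H_A}(k,V^{\vee}\otimes W)=\rH^i(H_A,V^{\vee}\otimes W)$, the $i$-th (rational) cohomology group of the affine group scheme $H_A$. Likewise $\cL_A(V)$ is a vector bundle, hence dualizable in $\QCoh(A)$, so $\Ext^i_A(\cL_A(V),\cL_A(W))\cong\Ext^i_A(\cO_A,\cL_A(V)^{\vee}\otimes\cL_A(W))=\rH^i(A,\cL_A(V)^{\vee}\otimes\cL_A(W))$ (using also that $\Ext$ of coherent sheaves agrees with $\Ext$ of quasi-coherent sheaves on the Noetherian scheme $A$). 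Since $\cL_A$ is compatible with duals and tensor products (Theorem~\ref{thm:equiv}), one has $\cL_A(V)^{\vee}\otimes\cL_A(W)\cong\cL_A(V^{\vee}\otimes W)$. Writing $M:=V^{\vee}\otimes W$, it therefore suffices to produce a natural isomorphism $\rH^i(H_A,M)\cong\rH^i(A,\cL_A(M))$ for every finite-dimensional $H_A$-module $M$.

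To compute $\rH^*(A,\cL_A(M))$, recall that $f_A$ is an fpqc $H_A$-torsor (\S\ref{subsec:qc}) and that $f_A$ is affine, so $R^j(f_A)_*=0$ for $j>0$. Pulling $\cL_A(M)$ back along $f_A$ trivializes the torsor, and fpqc descent produces the associated ``bar'' resolution of $\cL_A(M)$ whose term in degree $q$ is $(f_A)_*\bigl(\cO_{G_A}\otimes_k\cO(H_A)^{\otimes q}\otimes_k M\bigr)$, with the usual Hochschild-type coface maps. Each of these terms is $\Gamma(A,-)$-acyclic: since $f_A$ is affine, $\rH^p\bigl(A,(f_A)_*(\cO_{G_A}\otimes_k N)\bigr)\cong\rH^p(G_A,\cO_{G_A})\otimes_k N$, which vanishes for $p>0$ and any finite-dimensional $N$ by Proposition~\ref{prop:van}. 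Hence $R\Gamma(A,\cL_A(M))$ is computed by the complex of global sections of this resolution; since $\Gamma(G_A,\cO_{G_A})=k$ with trivial $H_A$-action, that complex is precisely the Hochschild complex $\cO(H_A)^{\otimes\bullet}\otimes_k M$ computing $\rH^*(H_A,M)$, which yields the desired isomorphism. Equivalently, one may package this as the degeneration, onto the row $q=0$, of the spectral sequence $\rH^p(H_A,\rH^q(G_A,\cO_{G_A}\otimes_k M))\Rightarrow\rH^{p+q}(A,\cL_A(M))$.

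It remains to check that the isomorphism so obtained coincides with the map of the statement. For $i=0$ that map is the isomorphism $\Hom_{H_A}(V,W)\cong\Hom_A(\cL_A(V),\cL_A(W))$ of Theorem~\ref{thm:equiv}. Since $\cL_A$ is exact, both $\Ext^{\bullet}_{H_A}(V,-)$ and $\Ext^{\bullet}_A(\cL_A(V),\cL_A(-))$ are $\delta$-functors on $H_A\tmod$ and the map of the statement is a morphism of $\delta$-functors; tracking the identifications above shows the isomorphism constructed here is also such a morphism extending the same isomorphism in degree $0$, so the two agree by universality of $\Ext^{\bullet}_{H_A}(V,-)$, and the map of the statement is an isomorphism. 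The one genuine difficulty is technical: $G_A$ and $H_A$ are not of finite type, so one must ensure that fpqc descent along $f_A$, the bar resolution above, and the identification of rational cohomology of $H_A$ with Hochschild cohomology remain valid in this generality; the facts that make this go through — $f_A$ affine (whence the bar terms are $\Gamma(A,-)$-acyclic once Proposition~\ref{prop:van} is invoked) and $\cO(G_A)=k$ — are both already established.
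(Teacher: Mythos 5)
Your proposal is correct, and at its core it is the same argument as the paper's: both reduce, via duals and tensor products, to comparing $\rH^i(H_A,M)$ with $\rH^i(A,\cL_A(M))$ for $M = V^{\vee}\otimes_k W$, and both make that comparison through the torsor $f_A : G_A \to A$ using exactly the two inputs you isolate, namely $\rH^{>0}(G_A,\cO_{G_A})=0$ (Proposition~\ref{prop:van}) and $\cO(G_A)=k$. The difference is in the homological packaging. The paper extends $\cL_A$ to an exact functor $H_A\tMod \to \QCoh_A$ via Ind-categories, shows it sends injective modules to sheaves acyclic for $\rH^{>0}(A,-)$ by proving $\cL_A(\cO(H_A)) \simeq (f_A)_*(\cO_{G_A})$ (via \cite[I.5.18]{Jantzen} and the limit over anti-affine extensions), and then invokes a degenerate Grothendieck spectral sequence; you instead resolve $\cL_A(M)$ by the explicit Amitsur/bar complex of the fpqc torsor and identify its global sections with the Hochschild complex. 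These mechanisms coincide: your resolution is the image under $\cL_A$ of the standard cobar resolution of $M$, whose terms are injective $H_A$-modules built from copies of $\cO(H_A)$, and the trivialization of the pullback along $f_A$ is the same fact as the isomorphism $\cL_A(\cO(H_A)) \simeq (f_A)_*(\cO_{G_A})$ used in the paper. Your version is more hands-on, the paper's more formal; and since the paper is itself brief about why the resulting isomorphism is ``the map'' of the statement, your $\delta$-functor compatibility remark is, if anything, more careful (it is cleanest to run it in $H_A\tMod$ and $\QCoh_A$, where effaceability is available). One small correction: the vector spaces $N = \cO(H_A)^{\otimes q}\otimes_k M$ in your acyclicity step are infinite-dimensional, since $H_A$ is not algebraic, so you should not restrict to finite-dimensional $N$; the identification $\rH^p(A,(f_A)_*\cO_{G_A}\otimes_k N) \simeq \rH^p(G_A,\cO_{G_A})\otimes_k N$ still holds because cohomology on the noetherian scheme $A$ commutes with filtered direct limits --- the same point already used in the proof of Proposition~\ref{prop:van}.
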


\begin{proof}
Denoting by $V^{\vee}$ the dual $H_A$-module of $V$, 
the statement can be reformulated as follows: the map
\[ \rH^i(H_A,V^{\vee} \otimes_k W) \longrightarrow 
\rH^i(A, \cL_A(V^{\vee} \otimes_k W)) \]
is an isomorphism for any $i \geq 0$ (use
\cite[I.4.2, I.4.4]{Jantzen} and the compatibility 
of $\cL_A$ with duals and tensor products). This holds for 
$i = 0$ in view of Theorem \ref{thm:equiv}. Thus, 
it suffices to extend $\cL_A$ to an exact functor
from $H_A$-modules to quasi-coherent sheaves on $A$, 
which takes injective modules to acyclic sheaves; then
the statement will follow from a degenerate case of
Grothendieck's spectral sequence. 

We first check that $\cL_A$ extends uniquely to an exact 
functor, commuting with filtered direct limits, 
from the category $H_A$-Mod of all $H_A$-modules 
(not necessarily finite-dimensional), to the category
$\QCoh_A$ of quasi-coherent sheaves on $A$. Indeed, 
$H_A$-Mod is a Grothendieck category and its noetherian
objects are exactly those of $H_A$-mod; as a consequence,
$H_A$-Mod is equivalent to the ind category $\Ind(H_A\tmod)$. 
Likewise, $\QCoh_A$ is equivalent to $\Ind(\Coh_A)$. 
So the desired assertion follows from \cite[6.1.9, 8.6.8]{KS}. 

We still denote the extended functor by
\[ \cL_A : H_A\tMod \longrightarrow \QCoh_A. \]
We now check that $\cL_A$ takes injectives to acyclics. 
By \cite[I.3.10]{Jantzen}, every injective $H_A$-module 
is a direct summand of a direct sum of copies of $\cO(H_A)$; 
hence it suffices to show that 
$\rH^i(A,\cL_A(\cO(H_A))) = 0$ for all $i \geq 1$.
In view of (the proof of) Proposition \ref{prop:van},
it suffices in turn to check that
\begin{equation}\label{eqn:direct}
\cL_A(\cO(H_A)) \simeq f_*(\cO_{G_A}),
\end{equation}
where $H_A$ acts on $\cO(H_A)$ via the regular representation.

Recall that $\cO(H_A) = \lim_{\leftarrow} \cO(H)$ and
$f_*(\cO_{G_A}) = \lim_{\rightarrow} f'_*(\cO_G)$, 
where both limits run over all anti-affine extensions 
$0 \longrightarrow H \longrightarrow G
\stackrel{f}{\longrightarrow} A \longrightarrow 0$. Also, recall
that $\cL_A(V)$ is defined as the associated sheaf 
$\cL_{G/H}(V)$ for any finite-dimensional $H$-module
$V$, and hence for any $H$-module. 
By \cite[I.5.18]{Jantzen}, there are compatible isomorphisms 
\[ \cL_A(\cO(H)) \stackrel{\simeq}{\longrightarrow}
f'_*(\cO_G). \]
This yields the desired isomorphism (\ref{eqn:direct}).
\end{proof}

\begin{corollary}\label{cor:ext}
The subcategory $\HVec_A $ of $\Vec_A$ is stable 
under extensions.
\end{corollary}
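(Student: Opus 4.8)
The plan is to deduce this directly from Theorems~\ref{thm:equiv} and~\ref{thm:ext}. Let
\[ 0 \longrightarrow E' \longrightarrow E \longrightarrow E'' \longrightarrow 0 \]
be a short exact sequence in $\Coh_A$ with $E', E'' \in \HVec_A$; the goal is to show $E \in \HVec_A$. By Theorem~\ref{thm:equiv}, write $E' = \cL_A(W)$ and $E'' = \cL_A(V)$ for finite-dimensional $H_A$-modules $V, W$. The displayed sequence then represents a class in $\Ext^1_A(E'', E') = \Ext^1_A(\cL_A(V), \cL_A(W))$.

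Now apply Theorem~\ref{thm:ext} with $i = 1$: the natural map $\Ext^1_{H_A}(V,W) \to \Ext^1_A(\cL_A(V), \cL_A(W))$, which is the comparison map induced by the exact functor $\cL_A$ on extensions, is an isomorphism, hence in particular surjective. So there is a short exact sequence of $H_A$-modules
\[ 0 \longrightarrow W \longrightarrow \widetilde{V} \longrightarrow V \longrightarrow 0 \]
whose image under $\cL_A$ represents the same class in $\Ext^1_A(\cL_A(V), \cL_A(W))$. Since two extensions with equal class are equivalent — and equivalent extensions have isomorphic middle terms — we obtain $E \simeq \cL_A(\widetilde{V})$. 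As $\cL_A(\widetilde{V}) \in \HVec_A$, this shows $E \in \HVec_A$ (and, \emph{en passant}, that $E$ is locally free).

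I expect no serious obstacle here: the statement is a formal consequence of $\cL_A$ being an equivalence that identifies first extension groups. The only points deserving a word of care are that the isomorphism of Theorem~\ref{thm:ext} is indeed the map induced by $\cL_A$, so that its surjectivity genuinely lifts the given $\cO_A$-module extension to an $H_A$-module extension, and that in an abelian category the middle term of a short exact sequence is determined up to isomorphism by its class together with the two outer terms. Alternatively, one could phrase the argument as the observation that, by Theorem~\ref{thm:ext} for $i=1$, the fully faithful exact embedding $\cL_A \colon H_A\tmod \to \Coh_A$ identifies $\Ext^1$ computed in $H_A\tmod$ with $\Ext^1$ computed in $\Coh_A$, which is exactly the assertion that $\HVec_A$ is closed under extensions in $\Coh_A$.
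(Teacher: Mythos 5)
Your proof is correct and is exactly the intended argument: the paper states this corollary without a written proof precisely because it is the immediate consequence of the surjectivity of $\Ext^1_{H_A}(V,W)\to\Ext^1_A(\cL_A(V),\cL_A(W))$ from Theorem~\ref{thm:ext}, combined with the fact that extensions with the same Yoneda class have isomorphic middle terms. Your two "points of care" (that the isomorphism is the one induced by $\cL_A$, and the determination of the middle term by the class) are the right ones and both hold.
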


\section{Representations of commutative affine group schemes}
\label{sec:rep}

Throughout this section, we consider linear representations 
of a fixed commutative affine $k$-group scheme $H$. 
We use the book \cite{Jantzen} as a general reference for
representation theory. 

\subsection{Irreducible representations}
\label{subsec:irr}

The aim of this subsection is to construct the irreducible 
representations of $H$ (Proposition \ref{prop:irr}), and classify 
them up to equivalence (Proposition \ref{prop:gal}).

Define a \emph{character} of $H$ as a morphism of
$\bar{k}$-group schemes
\[ \chi : H_{\bar{k}} \longrightarrow \bG_{m,\bar{k}}. \]
The characters form an abelian group that we denote by 
$\rX(H)$. Every such character $\chi$ may be viewed as 
an element of 
$\cO(H_{\bar{k}}) = \cO(H) \otimes_k \, \bar{k}$. 
Thus, $\chi$ is defined over a smallest finite subextension 
$K/k$ of $\bar{k}/k$: the \emph{field of definition}, 
$K = k(\chi)$ (generated by the coordinates of $\chi$ in 
a $\bar{k}$-basis of $\cO(H_{\bar{k}})$ consisting of elements 
of $\cO(H)$). So $\chi$ defines a morphism of $K$-group 
schemes $H_K \to \bG_{m,K},$ or equivalently a morphism 
of $k$-group schemes
\begin{equation}\label{eqn:irr} 
\rho(\chi) : H \longrightarrow \rR_{K/k}(\bG_{m,K}), 
\end{equation}
where $\rR_{K/k}$ denotes the Weil restriction of scalars:
for any scheme $S$, the group of $S$-points
$\rR_{K/k}(\bG_{m,K})(S)$ is the unit group of
the $K$-algebra $\cO(S) \otimes_k K$
(see \cite[A.5]{CGP} for generalities on Weil restriction). 
Thus, we may view $\rho(\chi)$ as a linear representation 
of $H$ in the $k$-vector space $K$, in which 
$\rR_{K/k}(\bG_{m,K})$ acts by multiplication.

\begin{proposition}\label{prop:irr}

\begin{enumerate}

\item[{\rm (i)}] The representation $\rho(\chi)$ 
is irreducible, with commutant algebra $k(\chi)$.

\item[{\rm (ii)}] Every irreducible representation 
of $H$ is isomorphic to $\rho(\chi)$ for some
$\chi \in \rX(H)$.

\end{enumerate}

\end{proposition}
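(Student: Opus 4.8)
The plan is to prove both statements by reducing to the structure theory of commutative affine group schemes over $\kb$, where characters separate points, and then tracking the field of definition $K = k(\chi)$ through Weil restriction.

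\medskip

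For part (i), the key observation is that after base change to $K$, the representation $\rho(\chi)_K$ of $H_K$ in the $K$-vector space $K \otimes_k K$ splits according to the idempotents of $K \otimes_k K$; on each factor, $H_K$ acts through the Galois conjugates of $\chi$, i.e., through one-dimensional representations. More precisely, first I would show that $\End_{H}(\rho(\chi)) = k(\chi)$: an $H$-equivariant endomorphism of the $k$-vector space $K$ commuting with the action of $\rR_{K/k}(\bG_{m,K})$ (which acts by multiplication by all of $K$ after enough base change) must be multiplication by an element of $K$; conversely every such multiplication is $H$-equivariant since $H$ acts through the \emph{commutative} group $\rR_{K/k}(\bG_{m,K})$. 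Hence the commutant is $K = k(\chi)$, which is a field; by Schur's lemma this already forces $\rho(\chi)$ to be indecomposable. To upgrade indecomposability to irreducibility, I would argue that any nonzero $H$-submodule $W \subset K$ is, after base change to $\kb$, a sum of the $\chi^{\sigma}$-weight spaces for a Galois-stable subset of the conjugates of $\chi$; but the conjugates form a single Galois orbit (since $K/k$ is generated by the coordinates of $\chi$, the Galois group acts transitively on the embeddings $K \hookrightarrow \kb$), so $W_{\kb}$ is everything, whence $W = K$.

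\medskip

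For part (ii), let $V$ be an irreducible $H$-module. Since $H$ is commutative and affine, $H_{\kb}$ has a decomposition into a unipotent part and a part of multiplicative type, and on $V_{\kb}$ the maximal multiplicative-type quotient acts semisimply by characters while the unipotent part acts by unipotent operators; because $H_{\kb}$ is commutative these actions commute, so $V_{\kb}$ contains a common eigenvector, i.e., a nonzero $\chi$-weight space $V_{\kb}^{\chi}$ for some $\chi \in \rX(H)$. The span over all Galois conjugates $\bigoplus_{\sigma} V_{\kb}^{\chi^{\sigma}}$ descends to a nonzero $H$-submodule of $V$, hence equals $V$ by irreducibility, and one checks its field of definition and dimension match those of $\rho(\chi)$. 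Concretely, I would construct an explicit nonzero $H$-morphism $\rho(\chi) \to V$ by composing the inclusion $K \hookrightarrow \kb$ (a map of $k$-vector spaces underlying $\rho(\chi)$) suitably, or dually a nonzero map $V \to \rho(\chi)$ using the projection onto $V_{\kb}^{\chi}$ together with Galois descent; since both modules are irreducible (part (i)), Schur forces this map to be an isomorphism.

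\medskip

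I expect the main obstacle to be part (ii): producing a common eigenvector for $H_{\kb}$ acting on $V_{\kb}$ in a way that is clean over an imperfect base field, since the unipotent part of $H$ need not be a vector group and $H$ itself need not split as a product of its multiplicative and unipotent parts (Lemma \ref{lem:H_A}(iii)). The safe route is to work entirely over $\kb$ where the classification of $H_{\kb}$-modules is standard (\cite[I.3.11]{Jantzen}), extract the set of characters appearing in $V_{\kb}$, observe it is a single $\Gamma$-orbit because $V$ is $k$-irreducible, and then descend; verifying that the resulting descended module is exactly $\rho(\chi)$ (rather than merely isogenous or of the same dimension) requires carefully identifying $k(\chi)$ with the residue field of the corresponding closed point and matching the $\rR_{K/k}$-action, which is the delicate bookkeeping step.
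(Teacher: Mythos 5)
There is a genuine gap, and it sits exactly where the paper is doing its real work: the case where $k(\chi)/k$ is inseparable (so $k$ imperfect), which Example \ref{ex: blocks} shows really occurs. In your argument for (i), the step ``any nonzero $H$-submodule $W \subset K$ is, after base change to $\kb$, a sum of the $\chi^{\sigma}$-weight spaces'' is false in that case: when $K/k$ is inseparable, $\rho(\chi)_{\kb}$ is \emph{not} semisimple (cf.\ Lemma \ref{lem:sep}(v)), so $W_{\kb}$ is not determined by which weight spaces it meets; worse, if $K/k$ is purely inseparable then all Galois conjugates of $\chi$ coincide, the single generalized weight space is all of $K_{\kb}$ (of dimension $[K:k]>1$), and the transitivity-of-Galois argument yields nothing. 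Your commutant computation also conflates commuting with the image of $H$ with commuting with all of $\rR_{K/k}(\bG_{m,K})$: the inclusion $K \subset \End_H(\rho(\chi))$ is easy, but the reverse inclusion is not, and in the paper it is deduced only \emph{after} irreducibility, from the two inequalities $\dim_k D \geq [K:k]$ (as $D \supset K$) and $\dim_k D \leq [K:k]$ (as $K$ is a $D$-module). In (ii), the descent of $\bigoplus_{\sigma} V_{\kb}^{\chi^{\sigma}}$ to a $k$-submodule of $V$ is not justified: $\kb/k$ is not a Galois extension when $k$ is imperfect, and invariance under $\Aut(\kb/k)$ only descends a subspace to the purely inseparable closure of $k$, not to $k$ itself. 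You flag this as ``delicate bookkeeping,'' but it is precisely the point that needs a new idea, not bookkeeping.

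The paper's proof avoids all base change and descent. For an irreducible $\rho: H \to \GL(V)$ it forms the coalgebra $C \subset \cO(H)$ spanned by the matrix coefficients of $\rho^{\vee}$; commutativity of $H$ makes $C^{\vee}$ a finite-dimensional \emph{commutative} $k$-algebra acting on $V$ by $H$-equivariant endomorphisms, and simplicity forces $V \simeq K$ for a quotient field $K$ of $C^{\vee}$, which immediately produces the factorization $H \to \rR_{K/k}(\bG_{m,K})$ and the character $\chi$ with $k(\chi)=K$. For (i), the content is to show that for $V = K = k(\chi)$ the algebra $C^{\vee}$ is all of $K$ rather than a proper subfield $L$; this is done by an immersion argument into $\rR_{L/k}(\bA^1_L)$ inside $\rR_{K/k}(\bA^1_K)$, which would force $\chi$ to be defined over $L$. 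If you want to salvage your approach, you would need a purely inseparable descent mechanism (or an argument entirely over $k$, as in the paper); as written, the proposal proves the proposition only when $k(\chi)$ is separable over $k$.
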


\begin{proof}
We argue as in the proof of \cite[9.4]{Waterhouse}.
Let $\rho: H \to \GL(V)$ be a finite-dimensional 
representation. Consider the dual representation 
$\rho^{\vee}$ of $H$ in $V^{\vee}$, and the corresponding 
comodule map 
\[ \Delta_{\rho^{\vee}} : 
V^{\vee} \longrightarrow V^{\vee} \otimes_k \cO(H). \]
Since $H$ is commutative, $\Delta_{\rho^{\vee}}$ 
is equivariant for the $H$-representation on $V^{\vee}$, 
and that on $V^{\vee} \otimes_k \cO(H)$ via its action on 
$V^{\vee}$. Also, the matrix coefficients of $\rho^{\vee}$ 
span a subspace $C = C(\rho^{\vee}) \subset \cO(H)$ 
which is stable under the comultiplication 
$\Delta : \cO(H) \to \cO(H) \otimes_k \cO(H)$; hence
$C$ is a finite-dimensional sub-coalgebra. Note that 
$C$ is co-commutative and has a co-unit, since these
properties hold for $\cO(H)$. Thus, the dual vector
space $C^{\vee}$ is a finite-dimensional commutative
algebra with unit, and $V$ is a $C^{\vee}$-module. 
Moreover, $C^{\vee}$ acts on $V$ by $H$-invariant 
endomorphisms, since the transpose map 
$V^{\vee} \to V^{\vee} \otimes_k C$ is equivariant for 
the $H$-representations as above. Also, by 
\cite[II.2.2.4]{DG}, a subspace $W \subset V^{\vee}$ 
is $H$-stable if and only if 
$\Delta_{\rho^{\vee}}(W) \subset W \otimes_k C$. 

We now assume that $\rho$ is irreducible; then so is 
$\rho^{\vee}$, and hence $V^{\vee}$ is a simple $C$-comodule. 
Equivalently, $V$ is a simple $C^{\vee}$-module. 
This yields an isomorphism of $C^{\vee}$-modules 
$V \simeq K$ for some quotient field $K$ of $C^{\vee}$; 
moreover, $H$ acts linearly on $K$ and commutes with the
$K$-action by multiplication. Thus, $\rho$ is identified
with a morphism of $k$-group schemes
$H \to \rR_{K/k}(\bG_{m,K})$, which corresponds
to a morphism of $K$-group schemes 
$H_K \to \bG_{m,K}$, or equivalently to a character 
$\chi \in \rX(H)$, defined over $K$.
If $\chi$ is defined over a smaller subfield $L$ 
containing $k$, then the corresponding morphism 
of $L$-group schemes $H_L \to \bG_{m,L}$
yields a factorization of $\rho$ through 
a morphism $H \to \rR_{L/k}(\bG_{m,L})$.
In particular, $H$ stabilizes the subspace 
$L$ of $K$. As $\rho$ is irreducible, it follows
that $L = K$, i.e., $K = k(\chi)$ and
$\rho = \rho(\chi)$. This shows (ii).

To prove (i), we may assume that $\rho(\chi)$
is a monomorphism, and hence view $H$ as
a $k$-subgroup scheme of $\rR_{K/k}(\bG_{m,K})$,
where $K = k(\chi)$. For the representation
of $\rR_{K/k}(\bG_{m,K})$ in $K$, the co-algebra
of matrix coefficients is easily checked to
be $K^{\vee}$ (the $k$-linear dual of $K$).
As a consequence, the co-algebra $C$ of matrix
coefficients of $\rho(\chi)$ is a quotient
of $K^{\vee}$; equivalently, $C^{\vee}$
is a $k$-subalgebra of $K$, and hence a subfield, 
say $L$. Moreover, we have a commutative
diagram of morphisms of $k$-algebras
\[ \xymatrix{
\Sym(K^{\vee}) \ar[r] \ar[d] 
& \cO(\rR_{K/k}(\bG_{m,K})) \ar[d] \\
\Sym(L^{\vee}) \ar[r]  & \cO(H), \\
} \]
which translates into a commutative diagram of
morphisms of $k$-schemes
\[ \xymatrix{
H \ar[r] \ar[d] &  \rR_{L/k}(\bA^1_L) \ar[d] \\
\rR_{K/k}(\bG_{m,K}) \ar[r]  & \rR_{K/k}(\bA^1_K), \\
} \]
where the vertical arrows and the bottom
horizontal arrow are immersions. Hence so is 
the top horizontal arrow. As a consequence,
$H$ acts linearly on $K$ via a morphism
to $R_{L/k}(\bG_{m,L})$, and $\chi$
is defined over $L$. Thus, $L = K = C^{\vee}$. 
Using again the characterization of the 
$H$-stable subspaces of $V^{\vee}$ in terms 
of the comodule map, it follows that $V$ 
is irreducible. In particular, its commutant
algebra is a division algebra $D$ of finite 
dimension over $k$, say $d$. We have 
$d \geq [K:k]$ as $D \supset K$, and
$d \leq [K:k]$ as $K$ is a $D$-module.
So equality holds, and $D = K$.
\end{proof}

Next, consider the absolute Galois group 
\[ \Gamma := \Gal(k_s/k) = \Aut(\bar{k}/k) ,\]
and its continuous action on the character group 
$\rX(H)$; denote this action by 
$(\gamma,\chi) \mapsto \gamma \cdot \chi$. 
Note that $k(\gamma \cdot \chi) = \gamma k(\chi)$ 
for any $\gamma \in \Gamma$ and $\chi \in \rX(H)$. 

Also, recall that $H$ lies in a unique exact sequence of 
commutative affine group schemes
\begin{equation}\label{eqn:H}
0 \longrightarrow M \longrightarrow H \longrightarrow U
\longrightarrow 0, 
\end{equation}
where $M$ is of multiplicative type and $U$ is unipotent;
the formation of $M$, $U$ commutes with base change 
under field extensions. For any such extension $k'/k$ 
with $k'$ perfect, the induced exact sequence 
\[ 0 \longrightarrow M_{k'} \longrightarrow H_{k'} 
\longrightarrow U_{k'} \longrightarrow 0 \]
has a unique splitting; in particular, (\ref{eqn:H}) splits
over $\bar{k}$. Since every character of $U$
is trivial, this yields a $\Gamma$-equivariant isomorphism 
\[ \rX(H) \longrightarrow \rX(M), 
\quad \chi \longmapsto \chi \vert_M,\]
where the $\Gamma$-group $\rX(M)$ may be identified
with the Cartier dual $\rD(M)$.
Finally, recall from \cite[II.3.3.7, II.3.4.3]{DG} that 
every representation of $M$ is completely reducible.

\begin{lemma}\label{lem:def}

Let $\chi \in \rX(H)$ and denote by $\Gamma_{\chi}$
its stabilizer in $\Gamma$. Set $K := k(\chi)$, 
$\eta := \chi \vert_M$ and $L := k(\eta)$.

\begin{enumerate}

\item[{\rm (i)}] $L = K \cap k_s$.

\item[{\rm (ii)}] $L$ is the fixed
subfield of $\Gamma_{\chi}$ in $k_s$.

\item[{\rm (iii)}] 
$ \res_M^H \, \rho(\chi) \simeq [K : L] \, \rho(\eta)$.

\end{enumerate}

\end{lemma}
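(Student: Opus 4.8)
The plan is to reduce (i) and (ii) to Galois descent for the extension $k_s/k$ via the $\Gamma$-equivariant isomorphism $\rX(H) \stackrel{\sim}{\to} \rX(M)$, $\chi \mapsto \chi\vert_M$, recorded above, and to prove (iii) by unwinding the definition of $\rho(\chi)$ in terms of Weil restriction along the tower $k \subseteq L \subseteq K$.

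First I would establish two easy containments. Restriction along $M \hookrightarrow H$ induces a $k$-algebra homomorphism $\cO(H) \to \cO(M)$ sending, after base change, $\chi$ to $\eta$; so $\chi \in \cO(H) \otimes_k K$ forces $\eta \in \cO(M) \otimes_k K$, whence $L = k(\eta) \subseteq K$. Since $M$ is of multiplicative type, all its characters are defined over $k_s$, so $L \subseteq k_s$, and therefore $L \subseteq K \cap k_s$. Next, writing $\chi = \sum_i c_i \otimes a_i$ with the $c_i \in \cO(H)$ linearly independent over $k$, one sees that $\gamma \cdot \chi = \chi$ holds exactly when $\gamma$ fixes every coordinate $a_i$, i.e.\ when $\gamma$ fixes $K = k(\{a_i\})$ pointwise; similarly $\gamma \cdot \eta = \eta$ holds exactly when $\gamma$ fixes $L$ pointwise, i.e.\ when $\gamma \in \Gal(k_s/L)$ (using $L \subseteq k_s$). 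As $\chi \mapsto \eta$ is an injective $\Gamma$-map, $\Gamma_\chi = \Gamma_\eta = \Gal(k_s/L)$, whose fixed subfield in $k_s$ is $L$; this is (ii). For (i) there remains the inclusion $K \cap k_s \subseteq L$: each $\gamma \in \Gamma_\chi$ fixes $K$ pointwise, hence fixes $K \cap k_s$ pointwise, so $K \cap k_s \subseteq (k_s)^{\Gamma_\chi} = L$ by (ii). Combined with $L \subseteq K \cap k_s$, this gives $L = K \cap k_s$.

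For (iii) I would argue as follows. By construction $\rho(\chi): H \to \rR_{K/k}(\bG_{m,K})$ is the morphism corresponding, under the adjunction between Weil restriction $\rR_{K/k}$ and base change $(-)_K$, to $\chi_K: H_K \to \bG_{m,K}$, and $\rR_{K/k}(\bG_{m,K})$ acts on the $k$-vector space $K$ by multiplication. Restricting to $M$ and using functoriality of the adjunction, $\res_M^H \, \rho(\chi)$ corresponds to $(\chi\vert_M)_K : M_K \to \bG_{m,K}$; and as $\chi\vert_M = \eta$ is already defined over $L$, this is the base change along $L \hookrightarrow K$ of $\eta : M_L \to \bG_{m,L}$. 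Hence $\res_M^H \, \rho(\chi)$ factors as $M \stackrel{\rho(\eta)}{\longrightarrow} \rR_{L/k}(\bG_{m,L}) \longrightarrow \rR_{K/k}(\bG_{m,K})$, the second arrow being induced by $L \hookrightarrow K$; concretely, $M$ acts on $K$ by $L$-linear multiplication through $\eta$. Since $K$ is a free $L$-module of rank $[K:L]$, an $L$-basis identifies this representation with $[K:L]$ copies of $\rho(\eta)$, as claimed.

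I expect the main obstacle to be the bookkeeping in (iii): verifying the compatibility of $\res_M^H \, \rho(\chi)$ with Weil restriction along $k \subseteq L \subseteq K$. A concrete way to pin it down is to evaluate on $R$-points for a $k$-algebra $R$, where it reduces to the identification $K \otimes_k R \simeq (L \otimes_k R) \otimes_L K$ of $(L \otimes_k R)$-modules, compatible with the $\eta$-action of $M(R)$. Parts (i) and (ii), by contrast, are routine once the $\Gamma$-equivariant isomorphism $\rX(H) \simeq \rX(M)$ is invoked.
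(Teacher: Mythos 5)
Your proposal is correct, and parts (ii) and (iii) run essentially as in the paper: (ii) is the same coordinate/stabilizer computation (using that $\eta$ is defined over $k_s$ because $M$ is of multiplicative type), and (iii) is exactly the unwinding of the definition of $\rho(\chi)$ via Weil restriction along $k \subseteq L \subseteq K$ that the paper leaves as ``a direct consequence of the definition.'' Where you genuinely diverge is (i), and your route is both valid and simpler: you prove (ii) first and deduce (i) from the identity $\Gamma_\chi=\Gamma_\eta=\Gal(k_s/L)$ (which rests on the $\Gamma$-equivariant injection $\rX(H)\hookrightarrow\rX(M)$ stated just before the lemma), together with the observation that every element of $\Gamma_\chi$ fixes $K$ pointwise, so $K\cap k_s$ lies in the fixed field $L$. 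The paper instead proves the equivalent statement that $K/L$ is purely inseparable ($K^{p^n}\subseteq L$ for $n\gg 0$), treating $p=0$ separately via $H\simeq M\times U$ and, for $p>0$, using the exact sequence $0\to\Hom_{\tcL}(H,\bG_m)\to\Hom_{\tcL}(M,\bG_m)\to\Ext^1_{\tcL}(U,\bG_m)$ and the fact that $\Ext^1_{\tcL}(U',\bG_m)$ is killed by a power of $p$ for unipotent algebraic $U'$, so that $p^n\eta$ already extends to a character of $H$ over $L$. What each buys: your argument is uniform in all characteristics, purely Galois-theoretic, and reverses the logical order (deriving (i) from the stabilizer computation behind (ii)); the paper's argument yields the slightly more explicit fact that a $p$-power multiple of $\chi$ is defined over $L$, an extension-theoretic statement in the spirit of the later pushout constructions, though nothing beyond (i) as stated is needed elsewhere. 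Your flagged ``main obstacle'' in (iii) is handled adequately by the $R$-points identification $K\otimes_k R\simeq (L\otimes_k R)\otimes_L K$ you indicate.
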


\begin{proof}
(i) If $p = 0$ then the assertion follows readily from the
isomorphism $H \simeq M \times U$. Thus, we may assume 
$p > 0$. 

Clearly, $\eta$ is defined over $K$; that is,
$L \subset K$. Also, as $M$ is of multiplicative 
type, its characters are all defined over $k_s$; in particular,
$L$ is separable over $k$. To complete the proof,
it suffices to show that $K^{p^n} \subset L$
for $n \gg 0$. For this, we may replace $k$ with $L$,
and hence assume that $\eta \in \Hom_{\tcL}(M,\bG_m)$.
Since $\Hom_{\tcL}(U,\bG_m) = 0$, we have an exact sequence
\[ 0 \longrightarrow \Hom_{\tcL}(H,\bG_m)
\longrightarrow \Hom_{\tcL}(M,\bG_m)
\longrightarrow \Ext^1_{\tcL}(U,\bG_m). \]
Also, $U$ is the filtered inverse limit of its algebraic
quotients $U'$; in view of \cite[V.2.3.9]{DG}, this yields
\[ \Ext^1_{\tcL}(U,\bG_m) \simeq 
\lim_{\rightarrow} \Ext^1_{\tcL}(U',\bG_m). \]
Since each $U'$ is a unipotent algebraic group,
$\Ext^1_{\tcL}(U',\bG_m)$ is killed by a power of $p$.
Thus, so is every element of $\Ext^1_{\tcL}(U,\bG_m)$;
hence $p^n \eta$ extends to a morphism
$H \to \bG_m$ for $n \gg 0$. This implies the assertion. 

(ii) We view $\eta$ as an element of 
$\cO(M_{k_s}) = \cO(M) \otimes_k \, k_s$. Write
accordingly 
$\eta = \sum_{i=1}^n f_i \otimes z_i$,
where $f_1,\ldots,f_n \in \cO(M)$ are linearly
independent over $k$ and $z_1,\ldots,z_n \in k_s$.
Then $k(\eta) = k(z_1,\ldots,z_n)$.
Also, the stabilizer in $\Gamma$ of $\chi$
equals that of $\eta$, and hence consists
of the $\gamma$ such that $\gamma z_i = z_i$
for $i = 1,\ldots,n$. Thus, 
$\gamma \in \Gamma_{\chi}$ if and only if
$\gamma$ fixes $L$ pointwise.
So the assertion follows from Galois theory.

(iii) This is a direct consequence of the definition 
of $\rho(\chi)$.
\end{proof}

\begin{proposition}\label{prop:gal}
Let $\chi,\chi' \in \rX(H)$. Then we have
$\rho(\chi') \simeq \rho(\chi)$ if and only if
$\chi' \in \Gamma \cdot \chi$.
\end{proposition}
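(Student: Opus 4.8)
\textbf{Proof proposal for Proposition \ref{prop:gal}.}

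The plan is to reduce the statement to an assertion purely about characters of the group scheme $M$ of multiplicative type, where Galois descent makes everything transparent. First I would dispose of the ``if'' direction: given $\gamma \in \Gamma$, the Galois action carries the $k$-scheme morphism $\rho(\chi) : H \to \rR_{K/k}(\bG_{m,K})$ to $\rho(\gamma \cdot \chi) : H \to \rR_{\gamma K/k}(\bG_{m,\gamma K})$, and conjugation by $\gamma$ identifies $K = k(\chi)$ with $\gamma K = k(\gamma \cdot \chi)$ compatibly with the $H$-actions; concretely, the $k$-linear isomorphism $K \to \gamma K$ induced by $\gamma$ intertwines the two representations, so $\rho(\chi) \simeq \rho(\gamma \cdot \chi)$. (One should note that $\rho(\chi)$ only depends on $\chi$ up to the choice of the $\bar k$-point, and the construction is Galois-equivariant by inspection of the definition in \eqref{eqn:irr}.)

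For the ``only if'' direction, suppose $\rho(\chi') \simeq \rho(\chi)$. Restricting to $M$ and using Lemma \ref{lem:def}(iii), we get $[K:L]\,\rho(\eta) \simeq [K':L']\,\rho(\eta')$ as $M$-representations, where $\eta = \chi\vert_M$, $\eta' = \chi'\vert_M$, $L = k(\eta)$, $L' = k(\eta')$. Since every representation of $M$ is completely reducible and $\rho(\eta)$, $\rho(\eta')$ are irreducible, the Krull--Schmidt theorem forces $\rho(\eta) \simeq \rho(\eta')$, whence $L = L'$ (the common commutant field) and the multiplicities agree. Now I need that $\rho(\eta) \simeq \rho(\eta')$ implies $\eta' \in \Gamma \cdot \eta$ in $\rX(M)$: decomposing $\rho(\eta)_{k_s}$, the character $\eta$ of $M$ becomes, after the splitting of \eqref{eqn:H} over $k_s$, literally a character of $M_{k_s}$, and the set of characters occurring as summands of $\rho(\eta)_{k_s}$ is exactly the $\Gamma$-orbit of $\eta$ (this is the standard description of Weil restriction of a torus character, or can be read off from $\rX(M) \simeq \rD(M)$ being a discrete $\Gamma$-module and $\cO(M) = \bigoplus$ of the orbit sums). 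So $\rho(\eta) \simeq \rho(\eta')$ yields $\Gamma \cdot \eta = \Gamma \cdot \eta'$, hence $\eta' = \gamma \cdot \eta$ for some $\gamma \in \Gamma$. Replacing $\chi'$ by $\gamma^{-1}\cdot \chi'$ (using the ``if'' direction already proved), we may assume $\eta' = \eta$, i.e.\ $\chi$ and $\chi'$ have the same restriction to $M$.

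It remains to show: if $\chi, \chi' \in \rX(H)$ agree on $M$, then $\rho(\chi) \simeq \rho(\chi')$ — indeed I expect one can even conclude $\chi = \chi'$, which is the crux and the main obstacle. The point is that the restriction map $\rX(H) \to \rX(M)$ is injective: its kernel consists of characters of $H$ trivial on $M$, i.e.\ characters factoring through $U$, and $\Hom(U_{\bar k}, \bG_{m,\bar k}) = 0$ since $U$ is unipotent. Hence $\chi = \chi'$ outright, and certainly $\rho(\chi) \simeq \rho(\chi')$. Assembling the two directions completes the proof. The only genuinely delicate point is the claim that the $k_s$-characters appearing in $\rho(\eta)_{k_s}$ form a single $\Gamma$-orbit and that irreducibility of $\rho(\eta)$ forces $L = L'$; both follow from the theory of diagonalizable group schemes and the Galois-module structure of $\rX(M)$, already invoked in Lemma \ref{lem:def}, so no new ingredient is needed.
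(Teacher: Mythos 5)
Your proof is correct and follows essentially the same route as the paper's: restrict to $M$, use complete reducibility to match the simple constituents, decompose over $k_s$ so that the weights occurring are exactly the $\Gamma$-orbits of $\chi\vert_M$ and $\chi'\vert_M$, and conclude via the injectivity of the restriction $\rX(H)\to\rX(M)$ (which the paper had already recorded as a $\Gamma$-equivariant isomorphism, characters of $U$ being trivial). The only cosmetic differences are that the paper passes through the isomorphism of commutant fields $k(\chi)\simeq k(\chi')$ from Proposition \ref{prop:irr} where you invoke Krull--Schmidt, and in the ``if'' direction it realizes the intertwiner inside the Weil restriction of a Galois-stable finite extension containing $k(\chi)$ --- the same conjugation argument you sketch.
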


\begin{proof}
Assume that $\chi' = \gamma \cdot \chi$ for some 
$\gamma \in \Gamma$. We may choose
a finite subextension $K'/k$ which is stable under
$\Gamma$ and contains $k(\chi)$. Then
$\Gamma$ acts by automorphisms on the
$k$-group scheme $\rR_{K'/k}(\bG_{m,K'})$,
and $\gamma$ restricts to an isomorphism 
\[ \rR_{k(\chi)/k}(\bG_{m,k(\chi)})
\stackrel{\simeq}{\longrightarrow} 
\rR_{k(\chi')/k}(\bG_{m,k(\chi')}) \] 
which intertwines $\rho(\chi)$ and $\rho(\chi')$. 

Conversely, assume that  
$\rho(\chi')$ is isomorphic to $\rho(\chi)$. 
Then the commutant algebras are isomorphic
as well, hence $k(\chi') \simeq k(\chi)$
as extensions of $k$ (Proposition \ref{prop:irr}). 
In view of Lemma \ref{lem:def}, it follows that 
$\rho(\chi' \vert_M) \simeq \rho(\chi \vert_M)$. 
Consider the diagonalizable $k_s$-group scheme
$M_{k_s}$; then its representation 
$\rho(\chi \vert_M)_{k_s}$ decomposes into
the direct sum of one-dimensional representations
with weights the $\gamma \cdot \chi \vert_M$, where 
$\gamma \in \Gamma$ (since the algebra
$k(\chi \vert_M) \otimes_k k_s$ decomposes  
into the direct sum of twists of $k_s$ by elements of
$\Gamma$). Using the analogous decomposition
for $\rho(\chi' \vert_M)_{k_s}$, it follows that
$\Gamma \cdot \chi' \vert_M = \Gamma \cdot \chi \vert_M$.
Hence $\chi' \in \Gamma \cdot \chi$. 
\end{proof}

\subsection{Blocks}
\label{subsec:blocks}

In this subsection, we study the decomposition 
of the category $H\tmod$ into blocks; the main result
(Proposition \ref{prop:reduc}) describes the structure
of each block in terms of twisted representations
of the largest unipotent quotient $U$. We start with
a key observation:

\begin{proposition}\label{prop:ext}
Let  $\chi,\chi' \in \rX(H)$. If
$\chi' \notin \Gamma \cdot \chi$ then
$\Ext^1_H(k(\chi),k(\chi')) = 0$.
\end{proposition}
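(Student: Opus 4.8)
The plan is to reduce the statement to the analogous vanishing for representations of the multiplicative part $M$, where it follows from complete reducibility. First I would use the long exact sequence for $\Ext$ coming from the exact sequence $0 \to M \to H \to U \to 0$ together with the Lyndon--Hochschild--Serre spectral sequence (or, what amounts to the same thing here, the inflation--restriction sequence), which gives
\[ 0 \longrightarrow \rH^1(U, \Hom_M(k(\chi),k(\chi'))^{?}) \longrightarrow \Ext^1_H(k(\chi),k(\chi')) \longrightarrow \Hom_U\bigl(k, \Ext^1_M(k(\chi),k(\chi'))\bigr). \]
More precisely, restricting to $M$, Lemma \ref{lem:def}(iii) identifies $\res^H_M \rho(\chi)$ with $[K:L]\,\rho(\eta)$ and $\res^H_M \rho(\chi')$ with $[K':L']\,\rho(\eta')$, where $\eta = \chi\vert_M$, $\eta' = \chi'\vert_M$. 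The hypothesis $\chi' \notin \Gamma\cdot\chi$ implies, via Proposition \ref{prop:gal} applied to $M$ (equivalently, via the weight decomposition over $k_s$ used in its proof), that $\rho(\eta)$ and $\rho(\eta')$ have disjoint sets of $\Gamma$-conjugate weights; hence $\Hom_M(\rho(\chi),\rho(\chi')) = 0$ and, since every $M$-module is completely reducible (\cite[II.3.3.7, II.3.4.3]{DG}), also $\Ext^1_M(\rho(\chi),\rho(\chi')) = 0$.

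With both the $\Hom$ term and the $\Ext^1$ term over $M$ vanishing, the spectral sequence (or five-term exact sequence) immediately forces $\Ext^1_H(k(\chi),k(\chi')) = 0$: the $\rH^1(U, -)$ contribution has coefficients in $\Hom_M(k(\chi),k(\chi')) = 0$, and the remaining term embeds into $\Ext^1_M(k(\chi),k(\chi'))^{U} = 0$. One subtlety is that $U$ is only pro-algebraic (indeed profinite when $p>0$), so I would phrase the cohomology of $U$ as a filtered direct limit over algebraic quotients $U'$, exactly as is done for $\Ext^1_{\tcL}(U,\bG_m)$ in the proof of Lemma \ref{lem:def}(i); the spectral sequence for the extension $0 \to M \to H \to U \to 0$ is then obtained by passing to the limit from the spectral sequences of the algebraic quotients $0 \to M \to H' \to U' \to 0$. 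Alternatively, and perhaps more cleanly, one can argue directly: an extension $0 \to k(\chi') \to E \to k(\chi) \to 0$ of $H$-modules, restricted to $M$, splits $M$-equivariantly and in fact canonically, because $k(\chi)$ and $k(\chi')$ lie in different blocks of $M\tmod$; the canonical $M$-splitting is then automatically $H$-equivariant (as $H/M = U$ acts on the $M$-isotypic decomposition and must preserve the canonical direct summands), so the extension splits over $H$.

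The main obstacle is the book-keeping around the pro-algebraic group $U$: making the Hochschild--Serre spectral sequence (or the five-term sequence) legitimate when the quotient is not of finite type, and checking the relevant $\Ext$ and $\Hom$ groups genuinely commute with the filtered limits involved. I expect the direct splitting argument of the previous paragraph to sidestep most of this, since it only uses the block decomposition of $M\tmod$ (which is elementary for $M$ of multiplicative type, via Cartier duality and the weight space decomposition over $k_s$) together with the functoriality of that decomposition under the $H$-action; the key input is precisely the disjointness of $\Gamma$-orbits of weights coming from $\chi' \notin \Gamma\cdot\chi$ via Lemma \ref{lem:def}. The remaining steps — that a canonical $M$-equivariant splitting of an $H$-module extension is $H$-equivariant, and the translation between characters $\chi$ and the representations $\rho(\chi)$ — are routine given Propositions \ref{prop:irr} and \ref{prop:gal}.
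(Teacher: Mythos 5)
Your proposal is correct, but it diverges from the paper's proof at the decisive step, so a comparison is worth making. The paper also restricts to $M$: using the $\Gamma$-equivariant identification $\rX(H)\simeq\rX(M)$ and Proposition \ref{prop:gal}, the simple $M$-modules $k(\chi\vert_M)$ and $k(\chi'\vert_M)$ are non-isomorphic, so any extension $0\to k(\chi')\to E\to k(\chi)\to 0$ splits uniquely over $M$; but the paper then finishes by noting that $\Hom_M(k(\chi),E)$ is a nonzero module under the unipotent quotient $U=H/M$, hence contains a nonzero $U$-invariant, i.e.\ a nonzero $H$-morphism $k(\chi)\to E$, which forces splitting. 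Your preferred ``direct'' argument replaces this appeal to unipotence by the observation that the $M$-isotypic decomposition of $E$ is $H$-stable, so the canonical $M$-splitting is already $H$-equivariant; this is valid, but the $H$-stability of the isotypic components is precisely where commutativity of $H$ enters (for noncommutative $H$ the quotient permutes isotypic components), and you should make that explicit --- it is in effect Lemma \ref{lem:dec}(i)--(ii). Your spectral-sequence route is also fine: once $\Hom_M(k(\chi),k(\chi'))=0$ and $\rH^1(M,-)=0$ (semisimplicity of $M\tMod$), the five-term sequence kills $\Ext^1_H(k(\chi),k(\chi'))$; and your worry about $U$ being pro-algebraic is unnecessary, since the Hochschild--Serre spectral sequence of \cite[I.6.6]{Jantzen} holds for arbitrary affine group schemes over a field (the paper itself invokes it in this generality in Remark \ref{rem:blocks}), so no limit over algebraic quotients is needed. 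What the paper's route buys is brevity --- no spectral sequence and no isotypic bookkeeping, only ``a nonzero module under a unipotent group scheme has a nonzero invariant''; what yours buys is the observation that the vanishing does not actually use unipotence of $U$, only commutativity of $H$ and complete reducibility over $M$.
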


\begin{proof}
By Lemma \ref{lem:def}, we have 
$\res_M^H \, k(\chi) \simeq n \, k(\chi \vert_M)$, 
$\res_M^H \, k(\chi') \simeq n' \, k(\chi' \vert_M)$
for positive integers $n,n'$; moreover, the
simple $M$-modules $k(\chi \vert_M)$,
$k(\chi' \vert_M)$ are non-isomorphic. 
Thus, every extension of $H$-modules
\[ 0 \longrightarrow k(\chi') \longrightarrow E
\longrightarrow k(\chi) \longrightarrow 0 \]
has a unique splitting as an extension of 
$M$-modules; in particular, the space 
$\Hom_M(k(\chi),E)$ is nonzero. As this space 
is a module under the unipotent group scheme 
$U = H/M$, it contains a nonzero $U$-invariant;
hence $\Hom_H(k(\chi),E) \neq 0$. 
As a consequence, the above extension splits.
\end{proof}

Combining Proposition \ref{prop:ext} with 
\cite[II.7.1]{Jantzen}, we obtain a decomposition
of the category of finite-dimensional $H$-modules 
into \emph{blocks},
\begin{equation}\label{eqn:blocks} 
H\tmod = \bigoplus_{\chi \in \rX(H)/\Gamma} H\tmod_{\chi}, 
\end{equation}
where $H\tmod_{\chi}$ denotes the full subcategory of $H\tmod$ 
consisting of the modules having all their composition
factors isomorphic to $k(\chi)$. Each $H\tmod_{\chi}$ 
is an abelian category; it has a unique simple object,
$k(\chi)$, which is its own commutant algebra
(Proposition \ref{prop:irr}). 
 
Taking for $\chi$ the trivial character, 
we obtain the \emph{principal block} $H\tmod_0$, 
consisting of the modules with trivial composition factors. 
By Lemma \ref{lem:def}, these are exactly the modules 
fixed pointwise by $M$, or equivalently the modules under
$H/M = U$. This yields an equivalence 
\begin{equation}\label{eqn:Umod} 
H\tmod_0 \simeq U\tmod, 
\end{equation} 
where the right-hand side is the category of 
\emph{unipotent representations}.

\begin{remark}\label{rem:blocks}
Lemma \ref{prop:ext} and the block decomposition 
(\ref{eqn:blocks}) easily imply that
$\Ext^i_H(k(\chi),k(\chi')) = 0$ for all $i \geq 1$ and 
$\chi,\chi' \in \rX(H)$ such that $\chi' \notin \Gamma \cdot \chi$.
Also, we have canonical isomorphisms
\[ \Ext^i_H(k(\chi),k(\chi)) \simeq
\rH^i(H, k(\chi)^{\vee} \otimes k(\chi)) \simeq
\rH^i(U, \End_M \, k(\chi)), \]
where the first equality holds by \cite[I.4.4]{Jantzen},
and the second one follows from the Hochschild-Serre
spectral sequence (see \cite[I.6.6]{Jantzen})
in view of the complete reducibility of representations of $M$.

If $k(\chi)$ is separable over $k$, then $k(\chi)$ 
is irreducible as an $M$-module, and hence 
$\End_M \, k(\chi) = k(\chi)$ by Proposition \ref{prop:irr}. 
This yields canonical isomorphisms
\[ \Ext^i_H(k(\chi),k(\chi)) \cong 
\rH^i(U,k) \otimes_k k(\chi) \quad (i \geq 0), \]
which also follow from Theorem \ref{thm:sep} below.
As $\rH^1(U,k) \neq 0$ for any nontrivial unipotent group 
$U$ (see e.g. \cite[II.3.7, IV.2.5]{DG}), it follows
that the category $H\tmod_{\chi}$ is semisimple
if and only if $U = 0$. 

For an arbitrary character $\chi$, it would be interesting 
to explicitly describe $\Ext^1_H(k(\chi),k(\chi))$, 
and to deduce an effective criterion for its vanishing
(which is equivalent to the category $H\tmod_{\chi}$
being semisimple). We will obtain a characterization
of the semisimplicity of $H\tmod_{\chi}$ by an alternative
approach, in Lemma \ref{lem:inj}.
\end{remark}

The block decomposition (\ref{eqn:blocks}) extends 
to a decomposition of the category of $H$-modules,
\[ 
H\tMod = \bigoplus_{\chi \in \rX(H)/\Gamma} H\tMod_{\chi}, 
\]
where $H\tMod_{\chi}$ consists of the direct limits of
objects of $H\tmod_{\chi}$. In particular, we obtain
a decomposition of the regular representation,
\[ \cO(H) = \bigoplus_{\chi \in \rX(H)/\Gamma} \cO(H)_{\chi}. \]

\begin{lemma}\label{lem:dec}

\begin{enumerate}

\item[{\rm (i)}]
For any $H$-module $V$, we have a natural isomorphism 
of $H$-modules
\[ \bigoplus_{\chi \in \rX(H)/\Gamma} 
\Hom_M(k(\chi \vert_M),V) 
\stackrel{\simeq}{\longrightarrow} V, 
\quad f \longmapsto f(1),\]
where $H$ acts on $\Hom_M(k(\chi \vert_M),V)$ 
via its action on $V$.

\item[{\rm (ii)}]
Each $\Hom_M(k(\chi \vert_M),V)$ is an object of
$H\tMod_{\chi}$. 

\item[{\rm (iii)}]
Each $\cO(H)_{\chi}$ is the injective hull of $k(\chi)$
in $H\tMod_{\chi}$.

\end{enumerate}

\end{lemma}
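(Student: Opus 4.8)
\textbf{Proof proposal for Lemma \ref{lem:dec}.}

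The plan is to prove part (i) first, and then derive (ii) and (iii) as consequences. For (i), the key is the splitting of the exact sequence (\ref{eqn:H}) over $\bar{k}$, combined with Galois descent. Over $\bar{k}$ we have $H_{\bar{k}} \simeq M_{\bar{k}} \times U_{\bar{k}}$, so any $H_{\bar{k}}$-module decomposes as a direct sum of weight spaces under $M_{\bar{k}}$, indexed by $\rX(M) = \rX(H)$. For a fixed $\bar\chi \in \rX(H)$, the sum of the weight spaces over the $\Gamma$-orbit $\Gamma \cdot \bar\chi$ is defined over $k$ by Galois descent, and I would identify this $k$-form with $\Hom_M(k(\chi\vert_M), V)$ (where $\chi$ is a representative of the orbit): indeed $k(\chi\vert_M)\otimes_k\bar k$ decomposes as the sum of the one-dimensional weight spaces $\bar k_{\gamma\cdot\chi\vert_M}$ for $\gamma$ ranging over $\Gamma/\Gamma_\chi$, exactly as in the proof of Proposition \ref{prop:gal}, so $\Hom_M(k(\chi\vert_M),V)\otimes_k\bar k$ is the direct sum of the weight spaces of $V_{\bar k}$ over the orbit. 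The evaluation map $f\mapsto f(1)$ is $H$-equivariant by construction (here $1\in k(\chi\vert_M)$ is $M$-stable up to the character, but the $H$-action on $\Hom_M$ is the one twisting through, and one checks on $\bar k$-points that this recovers the inclusion of the weight space), and after base change to $\bar k$ it becomes the canonical decomposition of $V_{\bar k}$ into $M_{\bar k}$-weight spaces, hence an isomorphism. Since a $k$-linear map is an isomorphism iff it is after $\otimes_k\bar k$, we are done.

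For (ii), I would observe that the composition factors of $\Hom_M(k(\chi\vert_M),V)$ as an $H$-module are, after base change to $\bar k$, supported only on the weights in $\Gamma\cdot\chi\vert_M$; by Lemma \ref{lem:def} and Proposition \ref{prop:gal} this means every composition factor is isomorphic to $k(\chi)$, i.e. the module lies in $H\tMod_\chi$. (Alternatively: the projection $V\to\Hom_M(k(\chi\vert_M),V)$ dual to the inclusion in (i) is the block projection, since blocks are characterized by composition factors and the two sides agree on simples by construction.)

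For (iii), apply (i) to $V=\cO(H)$ with the regular representation; by uniqueness of the block decomposition, $\cO(H)_\chi = \Hom_M(k(\chi\vert_M),\cO(H))$. Now $\cO(H)$ is an injective $H$-module (Jantzen \cite[I.3.10]{Jantzen}), hence so is each direct summand $\cO(H)_\chi$, and it is injective in the block subcategory $H\tMod_\chi$. To see it is the \emph{injective hull} of $k(\chi)$, I would check that the socle of $\cO(H)_\chi$ is simple, equal to $k(\chi)$: since $k(\chi)$ is the unique simple object of the block, the socle is a sum of copies of $k(\chi)$, and I must show the multiplicity is one. This follows from $\Hom_H(k(\chi),\cO(H)) \simeq \Hom_H(k(\chi),\cO(H)_\chi)$ together with the comodule/Frobenius-reciprocity identification $\Hom_H(W,\cO(H))\simeq W^\vee$ for any finite-dimensional $H$-module $W$ (Jantzen \cite[I.3.7]{Jantzen}, the injectivity statement for $\cO(H)$), applied to $W=k(\chi)$: this gives $\Hom_H(k(\chi),\cO(H)_\chi)\simeq k(\chi)^\vee$, which as a module over $\End_H(k(\chi)) = k(\chi)$ (Proposition \ref{prop:irr}) is one-dimensional, so the socle multiplicity is one. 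Hence $k(\chi)\hookrightarrow\cO(H)_\chi$ is an essential extension into an injective, i.e. an injective hull.

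The main obstacle I anticipate is part (i): one must be careful about what the $H$-action on $\Hom_M(k(\chi\vert_M),V)$ actually is (the source $k(\chi\vert_M)$ is only an $M$-module, not an $H$-module, so "$H$ acts via its action on $V$" needs the observation that $M$ is normal in $H$, so $H$ permutes the $\Hom_M$-spaces among the Galois twists — and the statement is that each individual summand is $H$-stable because $\Gamma\cdot\chi\vert_M$ is a single orbit), and one must verify that the descended $k$-subspace of $V$ really is this $\Hom$ and not merely abstractly isomorphic to it. Once the bookkeeping with weight spaces, Galois orbits, and the identification $k(\chi\vert_M)\otimes_k\bar k \simeq \bigoplus_{\gamma\in\Gamma/\Gamma_\chi}\bar k_{\gamma\cdot\chi\vert_M}$ is set up cleanly (all of which is already implicit in \S\ref{subsec:irr}), parts (ii) and (iii) are short.
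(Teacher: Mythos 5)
Your proposal is correct, and your parts (ii) and (iii) essentially coincide with the paper's proof: the same reduction of (ii) to the fact that the restriction to $M$ is $k(\chi\vert_M)$-isotypic together with Lemma \ref{lem:def} (iii), and in (iii) the same use of injectivity of $\cO(H)$, Frobenius reciprocity giving $\Hom_H(k(\chi),\cO(H)) \simeq k(\chi)^{\vee}$, and $\End_H(k(\chi)) = k(\chi)$ to get socle multiplicity one. Where you genuinely diverge is (i). The paper stays over $k$: since $\res_M^H V$ is semisimple, it invokes the isotypic decomposition $\bigoplus_{\chi} \Hom_M(k(\chi\vert_M),V) \otimes_{\End_M(k(\chi\vert_M))} k(\chi\vert_M) \simeq V$, $f \otimes v \mapsto f(v)$, and then collapses the tensor factor using $\End_M(k(\chi\vert_M)) = k(\chi\vert_M)$ (Proposition \ref{prop:irr} (i)), which turns the map directly into $f \mapsto f(1)$. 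You instead base-change to $\bar k$, decompose $V_{\bar k}$ into $M_{\bar k}$-weight spaces, group them by $\Gamma$-orbits via $k(\chi\vert_M)\otimes_k \bar k \simeq \prod_{\gamma \in \Gamma/\Gamma_{\chi}} \bar k_{\gamma\cdot\chi\vert_M}$, and check that the $k$-defined evaluation map becomes the weight decomposition after $\otimes_k \bar k$. This is valid; the extra inputs are the compatibility of $\Hom_M(k(\chi\vert_M),-)$ with field extension (fine, as the source is finite-dimensional) and the bookkeeping you acknowledge. The paper's argument is shorter and avoids all descent considerations; yours makes the weight-space picture explicit, which is then what (ii) reads off.

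Two small points. First, your worry that "$M$ is normal in $H$, so $H$ permutes the $\Hom_M$-spaces among the Galois twists" is off target: since $H$ is commutative, $(h\cdot f)(x) := h\cdot f(x)$ is again $M$-equivariant, so each $\Hom_M(k(\chi\vert_M),V)$ is $H$-stable outright, and equivariance of $f\mapsto f(1)$ is immediate; it is the semilinear Galois action on $V_{\bar k}$, not $H$, that permutes the individual weight spaces within an orbit. Second, "defined over $k$ by Galois descent" should be applied with $k_s$ rather than $\bar k$ when $k$ is imperfect (characters of $M$ are defined over $k_s$, so the weight decomposition already lives there); in fact your final step does not need descent at all, since the evaluation map is defined over $k$ and you merely check it is an isomorphism after a field extension. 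Neither point affects the correctness of your argument.
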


\begin{proof}
(i) As the $M$-module $\res_M^H \, V$ is semisimple
and the simple $M$-modules are exactly the 
$k(\chi \vert_M)$, where $\chi$ is uniquely determined 
up to the $\Gamma$-action, we have an isomorphism
\[ \bigoplus_{\chi \in \rX(H)/\Gamma} 
\Hom_M(k(\chi \vert_M),V) 
\otimes_{\End_M( k(\chi \vert_M))} k(\chi \vert_M) 
\stackrel{\simeq}{\longrightarrow} V, \]
which takes every $f \otimes v$ to $f(v)$.
Moreover, $\End_M( k(\chi \vert_M))$ is identified with 
$k(\chi \vert_M)$ acting by multiplication, in view of 
Proposition \ref{prop:irr}. This yields the assertion.

(ii) By construction, $\res_M^H \, \Hom_M(k(\chi \vert_M),V)$
is a direct sum of copies of $k(\chi \vert_M)$. This implies 
the statement in view of Lemma \ref{lem:def} (iii).

(iii) Since $\cO(H)$ is an injective object of $H\tMod$,
we see that $\cO(H)_{\chi}$ is an injective object
of $H\tMod_{\chi}$. So it suffices to show that 
$\cO(H)_{\chi}$ contains a unique copy of $k(\chi)$. But
\[ \Hom_H(k(\chi), \cO(H)_{\chi}) = 
\Hom_H(k(\chi), \cO(H)) \simeq k(\chi)^{\vee}, \]
where the isomorphism holds by Frobenius reciprocity
(see \cite[I.3.7]{Jantzen}). Thus, 
$\Hom_H(k(\chi), \cO(H)_{\chi})$ is a $k$-vector space
of dimension $[k(\chi) : k]$, and hence a vector space
of dimension $1$ under $\End_H\, k(\chi) = k(\chi)$. 
This yields the desired assertion.
\end{proof}

Next, choose $\chi \in \rX(H)$ and use the notation
$K,\eta,L$ of Lemma \ref{lem:def}. Recall from that lemma 
that $L$ is the separable closure of $k$ in $K$; 
in particular, the degree $[K:L]$ is a power of 
the characteristic exponent of $k$. 
We view $K$ as a simple $H$-module via $\rho(\chi)$; 
likewise, $L$ is a simple $M$-module via $\rho(\eta)$.

\begin{lemma}\label{lem:inj}

\begin{enumerate}

\item[{\rm (i)}]  
$\cO(H)_{\chi} \simeq \ind_M^H(L)$ as $H$-modules.

\item[{\rm (ii)}] The category $H\tmod_{\chi}$ is semisimple 
if and only if the group scheme $U$ is finite of order $[K:L]$.

\end{enumerate}

\end{lemma}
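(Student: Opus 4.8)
The plan is to compute the injective hull $\cO(H)_\chi$ of $k(\chi)$ in $H\tMod_\chi$ explicitly, and then read off the semisimplicity criterion from this description. For part (i), I would start from the decomposition of Lemma \ref{lem:dec}(i), applied to the regular representation $V = \cO(H)$: the $\chi$-component $\cO(H)_\chi$ is identified with $\Hom_M(k(\eta), \cO(H))$, where as before $\eta = \chi\vert_M$. The key computation is then to show that, as an $H$-module, $\Hom_M(k(\eta), \cO(H)) \simeq \ind_M^H(L)$. Here I expect to use Frobenius reciprocity (\cite[I.3.7]{Jantzen}) together with the identification $\cO(H) \simeq \ind_M^H \cO(M)$ coming from the exact sequence (\ref{eqn:H}): since $\cO(M) = \bigoplus_{\eta' \in \rX(M)/\Gamma} \cO(M)_{\eta'}$ and each $\cO(M)_{\eta'}$ is the simple $M$-module $k(\eta')$ appearing with the right multiplicity, the $\eta$-isotypic part of $\cO(H)$ restricted to $M$ is a sum of copies of $k(\eta) = L$, and induction back up gives $\ind_M^H(L)$. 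One has to be a little careful that the multiplicities match: by Lemma \ref{lem:def}(iii), $\res_M^H k(\chi) \simeq [K:L]\, k(\eta)$, and $\End_M(L) = L$ by Proposition \ref{prop:irr}; tracking these through should pin down the isomorphism.

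For part (ii), the category $H\tmod_\chi$ is semisimple if and only if its unique simple object $k(\chi)$ is injective in $H\tmod_\chi$, i.e. if and only if its injective hull $\cO(H)_\chi$ equals $k(\chi)$ itself — equivalently $\dim_k \cO(H)_\chi = \dim_k k(\chi) = [K:k]$. Using part (i), $\dim_k \cO(H)_\chi = \dim_k \ind_M^H(L) = \dim_k(L) \cdot \dim_k \cO(U)$, since induction along the quotient $H \to U$ with kernel $M$ multiplies dimension by the order of $U$ (when $U$ is finite) or is infinite-dimensional (when $U$ is not finite). So semisimplicity forces $U$ to be finite, and then the condition $[L:k]\cdot |U| = [K:k] = [K:L][L:k]$ reduces to $|U| = [K:L]$. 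Conversely, if $U$ is finite of order $[K:L]$, the same dimension count gives $\dim_k \cO(H)_\chi = [K:k] = \dim_k k(\chi)$, so the injective hull is simple and the block is semisimple.

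The main obstacle I anticipate is the bookkeeping in part (i): making precise the claim that $\cO(H)$, as an $M$-module, decomposes with the $\eta$-isotypic component being exactly $\ind$-induced to $\ind_M^H(L)$ on the nose (rather than up to some multiplicity twist), and checking that the $H$-action on $\Hom_M(k(\eta), \cO(H))$ really does coincide with the one on $\ind_M^H(L)$. This is essentially a compatibility of Frobenius reciprocity with the block decomposition, and I would expect to invoke \cite[I.3.3, I.3.7]{Jantzen} and the behaviour of induction under the exact sequence (\ref{eqn:H}); once that identification is in hand, part (ii) is a short dimension count. An alternative, if the direct approach gets tangled, is to compute $\dim_k \Ext^1_H(k(\chi), k(\chi))$ directly via the Hochschild--Serre isomorphism $\Ext^i_H(k(\chi),k(\chi)) \simeq \rH^i(U, \End_M k(\chi))$ from Remark \ref{rem:blocks} and argue that this vanishes precisely when $U$ acts on $\End_M k(\chi) \simeq K \otimes_L K$ in a way forcing $|U| = \dim_L(K\otimes_L K)^{\text{something}}$; but the injective-hull route is cleaner and more self-contained.
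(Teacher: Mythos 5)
Your proposal is correct. Part (ii) is essentially the paper's argument: the paper also reduces semisimplicity of the block to the equality $\dim_k \cO(H)_{\chi} = [K:k]$ via Lemma \ref{lem:dec}(iii), and then computes $\dim_k \cO(H)_{\chi} = [L:k]\,\dim_k \cO(U)$ -- it does so by base change to $\bar{k}$ and the splitting $H_{\bar{k}} \simeq U_{\bar{k}} \times M_{\bar{k}}$, which is also the cleanest way to justify your step ``induction along $H \to U$ multiplies dimension by $\dim_k \cO(U)$, or is infinite-dimensional if $U$ is not finite.'' For part (i) you take a genuinely different route: you decompose $\cO(H) \simeq \ind_M^H \cO(M) \simeq \bigoplus_{\eta'} \ind_M^H k(\eta')$ by transitivity of induction and then match this with the block decomposition, whereas the paper never decomposes $\cO(H)$; it shows directly that $\ind_M^H L$ is injective in $H\tMod$ (Frobenius reciprocity plus complete reducibility of $M$-modules), lies in the block of $\chi$ (Lemma \ref{lem:def}(iii)), and contains exactly one copy of $k(\chi)$, since $\Hom_H(K,\ind_M^H L) \simeq \Hom_M(K,L) = \Hom_L(K,L)$ is one-dimensional over $\End_H(K)=K$; being injective with socle $k(\chi)$, it is the injective hull of $k(\chi)$ and hence coincides with $\cO(H)_{\chi}$ by Lemma \ref{lem:dec}(iii). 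This neatly sidesteps exactly the bookkeeping you flag, namely identifying the $H$-module $\Hom_M(L,\cO(H))$ with $(L\otimes_k\cO(H))^M$ on the nose, where left/right translation conventions for the commutative group (and the distinction between $k(\eta)$ and its dual $k(-\eta)$) could trip you up. Within your route that check is genuine but routine: after base change to $\bar{k}$, where $\ind_{M_{\bar{k}}}^{H_{\bar{k}}}$ of a character $\lambda$ is $\bar{k}_{\lambda}\otimes\cO(U_{\bar{k}})$, each summand $\ind_M^H k(\eta')$ restricts $\eta'$-isotypically to $M$, hence by Lemma \ref{lem:def}(iii) lies in the single block of the character $\chi'$ with $\chi'\vert_M=\eta'$, and this pins down $\cO(H)_{\chi} \simeq \ind_M^H L$. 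What your approach buys is an explicit block decomposition of the whole regular representation; what the paper's buys is an argument with no convention-chasing, at the price of invoking \cite[I.3.18]{Jantzen} to recognize the injective hull.
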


\begin{proof}
(i) We argue as in the proof of Lemma \ref{lem:dec} (iii). 
By Frobenius reciprocity (see \cite[I.3.4]{Jantzen}), we have 
\[ \Hom_H(V,\ind_M^H \, L)) \simeq \Hom_M(\res_M^H \, V,L) \]
for any $H$-module $V$. 
Since every $M$-module is semisimple, it follows that 
$\ind_M^H \, L$ is injective in $H\tMod$. Also, if $V$ is
simple and not isomorphic to $K$, then 
$\Hom_M(\res_M^H \, L,V) = 0$ by Lemma \ref{lem:def} (iii).
Thus, $\ind_M^H \, L$ is an object of $H\tMod_{\chi}$. 
Finally,
\[ \Hom_H(K,\ind_M^H \, L) \simeq \Hom_M(K,L) = \Hom_L(K,L), \]
where the equality follows from the fact that $\End_M(L) = L$
(Proposition \ref{prop:irr}). Thus, $\Hom_H(K,\ind_M^H(L))$
is a $L$-vector space of dimension $[K:L]$, and hence
a $k$-vector space of dimension $[K:k]$. Since the $k$-vector
space $\End_H(K)$ has dimension $[K:k]$ as well (Proposition
\ref{prop:irr} again), this yields the assertion in view of
\cite[I.3.18]{Jantzen}.

(ii) The semisimplicity of $H\tmod_{\chi}$ is equivalent to
that of $H\tMod_{\chi}$, and the latter holds if and only if
the injective cogenerator
$\cO(H)_{\chi}$ is semi-simple; by Lemma \ref{lem:dec} (iii), 
this is equivalent to the equality $\dim \cO(H)_{\chi} = [K : k]$. 
Also,
\[ 
\dim \cO(H)_{\chi} = \dim (\cO(H) \otimes_k L)^M 
= \dim_{\bar{k}} 
(\cO(H_{\bar{k}}) \otimes_{\bar{k}} L_{\bar{k}})^{M_{\bar{k}}}
= \dim_{\bar{k}} 
\cO(U_{\bar{k}}) \otimes_{\bar{k}} L_{\bar{k}},
\]
where the latter equality follows from the isomorphism
$H_{\bar{k}} \simeq U_{\bar{k}} \times M_{\bar{k}}$.
Thus, $\dim \cO(H)_{\chi} = [L:k] \, \dim \cO(U)$; 
this implies the assertion.
\end{proof}

\begin{example}\label{ex: blocks}
Assume that $k$ is imperfect and separably closed. We may then
choose $t \in k \setminus k^p$. Let $V \subset \bG_a \times \bG_a$ 
be the zero subscheme of $y^p - x - t x^p$. Then $V$ is a 
nontrivial $k$-form of $\bG_a$; in view of 
\cite[Lem.~9.4]{Totaro},
it follows that there exists a nontrivial extension
\[ \xi : \quad 0 \longrightarrow \bG_m \longrightarrow E
\longrightarrow V \longrightarrow 0. \]
On the other hand, the projection $x : V \to \bG_a$ lies 
in an exact sequence
\[ 0 \longrightarrow \alpha_p \longrightarrow V 
\longrightarrow \bG_a \longrightarrow 0, \]
where $\alpha_p$ denotes the kernel of the Frobenius
endomorphism of $\bG_a$. The pullback of $\xi$ by 
$\alpha_p \to V$ yields an extension
\[ 0 \longrightarrow \bG_m \longrightarrow H
\longrightarrow \alpha_p \longrightarrow 0, \]
which is nontrivial as well; indeed, the pullback map 
\[ \Ext^1_{\cC}(V,\bG_m) \longrightarrow 
\Ext^1_{\cC}(\alpha_p,\bG_m) \]
is injective in view of the vanishing of 
$\Ext^1_{\cC}(\bG_a,\bG_m)$ (which follows e.g. from
\cite[III.6.2.5, III. 6.5.1]{DG}). Thus, the canonical
character of $\bG_m$ extends uniquely to a character
$\chi$ of $H$, which is not defined over $k$. But 
$\chi$ is defined over $K := k(t^{1/p})$, since 
$V_K \simeq (\bG_a)_K$ and hence $\xi$ splits over $K$.
As $[K : k] = p$, it follows that $k(\chi) = K$.
Since $k(\chi \vert_{\bG_m}) = k$ and $\alpha_p$
has order $p$, Lemma \ref{lem:inj} (ii) yields that
the category $H\tmod_{\chi}$ is semisimple.
\end{example}

The exact sequence (\ref{eqn:H}) yields an exact
sequence of $L$-group schemes
\[ 0 \longrightarrow M_L \longrightarrow H_L 
\longrightarrow U_L \longrightarrow 0. \] 
Viewing $\eta$ as a morphism $M_L \to \bG_{m,L}$,
this yields in turn a pushout diagram of such group schemes
\begin{equation}\label{eqn:pushout} \xymatrix{
0 \ar[r] & M_L \ar[r] \ar[d]^{\eta} & H_L \ar[r] \ar[d] 
& U_L  \ar[r] \ar[d]^{\id} & 0 \\
0 \ar[r] & \bG_{m,L} \ar[r]  
& H_{\eta} \ar[r] & U_L \ar[r] & 0 \\
} \end{equation}
Also, every object $V$ of $H\tMod_{\chi}$ may be identified
with $\Hom_M(L,V)$ in view of Lemma \ref{lem:dec}.
Thus, $V$ has the structure of a $L$-vector space equipped
with a compatible action of $H$, i.e., of an $H_L$-module.
Since $M_L$ acts on $V$ via $\eta$, the representation of $H_L$ 
in $V$ factors uniquely through a representation of the pushout 
$H_{\eta}$. Denote by $F(V)$ the corresponding 
$H_{\eta}$-module, and by $H_{\eta}\tmod_1$ the block
of $H_{\eta}\tmod$ associated with the identity character 
of $\bG_{m,L}$. We may view this block as that of twisted 
$U_L$-modules, where the twist is defined by
the extension in the bottom line of (\ref{eqn:pushout}).

\begin{proposition}\label{prop:reduc}
The assignment $V \mapsto F(V)$ extends to an exact
functor, which yields an equivalence of categories 
\[ F : H\tmod_{\chi} \longrightarrow H_{\eta}\tmod_1. \]
\end{proposition}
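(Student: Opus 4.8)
The plan is to check that the assignment $V \mapsto F(V)$ is functorial, that it is exact, and that it admits a quasi-inverse; most of this has already been set up in the statement and in Lemma~\ref{lem:dec}, so the work is to organize it.

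\emph{Step 1: $F$ is a well-defined exact functor.} Given an $H$-module morphism $u : V \to W$ between objects of $H\tmod_{\chi}$, it induces an $M$-equivariant map, hence (by the identification $V \simeq \Hom_M(L,V)$ of Lemma~\ref{lem:dec}, which is natural in $V$) an $L$-linear $H_L$-equivariant map $\Hom_M(L,V) \to \Hom_M(L,W)$; since $M_L$ acts through $\eta$ on both sides, this factors uniquely through the pushout $H_{\eta}$, giving $F(u) : F(V) \to F(W)$. Functoriality is then immediate from the uniqueness of these factorizations. For exactness: the equivalence $V \mapsto \Hom_M(L,V)$ between $H\tmod_{\chi}$ and the category of $H_L$-modules on which $M_L$ acts via $\eta$ is exact because $\res_M^H$ is exact and $L = \End_M(k(\eta))$ is a field, so $\Hom_M(k(\eta),-)$ restricted to the $\chi$-block is exact (every object there is $\res_M^H$-isotypic of type $k(\eta)$); and passing from $H_L$-modules with $M_L$ acting via $\eta$ to $H_{\eta}$-modules in $H_{\eta}\tmod_1$ is simply the identity on underlying data, using that $H_L \to H_{\eta}$ is faithfully flat so that pullback along it is fully faithful and exact onto its essential image.

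\emph{Step 2: identifying the essential image and constructing the quasi-inverse.} An $H_{\eta}$-module lies in the block $H_{\eta}\tmod_1$ precisely when $\bG_{m,L}$ acts through the identity character, i.e. when, viewed as an $H_L$-module via $H_L \to H_{\eta}$, the subgroup $M_L$ acts via $\eta$. Restricting scalars along $k \to L$ turns such a module into a $k$-vector space with an $H$-action whose restriction to $M$ has all composition factors isomorphic to $k(\chi\vert_M)$; by Lemma~\ref{lem:def}(iii) this says exactly that the resulting $H$-module lies in $H\tmod_{\chi}$. This restriction-of-scalars functor $G : H_{\eta}\tmod_1 \to H\tmod_{\chi}$ is the candidate quasi-inverse. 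One checks $G \circ F \simeq \id$ and $F \circ G \simeq \id$ by unwinding: $F(G(N))$ has underlying $L$-vector space $\Hom_M(L, N)$, which is canonically $N$ itself because $M_L$ acts on $N$ via $\eta$ so that $\Hom_M(L,N) = \Hom_{M_L}(L,N) = N$ as $L$-spaces, $H_{\eta}$-equivariantly; conversely $G(F(V)) = V$ by the isomorphism $\Hom_M(L,V) \xrightarrow{\ \sim\ } V$, $f \mapsto f(1)$, of Lemma~\ref{lem:dec}(i), which is $H$-equivariant by construction. Both natural transformations are isomorphisms of $H$- (resp.\ $H_{\eta}$-) modules, so $F$ is an equivalence.

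\emph{Main obstacle.} The routine-looking but genuinely delicate point is the compatibility of the three structures being juggled — the $L$-linear structure coming from $\End_M(k(\eta)) = L$, the $H_L$-module structure, and the factorization through the pushout $H_{\eta}$ — and in particular verifying that the natural isomorphism $\Hom_M(L,V) \xrightarrow{\sim} V$ of Lemma~\ref{lem:dec}(i) is not merely $k$-linear and $H$-equivariant but, after the factorization through $H_{\eta}$, gives back exactly the object $F(V)$ with its $L$-structure. This amounts to showing that the $L$-action on $V$ induced by transport from $\Hom_M(L,V)$ coincides with the $L$-action defining $F(V)$ via the pushout square~(\ref{eqn:pushout}); once that identification is made, $G\circ F \simeq \id$ and $F\circ G\simeq \id$ follow formally. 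I expect this to be a short but careful diagram chase rather than a conceptual difficulty.
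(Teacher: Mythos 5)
Your proposal is correct and follows essentially the same route as the paper: the real content is the definition of $F$ on morphisms together with the observation that $H$-morphisms between objects of the block are automatically $L$-linear (using $\End_M(L)=L$ and the fact that such objects are direct sums of copies of $L$ as $M$-modules), after which exactness, full faithfulness and essential surjectivity are routine; your explicit quasi-inverse by restriction of scalars along $k \subset L$ simply spells out what the paper leaves as ``readily checked''. One justification you give is false, although the step it supports is true: the morphism $H_L \to H_{\eta}$ is not faithfully flat in general, since its kernel is $\Ker(\eta)\subset M_L$ and its cokernel is $\bG_{m,L}/\eta(M_L)$, which need not vanish (e.g.\ when $\eta$ is a torsion character). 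The correct reason that objects of $H_{\eta}\tmod_1$ are the same data as $L$-linear $H_L$-modules on which $M_L$ acts through $\eta$ --- this is exactly the factorization through the pushout set up before the proposition --- is that $\bG_{m,L}$ is diagonalizable, hence acts on any object of the block $H_{\eta}\tmod_1$ by the scalar given by the identity character; its action is therefore determined, and equivariance under it is automatic for $L$-linear $H_L$-equivariant maps. With that substitution your argument goes through as written.
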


\begin{proof}
Consider a morphism of $H$-modules $f: V \to W$.
This defines a map
\[ F(f) : \Hom_M(L,V) \longrightarrow \Hom_M(L,W),
\quad u \longmapsto f \circ u. \]
We claim that $F(f)$ is a morphism of $H_{\eta}$-modules.
It suffices to check that $F(f)$ is $L$-linear, where
$L$ acts on $\Hom_M(L,V)$, $\Hom_M(L,W)$ via multiplication 
on itself. But this follows by identifying the $L$-modules 
$V,W$ with direct sums of copies of $L$, and using the fact
that $\End_M(L) = L$ (Proposition \ref{prop:irr}). 

The claim defines the functor $F$ on morphisms; one may 
readily check that $F$ is exact, fully faithful and
essentially surjective.
\end{proof}

Next, we introduce a class of characters for which
the associated block has an especially simple structure.
We say that $\chi \in \rX(H)$ is \emph{separable}, 
if its field of definition is a separable extension of $k$. 
Also, recall that an $H$-module $V$ is said to be 
\emph{absolutely semisimple}, if the $H_{\bar{k}}$-module 
$V_{\bar{k}} := V \otimes_k {\bar{k}}$ is semisimple.

\begin{lemma}\label{lem:sep}
With the notation of Lemma \ref{lem:def}, 
the following conditions are equivalent for a character $\chi$:

\begin{enumerate}

\item[{\rm (i)}] $\chi$ is separable.

\item[{\rm (ii)}] $L = K$.

\item[{\rm (iii)}] The representation $\res_M^H \, \rho(\chi)$ is irreducible.

\item[{\rm (iv)}] The extension
$0 \to \bG_{m,L} \to H_{\eta} \to U_L \to 0$
splits.

\item[{\rm (v)}] The $H$-module $K$ is absolutely semisimple.

\end{enumerate}

\end{lemma}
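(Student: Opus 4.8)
The plan is to prove the chain of implications
$(i)\Leftrightarrow(ii)$, then $(ii)\Rightarrow(iii)\Rightarrow(ii)$, then
$(ii)\Rightarrow(iv)$ and $(iv)\Rightarrow(iii)$ (or directly
$(iv)\Rightarrow(ii)$), and finally $(ii)\Leftrightarrow(v)$. Most of
these are quick consequences of results already in hand; the interesting
point is the equivalence involving the splitting condition (iv).

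First, $(i)\Leftrightarrow(ii)$ is immediate from Lemma \ref{lem:def}(i):
$L = K\cap k_s$, and $\chi$ is separable precisely when $K = k(\chi)$ is
contained in $k_s$, i.e.\ when $K = L$. For $(ii)\Leftrightarrow(iii)$,
recall from Lemma \ref{lem:def}(iii) that
$\res_M^H\,\rho(\chi)\simeq [K:L]\,\rho(\eta)$, where $\rho(\eta)$ is
irreducible by Proposition \ref{prop:irr}(i); hence the restriction is
irreducible if and only if $[K:L] = 1$, that is $L = K$. For
$(ii)\Leftrightarrow(v)$: by Proposition \ref{prop:irr} the $k$-module
$K$ with its $\rho(\chi)$-action is irreducible with commutant $K$; its
base change to $\bar k$ decomposes according to the $\Gamma$-orbit of
$\chi$, and a standard computation (already used in the proof of
Proposition \ref{prop:gal}) shows $K_{\bar k}$ is a semisimple
$H_{\bar k}$-module exactly when the algebra $K\otimes_k\bar k$ is
reduced, i.e.\ $K/k$ is separable, i.e.\ $K = L$. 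One can alternatively
deduce $(iii)\Leftrightarrow(v)$ directly: after the splitting
$H_{\bar k}\simeq U_{\bar k}\times M_{\bar k}$, semisimplicity of
$V_{\bar k}$ forces $U_{\bar k}$ to act trivially, so $V_{\bar k}$ is a
sum of characters of $M_{\bar k}$ lying in the orbit $\Gamma\cdot\eta$,
and this matches $\dim_k K$ copies only when $[K:L] = 1$.

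The main work is the equivalence of (iv) with the others, and I expect
this to be where care is needed. Starting from (ii), i.e.\ $K = L$, the
character $\eta = \chi\vert_M$ is then a character of $M_K$ that extends
to a character of $H_K$ (namely $\chi$ itself, now defined over $K = L$);
pushing out the sequence $0\to M_L\to H_L\to U_L\to 0$ along
$\eta: M_L\to\bG_{m,L}$, this extension of $\chi$ provides a splitting of
the bottom row of (\ref{eqn:pushout}), giving (iv). Conversely, if the
pushout $0\to\bG_{m,L}\to H_\eta\to U_L\to 0$ splits, then the splitting
$H_\eta\to\bG_{m,L}$ pulls back along $H_L\to H_\eta$ to a morphism
$H_L\to\bG_{m,L}$ whose restriction to $M_L$ is $\eta$; this says $\eta$
extends to a character of $H_L$, hence $\chi$ (which over $L$ is
determined by $\eta$ up to the $U_L$-part, and over $k_s$ agrees with
such an extension after base change) is defined over $L$, forcing
$K = k(\chi)\subseteq L$, i.e.\ $K = L$. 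The delicate step here is the
bookkeeping between characters of $H$, of $H_L$, and of $M$: one must use
that $\Hom_{\tcL}(U,\bG_m) = 0$ (as in the proof of Lemma
\ref{lem:def}(i)) so that a character of $H_L$ restricting to $\eta$ on
$M_L$ is unique, and then invoke Lemma \ref{lem:def}(i)/(ii) to identify
its field of definition with $L$. I would organize this as: (iv) $\Leftrightarrow$
"$\eta$ extends to a character of $H_L$" $\Leftrightarrow$ "$\chi$ is
defined over $L$" $\Leftrightarrow$ (ii), each arrow being a short
argument of the kind above.

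A cleaner route for $(iv)\Rightarrow$ the rest: if the bottom row of
(\ref{eqn:pushout}) splits, then $H_\eta\tmod_1\simeq U_L\tmod$
canonically (trivial twist), so by Proposition \ref{prop:reduc} the
block $H\tmod_\chi\simeq U_L\tmod$; in particular its injective
cogenerator has $L$-dimension $\dim\cO(U_L) = \dim\cO(U)$, so
$\dim_k\cO(H)_\chi = [L:k]\dim\cO(U)$. Comparing with the formula
$\dim_k\cO(H)_\chi = [L:k]\dim\cO(U)$ obtained in the proof of Lemma
\ref{lem:inj}(ii) shows consistency, but to extract $K = L$ I would
instead combine (iv) with (iii): a splitting of the pushout sequence
means $M_L$ is a direct factor of $H_L$ mapping to $\bG_{m,L}$ via
$\eta$, so $\res_{M_L}^{H_L}$ of the character $\chi$-representation is
the single character $\eta$, i.e.\ the restriction is irreducible over
$L$ and hence over $k$, which is (iii), already shown equivalent to
(ii). This avoids any dimension count. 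I would present the final writeup
in the order $(i)\Leftrightarrow(ii)$, $(ii)\Leftrightarrow(iii)$,
$(iii)\Rightarrow(iv)$ via the extension-of-$\eta$ argument,
$(iv)\Rightarrow(iii)$ via the direct-factor argument just sketched, and
$(ii)\Leftrightarrow(v)$ via base change to $\bar k$.
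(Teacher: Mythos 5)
Your main line is essentially the paper's own proof: (i)$\Leftrightarrow$(ii)$\Leftrightarrow$(iii) from Lemma \ref{lem:def}; (ii)$\Leftrightarrow$(iv) by identifying splittings of the bottom row of (\ref{eqn:pushout}) with characters of $H_L$ (equivalently, morphisms $H \to \rR_{L/k}(\bG_{m,L})$) extending $\eta$, and using $\Hom_{\tcL}(U,\bG_m)=0$ to conclude that such a character must coincide with $\chi$, so that $\chi$ is defined over $L$; and (i)$\Leftrightarrow$(v) by decomposing $K\otimes_k\bar k$ in the forward direction and using the commutant $K$ (Proposition \ref{prop:irr}) in the converse. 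One small precision for (iv)$\Rightarrow$(ii): the uniqueness has to be applied after base change to $\bar k$ (two characters of $H_{\bar k}$ agreeing on $M_{\bar k}$ differ by a character of $U_{\bar k}$, hence are equal), so that the character of $H_L$ produced by the splitting equals $\chi$ itself and $K\subseteq L$; your phrasing about uniqueness over $L$ does not quite do this by itself, but the ingredients you cite are the right ones.

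The genuine problem is the ``cleaner route'' you say you would use in the final writeup for (iv)$\Rightarrow$(iii): a splitting of $0\to\bG_{m,L}\to H_\eta\to U_L\to 0$ does \emph{not} make $M_L$ a direct factor of $H_L$; it only yields a character $\chi':H_L\to\bG_{m,L}$ with $\chi'\vert_{M_L}=\eta$. Indeed $H_L$ need not split at all over an imperfect field (Lemma \ref{lem:H_A}, Example \ref{ex: blocks}), and e.g.\ pushing out a nonsplit extension along a character with large kernel gives a split pushout with no direct-factor decomposition of $H_L$. Nor does the existence of $\chi'$ directly tell you that $\res_M^H\rho(\chi)$ is a single copy of $\rho(\eta)$: you must first identify $\chi'_{\bar k}$ with $\chi$ (the uniqueness argument) to get $K=L$, i.e.\ pass through (ii) — which is exactly your earlier argument and the paper's. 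Similarly, the parenthetical counting argument for (iii)$\Leftrightarrow$(v) (``matches $\dim_k K$ copies only when $[K:L]=1$'') is not conclusive as stated, since the multiplicities of the $M_{\bar k}$-weights are the same whether or not $V_{\bar k}$ is semisimple; keep the commutant argument instead. With these two alternatives discarded in favor of the arguments you already gave, the proof is complete and coincides with the paper's.
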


\begin{proof}
The equivalences 
(i)$\Leftrightarrow$(ii)$\Leftrightarrow$(iii) are obvious.
Also, in view of the commutative diagram (\ref{eqn:pushout}),
the splittings of the extension in (iv) may be
identified with the morphisms $H_L \to \bG_{m,L}$
extending the identity character of $\bG_{m,L}$
i.e., with the morphisms $H \to \rR_{L/k}(\bG_{m,L})$
extending $\rho(\eta): M \to \rR_{L/k}(\bG_{m,L})$; 
this implies the equivalence (ii)$\Leftrightarrow$(iv).

We now show the equivalence (i)$\Leftrightarrow$(v).
If $\chi$ is separable, then the $\bar{k}$-algebra
$K \otimes_k \bar{k}$ is isomorphic to a product
of copies of $\bar{k}$. This yields a decomposition of
the corresponding $H_{\bar{k}}$-module into
$H_{\bar{k}}$-stable lines; in particular, this module
is semisimple. 

Conversely, if the $H$-module $K$ is absolutely semisimple,
then so is its commutant algebra. By Proposition
\ref{prop:irr}, it follows that the field extension $K/k$
is separable. 
\end{proof}

\begin{theorem}\label{thm:sep}
Let $\chi$ be a separable character of $H$, with field
of definition $K$. Then the abelian category 
$H\tmod_{\chi}$ is equivalent to $U_K\tmod$.
\end{theorem}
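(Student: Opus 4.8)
The plan is to combine the structural results already established for blocks with Lemma~\ref{lem:sep}. Let $\chi$ be separable with field of definition $K$; then by Lemma~\ref{lem:sep} we have $L = K$ (where $L := k(\chi\vert_M)$ as in Lemma~\ref{lem:def}), so the pushout construction preceding Proposition~\ref{prop:reduc} takes place over $K$ itself, and the extension
\[ 0 \longrightarrow \bG_{m,K} \longrightarrow H_{\eta} \longrightarrow U_K \longrightarrow 0 \]
\emph{splits}, again by Lemma~\ref{lem:sep} (the equivalence (i)$\Leftrightarrow$(iv)). A choice of splitting identifies $H_{\eta}$ with the direct product $\bG_{m,K} \times U_K$.

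Next I would feed this into Proposition~\ref{prop:reduc}, which gives an equivalence $F : H\tmod_{\chi} \stackrel{\simeq}{\longrightarrow} H_{\eta}\tmod_1$, where $H_{\eta}\tmod_1$ is the block of $H_{\eta}\tmod$ attached to the identity character of $\bG_{m,K}$. Under the identification $H_{\eta} \simeq \bG_{m,K} \times U_K$, a finite-dimensional $H_{\eta}$-module decomposes into weight spaces for $\bG_{m,K}$, each carrying a $U_K$-action; the block $H_{\eta}\tmod_1$ consists of those modules concentrated in the weight-$1$ component (all composition factors isomorphic to the identity character of $\bG_{m,K}$, which by Lemma~\ref{lem:def}~(i) applied to $U_K$ — whose characters are all trivial — forces the whole module to sit in that single weight). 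On such a module $\bG_{m,K}$ acts by the identity character and the residual structure is exactly that of a $U_K$-module; conversely any $U_K$-module, with $\bG_{m,K}$ acting by the identity character, lies in $H_{\eta}\tmod_1$. This yields an equivalence $H_{\eta}\tmod_1 \simeq U_K\tmod$, and composing with $F$ gives the theorem. (Alternatively, one can invoke the principal-block equivalence \eqref{eqn:Umod} applied to the group $H_{\eta}$ over the base field $K$, since the principal block of $H_{\eta}$ — characters trivial on the multiplicative part — is precisely $H_{\eta}\tmod_1$ once the split extension is rewritten so that the multiplicative type part $\bG_{m,K}$ is twisted by $\eta$; but the direct product description is cleaner.)

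I would also remark, for completeness, that the composite equivalence is compatible with the earlier identifications: in the case $k = K$ (equivalently $\chi$ trivial) it recovers \eqref{eqn:Umod}, and more generally it matches the description $\HVec_{A,x} \simeq \UVec_{A_K}$ promised in the introduction once Theorem~\ref{thm:equiv} is invoked to translate blocks of $H_A\tmod$ into blocks $\HVec_{A,x}$ of $\HVec_A$.

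\textbf{Main obstacle.} The genuine content is already packaged in Lemma~\ref{lem:sep} and Proposition~\ref{prop:reduc}; given those, the proof is short. The one point requiring a little care is the passage from $H_{\eta}\tmod_1$ to $U_K\tmod$: one must check that the splitting of $0 \to \bG_{m,K} \to H_{\eta} \to U_K \to 0$ really does identify the block $H_{\eta}\tmod_1$ with all of $U_K\tmod$ — i.e. that no module in that block has a nontrivial $\bG_{m,K}$-weight. This is where Lemma~\ref{lem:def}~(i) (characters of a unipotent group are trivial, so the field of definition collapses) does the work, but it is worth spelling out rather than leaving implicit, since it is the only step that uses unipotence of $U_K$ in an essential way.
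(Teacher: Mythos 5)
Your proposal is correct and follows essentially the same route as the paper's proof: reduce to $H_{\eta}$ over $K = L$ via Proposition~\ref{prop:reduc}, split $H_{\eta} \simeq \bG_{m,K} \times U_K$ via Lemma~\ref{lem:sep}, and identify the block attached to the identity character with $U_K\tmod$. The paper carries out this last step by tensoring with the weight-one character $k_1$ (an equivalence $H\tmod_0 \to H\tmod_1$), which is just a cleaner packaging of your weight-space argument; note only that the fact that a module in the block sits entirely in weight one rests on complete reducibility of $\bG_{m,K}$-modules rather than on Lemma~\ref{lem:def}~(i).
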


\begin{proof}
We may replace $k$ with $K = L$ and $H$ with 
$H_{\eta}$ in view of Proposition \ref{prop:reduc}. 
Then Lemma \ref{lem:sep} yields an isomorphism
$H \simeq \bG_m \times U$. Denote by $k_1$ 
the $1$-dimensional $H$-module with weight $1$;
then the assignment $V \mapsto V \otimes k_1$
extends to the desired equivalence 
\[ U\tmod = H\tmod_0 \stackrel{\simeq}{\longrightarrow}
H\tmod_1. \]
\end{proof}

\section{Isogenies}
\label{sec:isos}

\subsection{Functorial properties of universal affine covers}
\label{subsec:fun}

Consider a morphism of abelian varieties $\varphi: A \to B$.

\begin{proposition}\label{prop:fun}

\begin{enumerate}

\item[{\rm (i)}] There are unique morphisms
$G(\varphi) : G_A \to G_B$, $H(\varphi) : H_A \to H_B$  
such that the diagram of extensions
\begin{equation}\label{eqn:isogeny} \xymatrix{
0 \ar[r] & H_A \ar[r] \ar[d]^{H(\varphi)} 
& G_A \ar[r]^-{f_A} \ar[d]^{G(\varphi)} 
& A  \ar[r] \ar[d]^{\varphi} & 0 \\
0 \ar[r] & H_B \ar[r]  & G_B  \ar[r]^-{f_B} & B \ar[r] & 0 \\
} \end{equation}
commutes. If $\varphi =n_A$ for some integer $n$, then 
$G(\varphi) = n_{G_A}$, $H(\varphi) = n_{H_A}$.
Also, the formations of $G(\varphi)$, $H(\varphi)$
commute with base change under field extensions.

\item[{\rm (ii)}] For any morphism
of abelian varieties $\psi : B \to C$, we have
$G(\psi \circ \varphi) = G(\psi) \circ G(\varphi)$, 
$H(\psi \circ \varphi) = H(\psi) \circ H(\varphi)$.

\item[{\rm (iii)}] If $\varphi$ is an isogeny,
then $G(\varphi)$ is an isomorphism. Moreover,
there is an exact sequence of commutative affine 
group schemes
\begin{equation}\label{eqn:H_AH_B} 
0 \longrightarrow H_A 
\stackrel{H(\varphi)}{\longrightarrow} H_B
\longrightarrow N \longrightarrow 0, 
\end{equation}
where $N := \Ker(\varphi)$.

\end{enumerate}

\end{proposition}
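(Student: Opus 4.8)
The plan is to exploit the universal property of $G_A$ as a projective cover (equivalently, as the universal extension of $A$ by an affine group scheme) established via Proposition \ref{prop:proj} and the isomorphism (\ref{eqn:homext}). For part (i), I would first construct $G(\varphi)$. Pulling back the extension (\ref{eqn:G_A}) for $B$ along $\varphi: A \to B$ gives an extension of $A$ by $H_B$, hence by (\ref{eqn:homext}) a morphism $H_A \to H_B$, which I take to be $H(\varphi)$; the pushout of (\ref{eqn:G_A}) for $A$ along $H(\varphi)$ then maps canonically to the pullback extension, and composing yields $G(\varphi): G_A \to G_B$ making (\ref{eqn:isogeny}) commute. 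Uniqueness of both maps follows because any two candidates differ by a morphism $G_A \to H_B$ (a map into an affine group from a group with $\cO(G_A) = k$, hence trivial) respectively by a morphism $H_A \to H_B$ lifting zero, which must factor through $\Ker(f_B) \cap \Im(\text{something})$ — more cleanly, the difference of two lifts $G(\varphi)$ restricts to a map $G_A \to H_B$, which is trivial since every affine quotient of $G_A$ is trivial, and then the difference of the two $H(\varphi)$'s is forced to vanish by the commutativity of the left square. For $\varphi = n_A$: the maps $n_{G_A}$ and $n_{H_A}$ evidently fit into the diagram (the extension (\ref{eqn:G_A}) is a complex of commutative groups, so multiplication by $n$ is an endomorphism of the whole diagram), so uniqueness gives $G(n_A) = n_{G_A}$ and $H(n_A) = n_{H_A}$. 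Compatibility with base change follows from Proposition \ref{prop:field} and the fact that pullback and pushout of extensions commute with field extension, together with uniqueness applied over $k'$.

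Part (ii) is immediate from uniqueness: both $G(\psi \circ \varphi)$ and $G(\psi) \circ G(\varphi)$ fit into the diagram (\ref{eqn:isogeny}) for $\psi \circ \varphi$, since pasting the two commutative diagrams for $\varphi$ and $\psi$ gives a commutative diagram for the composite; likewise for $H$.

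For part (iii), suppose $\varphi$ is an isogeny with kernel $N$. The key point is that $G(\varphi)$ is an isomorphism. I would argue: $G(\varphi)$ is a morphism of anti-affine quasi-compact group schemes (both $G_A$ and $G_B$ satisfy $\cO = k$ and are geometrically integral, by \S\ref{subsec:univ}); its cokernel $C$ receives a surjection from $B$ via $f_B$, and since the composite $G_A \to G_B \to C$ is trivial (it factors through the image of $A$ in $C$, but that image is a quotient of $A/\varphi(\text{stuff})$...) — more directly, $\Coker(G(\varphi))$ is a quotient of $\Coker(\varphi) = 0$ via a snake-lemma argument on (\ref{eqn:isogeny}), provided $H(\varphi)$ is injective; conversely $\Ker(G(\varphi))$ maps to $\Ker(\varphi) = N$ with kernel $\Ker(H(\varphi))$, and $\Ker(G(\varphi))$ is affine (being an extension of a subgroup of the finite group $N$ by $\Ker(H(\varphi)) \subset H_A$, which is affine), hence trivial since it is also a quotient of the anti-affine $G_A$ (using that $G_B$ is projective, so $G_A \simeq G_B \times \Ker(G(\varphi))$ once surjectivity is known, exactly as in the proof of Proposition \ref{prop:field}). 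So I would run the Proposition \ref{prop:field} argument verbatim: show $G(\varphi)$ is an epimorphism because its cokernel is affine (a quotient of $\Coker H(\varphi)$) while $G_A$ is anti-affine, then split off $\Ker(G(\varphi))$ as a direct factor of the projective $G_B$, observe it is affine, hence trivial. Once $G(\varphi)$ is an isomorphism, the snake lemma applied to (\ref{eqn:isogeny}) gives a four-term exact sequence $0 \to \Ker H(\varphi) \to 0 \to N \to \Coker H(\varphi) \to 0$ (the middle zero being $\Ker G(\varphi)$ and the cokernel of $G(\varphi)$ both vanishing), yielding $\Ker H(\varphi) = 0$ and $\Coker H(\varphi) \simeq N$, which is precisely (\ref{eqn:H_AH_B}).

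The main obstacle I anticipate is organizing the snake-lemma bookkeeping in part (iii) so that the vanishing of $\Ker(G(\varphi))$ and the identification of $\Coker(H(\varphi))$ with $N$ are extracted without circularity — one needs injectivity of $H(\varphi)$ to get surjectivity of $G(\varphi)$ via the cokernel comparison, yet injectivity of $H(\varphi)$ itself seems to come out of the snake lemma only after knowing $G(\varphi)$ is an isomorphism. The resolution is to decouple these: prove $G(\varphi)$ is an isomorphism first by the anti-affine/projective argument (which only uses that $\Ker G(\varphi)$ and $\Coker H(\varphi)$ are affine, not that $H(\varphi)$ is injective — affineness of $\Coker H(\varphi)$ as a quotient of $H_B$ by the image of $H_A$ is automatic since $\tcL$ is a Serre subcategory), and only then deduce everything about $H(\varphi)$ from the snake lemma. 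I would also need to double-check that $\Ker(G(\varphi))$, a priori only known to be quasi-compact, is indeed affine: it is a subgroup scheme of $G_A$ mapping to $\Ker(\varphi) = N$ (finite) via $f_A$, with kernel $\Ker(G(\varphi)) \cap H_A$ affine, so $\Ker(G(\varphi))$ is an extension of a finite group scheme by an affine one, hence affine by the Serre subcategory property of $\tcL$ in $\tcC$.
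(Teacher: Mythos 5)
Your proposal is correct, and parts (i)--(ii) essentially match the paper: the paper obtains existence in (i) in one stroke from projectivity of $G_A$ in $\tcC$ (lifting $\varphi \circ f_A$ through the epimorphism $f_B$), whereas you route through the universal-extension isomorphism (\ref{eqn:homext}) and a pullback/pushout comparison; but your uniqueness argument (any morphism $G_A \to H_B$ vanishes since $\cO(G_A)=k$ and $H_B$ is affine), the deduction of the $n_A$-case and of base-change compatibility from uniqueness plus Proposition \ref{prop:field}, and part (ii) are exactly the paper's. The genuine divergence is in (iii). The paper picks an isogeny $\psi : B \to A$ with $\psi \circ \varphi = n_A$ and $\varphi \circ \psi = n_B$, uses (ii) to get $G(\psi)\circ G(\varphi) = n_{G_A}$ and $G(\varphi)\circ G(\psi) = n_{G_B}$, invokes the fact that $n_{G_A}$ is an isomorphism (citing \cite[Lem.~3.4]{Br18}) to conclude $G(\varphi)$ is an isomorphism, and then applies the snake lemma to get (\ref{eqn:H_AH_B}). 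You instead rerun the anti-affine/projective argument of Proposition \ref{prop:field}: since $\Coker(\varphi)=0$, the snake lemma makes $\Coker G(\varphi)$ a quotient of the affine group $\Coker H(\varphi)$, hence an affine quotient of the anti-affine $G_B$ and therefore trivial (note that it is the anti-affineness of $G_B$, not of $G_A$ as you wrote, that is used here --- a harmless slip since both are anti-affine); then projectivity of $G_B$ splits the epimorphism, exhibiting $\Ker G(\varphi)$ as a direct factor, hence quotient, of $G_A$, so it is anti-affine, while it is also affine as an extension of a subgroup scheme of $N$ by $\Ker H(\varphi)$, hence trivial; the snake lemma then gives $\Ker H(\varphi)=0$ and $\Coker H(\varphi) \simeq N$. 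Your route is a bit longer but self-contained within the paper's toolkit (it avoids the external input that $n_{G_A}$ is an isomorphism), and your worry about circularity is correctly resolved by decoupling the two steps exactly as you describe; the paper's dual-isogeny trick buys brevity at the cost of that one citation.
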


\begin{proof}
(i) The existence of the morphisms $G(\varphi)$, $H(\varphi)$ 
follows from the fact that $G_A$ is projective in the category
$\tcC$ of commutative quasi-compact group schemes. For the
uniqueness, just note that every morphism $G_A \to H_B$
is zero, since $\cO(G_A) = k$ and $H_B$ is affine. 
This uniqueness property implies the assertion on 
multiplication maps. Finally, the assertion on field 
extensions follows from Proposition \ref{prop:field}. 

(ii) This follows again from the uniqueness in (i).

(iii) There exists an isogeny $\psi : B \to A$ such that
$\psi \circ \varphi$ is the multiplication map $n_A$
for some positive integer $n$, and $\varphi \circ \psi = n_B$. 
Then $G(\psi) \circ G(\varphi) = n_{G_A}$ by (ii); moreover,
$n_{G_A}$ is an isomorphism in view of \cite[Lem.~3.4]{Br18}.
Likewise, $G(\varphi) \circ G(\psi) = n_{G_B}$ is an
isomorphism. This yields the assertion on $G(\varphi)$,
and in turn that on $H(\varphi)$ by applying the snake
lemma to the commutative diagram (\ref{eqn:isogeny}).
\end{proof}

Next, assume that $\varphi : A \to B$ is an isogeny 
and let $N:= \Ker(\varphi)$. Then the exact sequence  
(\ref{eqn:H_AH_B}) defines induction and restriction functors
(see \cite[I.3]{Jantzen} for details),
\[ 
\ind = \ind_{H_A}^{H_B} : H_A\tmod \to H_B\tmod,
\quad \res = \res_{H_A}^{H_B} : H_B\tmod \to H_A\tmod. \]

\begin{theorem}\label{thm:isogeny}

\begin{enumerate}
 
\item[{\rm (i)}] For any homogeneous vector bundle
$F$ on $B$, the pullback $\varphi^*(F)$ is a homogeneous
vector bundle on $A$. Moreover, the assignment 
$F \mapsto \varphi^*(F)$ yields an exact functor
$\varphi^* : \HVec_B \to \HVec_A$ which fits in a commutative
square
\[ \xymatrix{
H_B\tmod \ar[r]^{\res} \ar[d]^{\cL_B} & H_A\tmod \ar[d]^{\cL_A} \\
\HVec_B \ar[r]^{\varphi^*} & \HVec_A. \\
} \]

\item[{\rm (ii)}] For any homogeneous vector bundle
$E$ on $A$, the push-forward $\varphi_*(E)$ is a homogeneous
vector bundle on $B$. Moreover, the assignment 
$E \mapsto \varphi_*(E)$ yields an exact functor
$\varphi_* : \HVec_A \to \HVec_B$ which fits in a commutative
square
\[ \xymatrix{
H_A\tmod \ar[r]^{\ind} \ar[d]^{\cL_A} & H_B\tmod \ar[d]^{\cL_B} \\
\HVec_A \ar[r]^{\varphi_*} & \HVec_B. \\
} \]

\end{enumerate}

\end{theorem}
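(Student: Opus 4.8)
The plan is to reduce both parts to Theorem~\ref{thm:equiv}, the characterisation of homogeneity in \S\ref{subsec:hvb}, and Proposition~\ref{prop:fun}.

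\emph{Part (i).} Since $\varphi$ is an isogeny it is finite and faithfully flat, so $\varphi^*$ is exact on coherent sheaves. First one checks that $\varphi^*F$ is homogeneous for every $F \in \HVec_B$: as $\varphi$ is a homomorphism, $\varphi \circ \tau_a = \tau_{\varphi(a)} \circ \varphi$ for all $a \in A(\bar k)$, whence
\[ \tau_a^*\big((\varphi^*F)_{\bar k}\big) \cong \varphi_{\bar k}^*\big(\tau_{\varphi(a)}^*F_{\bar k}\big) \cong \varphi_{\bar k}^*F_{\bar k} = (\varphi^*F)_{\bar k}, \]
which gives homogeneity by \S\ref{subsec:hvb}. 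Thus $\varphi^*$ restricts to an exact functor $\HVec_B \to \HVec_A$. To identify it with $\res$, recall that $G(\varphi) : G_A \to G_B$ covers $\varphi$ (Proposition~\ref{prop:fun}), so the canonical $G_B$-linearization of $F$ pulls back along $G(\varphi)$ to a $G_A$-linearization of $\varphi^*F$. Since $\varphi(0) = 0$, the fiber of $\varphi^*F$ at $0$ is $F_0$; and $H(\varphi)$ carries the stabilizer $H_A$ of $0 \in A$ into the stabilizer $H_B$ of $0 \in B$, so the $H_A$-module $\cM_A(\varphi^*F)$ is $\cM_B(F)$ restricted along $H(\varphi)$. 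Hence $\cM_A \circ \varphi^* \cong \res \circ \cM_B$; composing with $\cL_B$ and using $\cM_B\cL_B \cong \id$, $\cL_A\cM_A \cong \id$ (Theorem~\ref{thm:equiv}) gives $\varphi^* \circ \cL_B \cong \cL_A \circ \res$.

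\emph{Part (ii).} Here the plan is to prove directly that
\[ \varphi_*\big(\cL_A(V)\big) \cong \cL_B\big(\ind_{H_A}^{H_B}V\big) \]
naturally in the finite-dimensional $H_A$-module $V$; the rest then follows. Indeed $\varphi$, being finite flat, makes $\varphi_*$ exact and takes vector bundles to vector bundles, and $\ind_{H_A}^{H_B}V$ is finite-dimensional since $H_B/H_A \cong N$ is a finite group scheme (the induced module being the space of global sections of a rank-$(\dim V)$ bundle on $N$). Granting the displayed isomorphism, essential surjectivity of $\cL_A$ (Theorem~\ref{thm:equiv}) shows every $E \in \HVec_A$ is some $\cL_A(V)$, so $\varphi_*E \cong \cL_B(\ind V)$ is homogeneous; hence $\varphi_*$ restricts to a functor $\HVec_A \to \HVec_B$, exact because $\varphi$ is affine, and the stated square commutes by naturality.

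To prove the displayed isomorphism I would reduce to algebraic groups, as in the proofs of Theorems~\ref{thm:equiv} and~\ref{thm:ext}. Using that $G(\varphi)$ identifies $G_A$ with $G_B$ (Proposition~\ref{prop:fun}), write $G$ for this common group, so that $H_A \subset H_B \subset G$ with $A = G/H_A$, $B = G/H_B$ and $\varphi$ the projection. Choose an algebraic quotient $q : G \to \bar G$ whose kernel $K$ lies in $H_A$ and through which the action on $V$ factors; such quotients are cofinal, because for $L \subset H_A$ with $H_A/L$ algebraic the quotient $H_B/L$ is again algebraic (an extension of the finite group $N$ by $H_A/L$), while any algebraic quotient of $H_B$ is dominated by one of this form (intersect its kernel with $H_A$, which has algebraic quotient as a subgroup of an algebraic group). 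Put $\bar H_A := H_A/K$, $\bar H_B := H_B/K$; then $\bar H_B/\bar H_A \cong N$, one has $\cL_A(V) = \bar G \times^{\bar H_A} V$ and $\cL_B(\ind_{H_A}^{H_B}V) = \bar G \times^{\bar H_B} \ind_{\bar H_A}^{\bar H_B}V$ (since $K$ acts trivially on $V$, the two induced modules coincide), and $\varphi$ is the projection $\bar G/\bar H_A \to \bar G/\bar H_B$. The desired isomorphism is then the standard compatibility of the associated-sheaf functor with induction from a subgroup, $\pi_*\cL_{\bar G/\bar H_A}(V) \cong \cL_{\bar G/\bar H_B}\big(\ind_{\bar H_A}^{\bar H_B}V\big)$ for the affine projection $\pi$ (cf.~\cite[I.5.13, I.5.18]{Jantzen}; the case $\bar H_A = \{e\}$, $V = k$ is used in the proof of Theorem~\ref{thm:ext}). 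The main obstacle is precisely this reduction: exhibiting a single algebraic quotient of the non-algebraic group $G$ through which $V$, $\ind_{H_A}^{H_B}V$ and the torsor $\bar G \to B$ all factor while $\bar H_B/\bar H_A \cong N$ is preserved; past that, the statement is the cited compatibility. As an alternative one may argue homogeneity of $\varphi_*E$ directly — for $b \in B(\bar k)$ pick $a$ with $\varphi(a) = b$, note that the square relating $\tau_a$, $\tau_b$ and the two copies of $\varphi$ is Cartesian, and apply flat base change — and then compute $\cM_B(\varphi_*E) \cong \Gamma\big(\varphi^{-1}(0), E\vert_{\varphi^{-1}(0)}\big)$, which, via the $H_B$-torsor $\bar G \to B$, is identified with $\ind_{H_A}^{H_B}(E_0)$ as an $H_B$-module.
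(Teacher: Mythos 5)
Your proposal is correct. Part (i) is essentially the paper's argument (homogeneity from $\varphi\circ\tau_a=\tau_{\varphi(a)}\circ\varphi$, exactness, and the identification of the square from the definitions, which your linearization-pullback remark makes explicit via the uniqueness in Theorem \ref{thm:univ}). For part (ii) you take a genuinely different route: you prove the isomorphism $\varphi_*\cL_A(V)\simeq\cL_B(\ind_{H_A}^{H_B}V)$ directly, by descending to an algebraic quotient $\bar G$ of $G_A\simeq G_B$ and invoking the compatibility of the associated-sheaf construction with induction along $\bar H_A\subset\bar H_B$. The step you flag as ``the main obstacle'' is not actually an obstacle: taking $K=\Ker\bigl(\rho:H_A\to\GL(V)\bigr)$ gives an algebraic $\bar G=G/K$ exactly as in the paper's own construction of $\cL_A(V)$ in \S\ref{subsec:hvb}, and the cited pushforward--induction compatibility is the same result from \cite[I.5.18]{Jantzen} that the paper uses (in the special case $\bar H_A=1$) in the proof of Theorem \ref{thm:ext}; the inflation compatibility of $\ind$ with the central quotient by $K$ is routine since everything is commutative. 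The paper's own proof of (ii) is shorter and avoids all of this: it proves homogeneity of $\varphi_*(E)$ exactly as in your closing ``alternative'' (the square relating $\tau_a$, $\tau_b$ and $\varphi$), and then obtains the commutative square formally, observing that $\varphi_*$ is right adjoint to $\varphi^*$ and $\ind$ is right adjoint to $\res$, so the isomorphism $\cL_A\circ\res\simeq\varphi^*\circ\cL_B$ from (i) transports to $\cL_B\circ\ind\simeq\varphi_*\circ\cL_A$ by uniqueness of adjoints. What your computation buys in exchange is an explicit, pointwise description of $\varphi_*\cL_A(V)$ (in effect reproving the relevant instance of the adjunction), whereas the paper's argument yields the square with no reduction to algebraic quotients at all; either way the result is established, and only the naturality bookkeeping (choosing a common kernel $K$ for a morphism of modules) needs the same routine care already present in the paper's construction of $\cL_A$ on morphisms.
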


\begin{proof}
(i) Let $a \in A(\bar{k})$. Since $\varphi$
is a group homomorphism, we have
$\tau_a^* \varphi^*(F) \simeq 
\varphi^* \tau_{\varphi(a)}^*(F)$.
As $F$ is homogeneous, it follows that
$\tau_a^* \varphi^*(F) \simeq \varphi^*(F)$.
So $\varphi^*(F)$ is homogeneous as well.

Clearly, the assignment $F \mapsto \varphi^*(F)$
extends to an exact functor $\Vec_B \to \Vec_A$,
and hence to an exact functor 
$\varphi^* : \HVec_B \to \HVec_A$. The commutativity of 
the displayed square follows readily from the 
definitions.

(ii) Since $\varphi$ is finite and flat, it yields
an exact functor $\varphi_* : \Vec_A \to \Vec_B$. 
Let $b \in B(\bar{k})$, and choose $a \in A(\bar{k})$ 
such that $b = \varphi(a)$; then the diagram
\[ \xymatrix{
A \ar[r]^{\tau_a} \ar[d]^{\varphi} & A \ar[d]^{\varphi}  \\
B \ar[r]^{\tau_b}  & B \\
} \]
commutes, where the horizontal arrows are 
isomorphisms. It follows that 
$\tau_b^* \varphi_*(E) \simeq \varphi_* \tau_a^*(E)$.
Thus, $\varphi_*(E)$ is homogeneous. 

By the projection formula, $\varphi_*$ is right
adjoint to $\varphi^*$. Also, recall that $\ind$
is right adjoint to $\res$ (see \cite[I.3.4]{Jantzen}).
So the desired isomorphism of functors
$\cL_B \circ \ind \simeq \varphi_* \circ \cL_A$
follows from the isomorphism 
$\cL_A \circ \res \simeq \varphi^* \circ \cL_B$
in view of the uniqueness of adjoints.
\end{proof}

Next, recall the dual exact sequence
\begin{equation}\label{eqn:dual} 
0 \longrightarrow \rD(N) \longrightarrow \wB
\stackrel{\wvarphi}{\longrightarrow} \wA
\longrightarrow 0, 
\end{equation}
where $\wvarphi$ denotes the dual isogeny, and
$\rD(N)$ the Cartier dual of the finite group scheme 
$N$. Also, recall the natural isomorphism
$\Hom_{\tcL}(H_{A_{k'}},\bG_{m,k'}) \simeq \wA(k')$
for any field extension $k'/k$ (Lemma \ref{lem:H_A}). 
Thus, we may identify the character group 
$\rX(H_A)$ with $\wA(\bar{k})$, and the field of
definition of any $x \in \rX(H_A)$ with the residue
field $k(x)$. In particular, the separable characters
(as defined in \S \ref{subsec:blocks}) correspond to 
the points of $\wA(k_s)$. Also, the morphism 
$H(\varphi) : H_A \to H_B$ defines a pullback map 
$H(\varphi)^* : \rX(H_B) \to \rX(H_A)$.

\begin{lemma}\label{lem:isos}
For any $y \in \wB(\bar{k})$, we have
$H(\varphi)^*(y) = \wvarphi(y)$ in $\wA(\bar{k})$.
\end{lemma}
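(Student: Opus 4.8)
The statement identifies the pullback map $H(\varphi)^* : \rX(H_B) \to \rX(H_A)$ on character groups with the dual isogeny $\wvarphi : \wB(\bar{k}) \to \wA(\bar{k})$, under the identifications $\rX(H_A) \simeq \wA(\bar{k})$ and $\rX(H_B) \simeq \wB(\bar{k})$ furnished by Lemma \ref{lem:H_A}(ii) (or rather by the base-changed form of the isomorphism (\ref{eqn:homext})). The plan is to trace through how these identifications are actually constructed — via pushout of the universal extensions (\ref{eqn:G_A}) together with the Weil--Barsotti formula — and to check that the diagram relating them to $H(\varphi)$ and $\wvarphi$ commutes. Since everything in sight is natural, the content is really a diagram chase; the only care needed is to match up the two different descriptions of the dual isogeny.

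First I would recall that, by (\ref{eqn:homext}) applied over $\bar{k}$ (using Proposition \ref{prop:field} so that $H_{A_{\bar{k}}} = (H_A)_{\bar{k}}$ and likewise for $B$), a character $y \in \rX(H_B) = \Hom_{\tcL}((H_B)_{\bar{k}}, \bG_{m,\bar{k}})$ corresponds to the pushout of the extension $0 \to (H_B)_{\bar{k}} \to (G_B)_{\bar{k}} \to B_{\bar{k}} \to 0$ along $y$, viewed as a class in $\Ext^1_{\tcC_{\bar{k}}}(B_{\bar{k}}, \bG_{m,\bar{k}})$, which Weil--Barsotti identifies with $\wB(\bar{k})$. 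Now $H(\varphi)^*(y) = y \circ H(\varphi)_{\bar{k}}$, and by Proposition \ref{prop:fun}(iii) the map $G(\varphi) : G_A \to G_B$ is an isomorphism while $H(\varphi)$ is the restriction of this isomorphism. So the diagram (\ref{eqn:isogeny}), base-changed to $\bar{k}$, exhibits the universal extension of $A_{\bar{k}}$ as the pullback along $\varphi_{\bar{k}}$ of the universal extension of $B_{\bar{k}}$ (after identifying $G_A \simeq G_B$). Pushing out along $H(\varphi)^*(y) = y \circ H(\varphi)_{\bar{k}}$ therefore gives the same extension of $A_{\bar{k}}$ by $\bG_m$ as first pushing the $B$-extension out along $y$ and then pulling back along $\varphi_{\bar{k}}$ — that is, the functoriality of pushout and pullback of extensions in the commutative setting. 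Hence, in $\Ext$-theoretic terms, $H(\varphi)^*(y)$ is the image of the class $y \in \Ext^1_{\tcC_{\bar{k}}}(B_{\bar{k}}, \bG_m)$ under $\varphi^* : \Ext^1_{\tcC_{\bar{k}}}(B_{\bar{k}}, \bG_m) \to \Ext^1_{\tcC_{\bar{k}}}(A_{\bar{k}}, \bG_m)$.

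It then remains to observe that under the Weil--Barsotti identification the map $\varphi^*$ on $\Ext^1(-, \bG_m)$ is exactly the dual isogeny $\wvarphi : \wB \to \wA$ on $\bar{k}$-points. This is the standard functoriality of the dual abelian variety — $\wB = \Pic^0(B)$ and $\wvarphi = \Pic^0(\varphi)$ is pullback of line bundles, which matches pullback of extensions under the Barsotti--Weil isomorphism (see e.g. \cite[III.17, III.18]{Oort}, or Mumford). Putting these steps together yields $H(\varphi)^*(y) = \wvarphi(y)$, as claimed.

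The main obstacle I anticipate is purely bookkeeping: being careful that the isomorphism $\rX(H_A) \simeq \wA(\bar{k})$ one writes down via (\ref{eqn:homext}) and Weil--Barsotti is the \emph{same} isomorphism that makes $\varphi^*$ on $\Ext^1$ agree with $\wvarphi$ on $\Pic^0$, rather than its negative or its transpose under some other convention. Once the sign/normalization conventions are pinned down, the argument is a formal naturality statement with no substantive difficulty; in particular the isomorphism $G(\varphi) : G_A \xrightarrow{\sim} G_B$ of Proposition \ref{prop:fun}(iii) does all the real work.
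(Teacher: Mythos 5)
Your proposal is correct and follows essentially the same route as the paper: identify $y$ with the pushout of the universal extension via (\ref{eqn:homext}), use the commutative diagram (\ref{eqn:isogeny}) and functoriality of pushout/pullback to see that $H(\varphi)^*(y)$ corresponds to $\varphi^*$ of the class of $y$ in $\Ext^1(-,\bG_m)$, and then invoke the Weil--Barsotti identification of $\varphi^*$ with $\wvarphi$. The only cosmetic difference is that you base-change everything to $\bar{k}$ at the outset (via Proposition \ref{prop:field}), whereas the paper first treats $y\in\wB(k)$ and then reduces the general case to the residue field $k(y)$ using the same compatibility with field extensions; both reductions are legitimate and rest on the same facts.
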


\begin{proof}
Consider first the case where $y \in \wB(k)$.
We then have a commutative diagram with exact rows
\[ \xymatrix{
0 \ar[r] & H_A \ar[r] \ar[d]^{H(\varphi)} 
& G_A \ar[r]^-{f_A} \ar[d]^{G(\varphi)} 
& A  \ar[r] \ar[d]^{\varphi} & 0 \\
0 \ar[r] & H_B \ar[r] \ar[d]^{y}  & G_B  
\ar[r]^-{f_B} \ar[d] & B \ar[r] \ar[d]^{\id} & 0 \\
0 \ar[r] & \bG_m \ar[r] & G \ar[r] & B \ar[r] & 0, \\
} \]
where the bottom line is obtained by pushout. This 
yields a commutative diagram 
\[ \xymatrix{
\Hom_{\tcL}(H_B,\bG_m) 
\ar[r]^-{\partial(B)} \ar[d]^{H(\varphi)^*} 
& \Ext^1_{\cC}(B,\bG_m) \ar[d]^{\varphi^*}  \\
\Hom_{\tcL}(H_A,\bG_m) \ar[r]^-{\partial(A)}  
&  \Ext^1_{\cC}(A,\bG_m), \\
} \]
where the horizontal arrows are isomorphisms
given by pushout. Also, the right vertical arrow 
may be identified with $\wvarphi : \wB(k) \to \wA(k)$. 
So we may rewrite the above square as
\[ \xymatrix{
\Hom_{\tcL}(H_B,\bG_m) 
\ar[r]^-{\simeq} \ar[d]^{H(\varphi)^*} 
& \wB(k) \ar[d]^{\wvarphi}  \\
\Hom_{\tcL}(H_A,\bG_m) \ar[r]^-{\simeq} &  \wA(k), \\
} \]
which yields the assertion in that case.

In the general case, we argue similarly by replacing $k$ 
with the residue field $k(y)$, and using compatibility 
with field extensions (Proposition \ref{prop:fun}). 
\end{proof}

Still considering $x \in \rX(H_A) = \wA(\bar{k})$ with 
residue field $k(x)$, the representation 
of $H_A$ in $k(x)$ (constructed in \S \ref{subsec:irr}) 
yields an associated homogeneous vector bundle 
$E(x) := \cL_A(k(x))$ over $A$, of rank $[ k(x) : k ]$. 
By Theorem \ref{thm:equiv} and Proposition \ref{prop:irr}, 
$E(x)$ is irreducible; moreover, every irreducible 
homogeneous vector bundle on $A$ is obtained 
in this way. Also, given $x' \in \wA(\bar{k})$, we have 
$E(x') \simeq E(x)$ if and only if 
$x' \in \Gamma \cdot x$ (Proposition \ref{prop:gal}).
Otherwise, $\Ext^i_A(E(x),E(x')) = 0$ for any $i \geq 0$,
by Theorem \ref{thm:ext} and Proposition \ref{prop:ext}.

If $x \in \wA(k)$, then $E(x)$ is just the corresponding
(algebraically trivial) line bundle on $A$. For an 
arbitrary residue field $k(x) =: K$, we obtain a line
bundle $L$ on $A_K$, and hence a vector bundle
$\rR_{K/k}(L)$ on $\rR_{K/k}(A_K)$, of rank 
$[K : k]$; one may check that $E(x)$ is the pullback 
of that vector bundle under the canonical immersion
$j_A : A \to \rR_{K/k}(A_K)$ (see \cite[A.5.7]{CGP}).

Likewise, we have irreducible homogeneous vector bundles
$F(y)$ on $B$, indexed by the $\Gamma$-orbits in
$\wB(\bar{k})$. We now determine their pullback to $A$:

\begin{proposition}\label{prop:pull}
Let $y \in \wB(\bar{k})$ and set $x := \wvarphi(y)$.
Then we have $k(x) \subset k(y)$ and
$\varphi^* F(y) \simeq [k(y) : k(x)] \, E(x)$.
\end{proposition}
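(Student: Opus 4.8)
The plan is to translate the statement about vector bundles into a statement about $H_A$-modules via the equivalence $\cL_A$ of Theorem \ref{thm:equiv}, and then to prove that module-theoretic statement directly using the description of irreducible representations in Section \ref{sec:rep} together with the identification of $H(\varphi)^*$ with $\wvarphi$ from Lemma \ref{lem:isos}. By Theorem \ref{thm:isogeny}(i), we have $\varphi^* F(y) \simeq \cL_A(\res^{H_B}_{H_A} k(y))$, where $k(y)$ is the $H_B$-module $\rho(y)$ attached to the character $y \in \rX(H_B) = \wB(\bar{k})$. Since $\cL_A$ is an equivalence, the proposition is equivalent to the assertion that, as $H_A$-modules,
\[ \res^{H_B}_{H_A} k(y) \simeq [k(y):k(x)] \, k(x), \]
where $k(x)$ is the $H_A$-module $\rho(x)$ with $x = \wvarphi(y) = H(\varphi)^*(y)$. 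In particular one must first check $k(x) \subset k(y)$; but this is immediate, since $x = H(\varphi)^*(y)$ is the pullback of the character $y$ along $H(\varphi): H_A \to H_B$, so $x$, viewed as an element of $\cO(H_{A,\bar k}) = \cO(H_A)\otimes_k\bar k$, is the image of $y \in \cO(H_{B,\bar k})$ under $H(\varphi)^*$, hence its field of definition $k(x)$ is contained in $k(y)$.

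For the main identification, first I would analyze the $H_B$-module $k(y)$ restricted along $H(\varphi)$. The character $y$ factors as $H_{B,K'} \to \bG_{m,K'}$ where $K' := k(y)$, and the associated representation $\rho(y)$ is $H_B$ acting on the $k$-vector space $K'$ by multiplication through $H_B \to \rR_{K'/k}(\bG_{m,K'})$. Restricting along $H(\varphi)$, the $H_A$-module structure on $K'$ is given by the composite $H_A \to H_B \to \rR_{K'/k}(\bG_{m,K'})$, which is precisely $\rho$ of the character $H(\varphi)^*(y) = x \in \rX(H_A)$, except that $x$ has field of definition $k(x) \subsetneq K'$ in general. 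So the $H_A$-action on $K'$ factors through $H_A \to \rR_{k(x)/k}(\bG_{m,k(x)})$, and $K'$ becomes a module over $k(x)$ via this factorization — i.e.\ $K' = k(y)$ is a $k(x)$-vector space, of dimension $[k(y):k(x)]$, and the $H_A$-action is $k(x)$-linear and diagonal. Choosing a $k(x)$-basis of $k(y)$ then exhibits $\res^{H_B}_{H_A} k(y) \simeq [k(y):k(x)]\,\rho(x) = [k(y):k(x)]\, k(x)$ as $H_A$-modules; this is exactly the argument already used in the proof of Lemma \ref{lem:def}(iii), applied here to the pair of fields $k(x) \subset k(y)$ instead of $L \subset K$.

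Applying $\cL_A$ and using $\cL_A(k(x)) = E(x)$, $\cL_A(\res^{H_B}_{H_A} k(y)) = \varphi^* F(y)$, and the additivity of $\cL_A$ on direct sums, we obtain $\varphi^* F(y) \simeq [k(y):k(x)]\, E(x)$, as desired. The only genuinely substantive input is Lemma \ref{lem:isos}, which guarantees that the character $H(\varphi)^*(y)$ really is $\wvarphi(y)$ in the identification $\rX(H_A) = \wA(\bar k)$; granting that, the rest is a direct unwinding of the definition of $\rho(\chi)$ via Weil restriction. I expect the main (modest) obstacle to be bookkeeping around fields of definition: one must be careful that $x = H(\varphi)^*(y)$, while living inside $\cO(H_{A,\bar k})$ as the image of $y$, may have a strictly smaller field of definition than $k(y)$ — this is the whole point of the multiplicity $[k(y):k(x)]$ — and that the $H_A$-module structure on $k(y)$ genuinely factors through $\rR_{k(x)/k}(\bG_{m,k(x)})$ rather than through the larger $\rR_{k(y)/k}(\bG_{m,k(y)})$. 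This follows because, by definition of the field of definition, the character $x$ already defines a morphism of $k(x)$-group schemes $H_{A,k(x)} \to \bG_{m,k(x)}$, equivalently a morphism $H_A \to \rR_{k(x)/k}(\bG_{m,k(x)})$, and the $H_A$-action on any $H_A$-stable $k(x)$-line in $k(y)$ agrees with $\rho(x)$; summing over a basis finishes it.
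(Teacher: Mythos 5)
Your proposal is correct and follows essentially the same route as the paper: reduce via Theorem \ref{thm:isogeny} to computing $\res^{H_B}_{H_A}\rho(y)$, identify the associated character with $x=\wvarphi(y)$ by Lemma \ref{lem:isos}, and use the Weil-restriction description of $\rho(\chi)$ (as in Lemma \ref{lem:def}(iii)) to see that the action factors through $\rR_{k(x)/k}(\bG_{m,k(x)})$, giving the multiplicity $[k(y):k(x)]$. Your bookkeeping on fields of definition matches the paper's argument, so no changes are needed.
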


\begin{proof}
Clearly, $k(x) \subset k(y)$. Also, by Theorem 
\ref{thm:isogeny}, $\varphi^* F(y)$ is the homogeneous 
vector bundle associated with the representation of $H_A$ 
in $k(y)$ via the composition 
\[ H_A \stackrel{H(\varphi)}{\longrightarrow}
H_B \stackrel{\rho(y)}{\longrightarrow}
\rR_{k(y)/k}(\bG_{m,k(y)}). \]
The associated character 
$(H_A)_{k(y)} \to \bG_{m,k(y)}$ equals $x$ in view 
of Lemma \ref{lem:isos}. Thus, $H_A$ acts on $L$ via
the corresponding morphism $H_A \to \rR_{k(x)/k}(\bG_{m,k(x)})$.
It follows that 
$\res^{H_B}_{H_A} \, \rho(y) \simeq [k(y) : k(x)] \, \rho(x)$;
this translates into the desired isomorphism.
\end{proof}

The block decomposition (\ref{eqn:blocks}) of $H_A$-mod 
yields a decomposition
\[ 
\HVec_A = \bigoplus_{x \in \wA(\bar{k})/\Gamma} \HVec_{A,x}. 
\]
As a direct consequence of Proposition \ref{prop:pull}, 
we obtain:

\begin{corollary}\label{cor:pull}
$\varphi^* : \HVec_B \to \HVec_A$ takes $\HVec_{B,y}$ to 
$\HVec_{A,\wvarphi(y)}$ for any $y \in \wB(\bar{k})$.
Moreover, $\varphi^*$  preserves semisimplicity.
\end{corollary}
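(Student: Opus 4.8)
The plan is to derive Corollary~\ref{cor:pull} as a formal consequence of Proposition~\ref{prop:pull} together with the block decomposition already established. First I would recall that the block $\HVec_{B,y}$ consists, by definition (and via the equivalence $\cL_B$ of Theorem~\ref{thm:equiv} combined with the decomposition (\ref{eqn:blocks}) transported to $H_B$-mod), of those homogeneous vector bundles on $B$ all of whose composition factors are isomorphic to the irreducible bundle $F(y)$. An exact functor preserves composition series, and $\varphi^*$ is exact by Theorem~\ref{thm:isogeny}(i), so to see that $\varphi^*(\HVec_{B,y}) \subset \HVec_{A,\wvarphi(y)}$ it suffices to check that $\varphi^*(F(y))$ lies in the single block $\HVec_{A,\wvarphi(y)}$. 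But Proposition~\ref{prop:pull} gives $\varphi^* F(y) \simeq [k(y):k(x)]\, E(x)$ with $x = \wvarphi(y)$, and $E(x) = \cL_A(k(x))$ is by construction the simple object of $\HVec_{A,x}$; thus $\varphi^* F(y)$ is a direct sum of copies of that simple object, hence certainly an object of $\HVec_{A,x}$ with $x = \wvarphi(y)$. This yields the first assertion.

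For the statement that $\varphi^*$ preserves semisimplicity, I would argue as follows. Fix a semisimple homogeneous vector bundle $F$ on $B$; by the block decomposition it splits as a finite direct sum $F \simeq \bigoplus_y F(y)^{\oplus n_y}$ over $\Gamma$-orbits $y \in \wB(\bar k)/\Gamma$. Since $\varphi^*$ is additive, $\varphi^*(F) \simeq \bigoplus_y \varphi^*(F(y))^{\oplus n_y}$, and by Proposition~\ref{prop:pull} each $\varphi^*(F(y))$ is a direct sum of copies of $E(\wvarphi(y))$, which is irreducible by Theorem~\ref{thm:equiv} and Proposition~\ref{prop:irr}. Hence $\varphi^*(F)$ is a direct sum of irreducible homogeneous vector bundles, i.e.\ semisimple. (Equivalently, on the representation side: $\res^{H_B}_{H_A}$ of a semisimple $H_B$-module is semisimple because, by Lemma~\ref{lem:isos} and Proposition~\ref{prop:pull}, the restriction of each simple $H_B$-module $k(y)$ to $H_A$ is a multiple of the simple module $k(\wvarphi(y))$.)

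I do not expect any serious obstacle here: the corollary is essentially a bookkeeping consequence of Proposition~\ref{prop:pull} and the already-established block formalism. The only point requiring a word of care is that ``preserves semisimplicity'' must be read correctly — it is the statement that $\varphi^*$ sends semisimple objects to semisimple objects, not that it is exact on some larger category — and that one uses the finiteness of the block decomposition of a given bundle (each homogeneous vector bundle has finitely many composition factors, so only finitely many blocks are involved) to pass from the simple objects to arbitrary semisimple ones via additivity. Given that, the proof is just the two displayed reductions above, and can be written in a couple of lines invoking Proposition~\ref{prop:pull}, Theorem~\ref{thm:isogeny}, and exactness/additivity of $\varphi^*$.
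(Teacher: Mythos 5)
Your proof is correct and is essentially the paper's argument: the paper derives the corollary directly from Proposition \ref{prop:pull} (which gives $\varphi^*F(y) \simeq [k(y):k(\wvarphi(y))]\,E(\wvarphi(y))$), exactly as you do, using exactness of $\varphi^*$ for the block statement and additivity for semisimplicity. No gaps; the bookkeeping you spell out is all that is needed.
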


Next, we describe the pushforward of irreducible 
homogeneous bundles:

\begin{proposition}\label{prop:push}

\begin{enumerate}

\item[{\rm (i)}] Let $x \in \wA(\bar{k})$ and consider 
the block decomposition
\[ \varphi_* \, E(x) = 
\bigoplus_{y \in \wB(\bar{k})/\Gamma} \, F_{x,y}. \]
Then $F_{x,y} \neq 0$ if and only if 
$\wvarphi(y) \in \Gamma \cdot x$. Under this assumption,
$F_{x,y}$ contains a unique copy of $F(y)$; in particular,
$F_{x,y}$ is indecomposable.

\item[{\rm (ii)}] $\varphi_*$ preserves semisimplicity
if and only if $\wvarphi$ is separable.
Under this assumption, $F_{x,y}$ is irreducible for all
$x,y$.

\end{enumerate}

\end{proposition}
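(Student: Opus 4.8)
The plan is to reduce everything to the representation-theoretic side via the equivalences $\cL_A$, $\cL_B$ and the identification $\varphi_* \cong \ind_{H_A}^{H_B}$ of Theorem \ref{thm:isogeny}(ii), together with the exact sequence $0 \to H_A \to H_B \to N \to 0$ of Proposition \ref{prop:fun}(iii). So part (i) amounts to computing the blocks of the induced module $\ind_{H_A}^{H_B} k(x)$, where $x \in \rX(H_A) = \wA(\bar{k})$, in terms of the character group $\rX(H_B) = \wB(\bar{k})$.

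First I would compute the restriction of $\ind_{H_A}^{H_B} k(x)$ to $M_B$ (the multiplicative part of $H_B$), or more simply pass to $\bar{k}$, where $H_{B,\bar{k}} \simeq M_{B,\bar{k}} \times U_{B,\bar{k}}$ and induction becomes transparent. Over $\bar k$ the character lattice pulls back along $H(\varphi)^*$, which by Lemma \ref{lem:isos} is just the dual isogeny $\wvarphi$; hence the $M_{B,\bar{k}}$-weights appearing in $\ind_{H_A}^{H_B} k(x)$ are exactly the $y \in \wB(\bar k)$ with $\wvarphi(y) = x$ (as elements of $\wA(\bar k)$; to get the block statement one replaces $x$ by its $\Gamma$-orbit, giving the condition $\wvarphi(y) \in \Gamma \cdot x$). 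By Mackey/transitivity of induction, or directly from the fact that $H_B / H_A = N$ is finite and $k(x)$ is one-dimensional over $k(x)$, the induced module has dimension $[k(x):k]\cdot |N|$, and the block $F_{x,y}$ is the summand supported on the $\Gamma$-orbit of $y$. For the "unique copy of $F(y)$" claim I would use Frobenius reciprocity: $\Hom_{H_B}(\ind_{H_A}^{H_B} k(x), k(y)) \cong \Hom_{H_A}(k(x), \res k(y))$, and by Lemma \ref{lem:def}(iii) and Lemma \ref{lem:isos} the right-hand side is a one-dimensional vector space over $\End_{H_B} k(y) = k(y)$ precisely when $\wvarphi(y) \in \Gamma\cdot x$ (one should dualize to count copies of $F(y)$ as a sub rather than a quotient, using that $\ind$ is also left adjoint up to a twist, or argue via the injective hull description of Lemma \ref{lem:dec}(iii)); a unique simple quotient (or sub) isomorphic to $F(y)$ forces $F_{x,y}$ indecomposable. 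I expect the main obstacle here to be the bookkeeping over non-separable residue fields: $k(x)$ need not be separable, so $\res^{H_B}_{H_A} k(y)$ is a multiple of $k(x)$ rather than $k(x)$ itself, and one must track the multiplicity $[k(y):k(x)]$ carefully through induction to see that exactly one copy of the simple $F(y)$ survives.

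For part (ii), the "under this assumption" clause is immediate from (i): if $\wvarphi$ is separable then for each $y$ with $\wvarphi(y) \in \Gamma\cdot x$ the extension $k(y)/k(x)$ is separable, and combining Theorem \ref{thm:sep} (or the block structure of Proposition \ref{prop:reduc}) with the computation in (i) shows that $F_{x,y}$, which contains a unique copy of the simple $F(y)$ and has the expected dimension, is forced to be irreducible — indeed $\ind_{H_A}^{H_B} k(x)$ restricted to each block is then a direct sum of copies of $k(y)$ by complete reducibility on the separable locus. For the equivalence "preserves semisimplicity $\iff$ $\wvarphi$ separable": one direction is the above. For the converse, if $\wvarphi$ is inseparable, pick $y$ with $\wvarphi(y) = x$ and $[k(y):k(x)] = p^m > 1$; then $\ind_{H_A}^{H_B} k(x)$ restricted to the block $\HVec_{B,y}$ has $k(y)$-dimension strictly larger than $[k(y):k(y)] = 1$, so by the injective-hull count of Lemma \ref{lem:dec}(iii) and Lemma \ref{lem:inj} it cannot be semisimple (equivalently: $\varphi_* E(x)$ picks up a non-split self-extension coming from the inseparability, detected by $\Ext^1_{H_B}(k(y),k(y)) \neq 0$ via the argument of Remark \ref{rem:blocks}). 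The delicate point is choosing the test object $x$ so that some fiber of $\wvarphi$ genuinely exhibits inseparability; taking $x = \wvarphi(y)$ for a generic $\bar k$-point $y$, or $x = 0$ when $\wvarphi$ itself has nontrivial infinitesimal kernel, should suffice, but verifying non-semisimplicity requires the dimension computation of Lemma \ref{lem:inj}(ii) applied to $H_B$ with the character $y$.
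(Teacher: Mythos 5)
Your treatment of part (i) is essentially the paper's argument: the paper computes $\Hom_{\HVec_B}(F(y),\varphi_*E(x))\simeq\Hom_{\HVec_A}(\varphi^*F(y),E(x))$ via the $\varphi^*\dashv\varphi_*$ adjunction and Proposition \ref{prop:pull}, finds this space has $k$-dimension $[k(y):k]$ exactly when $\wvarphi(y)\in\Gamma\cdot x$, and concludes it is one-dimensional over $\End_{\HVec_B}F(y)\simeq k(y)$; your Frobenius-reciprocity version gives the same count (and using the bundle-level adjunction removes your worry about which adjoint $\ind$ is, since it directly counts copies of $F(y)$ as a subobject, whence the simple socle and indecomposability). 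So (i) is fine.

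Part (ii), however, has a genuine gap in the direction ``$\varphi_*$ preserves semisimplicity $\Rightarrow\wvarphi$ separable.'' Your mechanism is to find $y$ with $\wvarphi(y)=x$ and $[k(y):k(x)]>1$, but such a $y$ need not exist: if $k$ is algebraically closed (or merely perfect), every point of $\wB(\bar k)$ lying over $x$ has $k(y)=k(x)$, while inseparable isogenies (e.g.\ Frobenius) abound; so your main test detects nothing precisely in the basic cases. The fallback you sketch does not repair this: $\Ext^1_{H_B}(k(y),k(y))\neq 0$ holds for \emph{every} isogeny because $U_B\neq 0$ (Remark \ref{rem:blocks}), and Lemma \ref{lem:inj}(ii) concerns semisimplicity of the whole block category, not of the particular object $\varphi_*E(x)$, so neither can distinguish separable from inseparable $\wvarphi$. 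The missing idea, which is the paper's proof, is to apply $\varphi_*$ to the single object $\cO_A$: by Theorems \ref{thm:equiv} and \ref{thm:isogeny}, $\varphi_*\cO_A$ corresponds to the $H_B$-module $\cO(N)$ with $H_B$ acting through its finite quotient $N$, and this is semisimple if and only if $N$ is of multiplicative type, i.e.\ $\rD(N)=\Ker\wvarphi$ is \'etale, i.e.\ $\wvarphi$ is separable.

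The converse direction (``separable $\Rightarrow$ preserves semisimplicity'') is also only asserted in your sketch: ``complete reducibility on the separable locus'' is not an argument, and the unique-copy statement from (i) gives indecomposability, not a dimension bound on $F_{x,y}$. The paper first reduces to $k$ separably closed, using that semisimplicity of an $H$-module is equivalent to semisimplicity over the finite-dimensional coefficient algebra $C^{\vee}$ and hence is insensitive to separable field extensions, and then makes the rank count you are missing: the fiber $\wvarphi^{-1}(x)$ consists of $n=\deg\varphi$ distinct points $y_1,\dots,y_n$ with $k(y_i)=k(x)$ (separable because $\wvarphi$ is \'etale, purely inseparable because $k=k_s$), so $\bigoplus_i F(y_i)\subset\varphi_*E(x)$ already has rank $n\,[k(x):k]=\rk\,\varphi_*E(x)$, forcing $\varphi_*E(x)\simeq\bigoplus_iF(y_i)$. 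Some such dimension comparison is indispensable; without it the semisimplicity and the irreducibility of the $F_{x,y}$ do not follow.
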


\begin{proof}
(i) Let $y \in \wB(\bar{k})$. Then
\[ \Hom_{\HVec_B}(F(y),\varphi_* E(x)) \simeq
\Hom_{\HVec_A}(\varphi^* F(y), E(x)) \]
\[ \simeq [ k(y) : k(\wvarphi(y) ] \, 
\Hom_{\HVec_A}(E(\wvarphi(y)),E(x)), \] 
where the first isomorphism holds by adjunction,
and the second one follows from Proposition
\ref{prop:pull}. Thus, 
\[ \dim_k \Hom_{\HVec_B}(F(y),\varphi_* E(x)) = 
[k(y) : k( \gamma \cdot x) ] \, [k(\gamma \cdot x) : k]
= [k(y) : k] \]
if $\wvarphi(y) = \gamma \cdot x$ for some
$\gamma \in \Gamma$, and this dimension is zero otherwise. 
Since the $k$-vector space 
$\Hom_{\HVec_B}(F(y),\varphi_* E(x))$ is a module under
$\End_{\HVec_B} k(y) \simeq k(y)$, we see that
$F_{x,y}$ contains a unique copy of $F(y)$ if 
$\wvarphi(y) \in \Gamma \cdot x$, and is zero otherwise.

(ii) Assume that $\varphi_*$ preserves semisimplicity; 
in particular, the homogeneous bundle $\varphi_*(\cO_A)$ 
is semisimple. By Theorems \ref{thm:equiv} and 
\ref{thm:isogeny}, it follows that the $H_B$-module 
$\cO(H_B/H_A) = \cO(N)$ is semisimple as well. 
Equivalently, $N$ is of multiplicative type (see 
\cite[II.2.2.2, IV.3.3.6]{DG}), i.e., the Cartier dual
$\rD(N)$ is \'etale. In view of the exact sequence
(\ref{eqn:dual}), this means that $\wvarphi$ is separable.

Conversely, assume that $\wvarphi$ is separable; then
$N$ is of multiplicative type, as seen by reverting the
above arguments. Note that $\varphi_*$ preserves 
semisimplicity if and only if so does $\ind_{H_A}^{H_B}$ 
(Theorem \ref{thm:isogeny}). We also need a general 
observation: let $H$ be an affine group scheme
and $V$ an $H$-module; then $V$ is semisimple
if and only if the $H_{k_s}$-module $V_{k_s}$ is 
semisimple. Indeed, $V$ is a module under the 
finite-dimensional algebra $C^{\vee}$ constructed 
in the proof of Proposition \ref{prop:irr}; 
moreover, $V$ is semisimple as an $H$-module 
if and only if it is semisimple as a $C^{\vee}$-module 
(as follows from \cite[II.2.2.4]{DG}).
Since the formation of $C^{\vee}$ commutes with field 
extensions, this implies the observation by using the 
invariance of semisimplicity under separable extensions 
(see \cite[VIII.13.4]{Bourbaki}). 

In view of this observation, we may assume that
$k$ is separably closed. The fiber of $\wvarphi$
at any $x \in \wA(\bar{k})$ has $n$ distinct $\bar{k}$-points
$y_1,\ldots,y_n$, where 
$n = \deg(\wvarphi) = \deg(\varphi)$. Moreover,
$k(y_i) = k(x)$ for $i = 1,\ldots,n$: indeed,
$k(y_i)$ is a separable extension of $k(x)$
as $\wvarphi$ is \'etale, and $k(y_i)/k(x)$
is purely inseparable as $k = k_s$. By (i),
$\varphi_* \, E(x)$ contains a sub-bundle
isomorphic to $\oplus_{i = 1}^n \, F(y_i)$.
Since 
\[ \rk \, \varphi_* \, E(x) = n \, [ k(x) : k ]
= \sum_{i=1}^n [ k(y_i) : k], \]
it follows that 
$\varphi_* \, E(x) \simeq \oplus_{i = 1}^n \, F(y_i)$.
This completes the proof of preservation of
semisimplicity under $\varphi_*$. The irreducibility 
of $F_{x,y}$ follows in view of (i).
\end{proof}

\begin{remark}\label{rem:push}
To determine the pushforward of irreducible homogeneous
vector bundles under an arbitrary isogeny $\varphi$,
consider the exact sequence
\[ 0 \longrightarrow M \longrightarrow N 
\longrightarrow U \longrightarrow 0, \]
where $M$ is of multiplicative type and $U$ is unipotent.
This yields a factorization 
$\varphi = \varphi_u \circ \varphi_m$, 
where $\varphi_m: A \to A/M$ has kernel $M$ and 
$\varphi_u:  A/M \to B$ has kernel $U$, and a dual 
factorization $\wvarphi = \wvarphi_m \circ \wvarphi_u$,
where $\wvarphi_m$ is separable and $\wvarphi_u$
is purely inseparable. As the pushforward 
$(\varphi_m)_*\, E(x)$ is described by Proposition 
\ref{prop:push}, we may replace $\varphi$ with 
$\varphi_u$, and hence assume that $N = U$
is unipotent. Then $\wvarphi$ is bijective on 
$\bar{k}$-points; by Proposition \ref{prop:push}
again, it follows that $\varphi_* \, E(x)$
is indecomposable for any $x \in \wA(\bar{k})$.
\end{remark}

\subsection{Unipotent vector bundles}
\label{subsec:unip}

Recall that a vector bundle $E$ over $A$ is \emph{unipotent} if it
admits a filtration 
$0 = E_0 \subset E_1 \subset \cdots \subset E_n = E$,
where each $E_i$ is a sub-bundle and $E_i/E_{i-1} \simeq \cO_A$
for $i = 1, \ldots, n$. The unipotent vector bundles form a full
subcategory $\UVec_A$ of $\Vec_A$.

\begin{theorem}\label{thm:unip}

\begin{enumerate}

\item[{\rm (i)}] $\UVec_A = \HVec_{A,0} \simeq U_A\tmod$;
in particular, $\UVec_A$ has a unique 
(up to isomorphism) simple object, $\cO_A$. Moreover,
we have isomorphisms of graded algebras
\[ \Ext^*_{\UVec_A}(\cO_A,\cO_A) 
\simeq \rH^*(A,\cO_A) \simeq 
\Lambda^* \rH^1(A,\cO_A). \]

\item[{\rm (ii)}] $\UVec_A$ is an abelian tensor subcategory
of $\Vec_A$, stable under extensions and direct summands.

\item[{\rm (iii)}] We have an equivalence of abelian categories 
$\HVec_{A,x} \simeq \UVec_{A_{k(x)}}$ for any $x \in \wA(k_s)$.

\end{enumerate}

\end{theorem}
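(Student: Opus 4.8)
The plan is to deduce all three parts from the equivalence $\cL_A\colon H_A\tmod \xrightarrow{\sim}\HVec_A$ of Theorem \ref{thm:equiv}, the Ext-comparison of Theorem \ref{thm:ext}, Theorem \ref{thm:sep}, and the block formalism of \S\ref{subsec:blocks}. For (i) I would first identify $\UVec_A$ with the principal block $\HVec_{A,0}$ inside $\Vec_A$. If $E$ is unipotent, an induction on the length of a sub-bundle filtration with subquotients $\cO_A=\cL_A(k)$, using that $\HVec_A$ is stable under extensions (Corollary \ref{cor:ext}), shows $E\in\HVec_A$; writing $E=\cL_A(V)$, that filtration transports to a filtration of the $H_A$-module $V$ with all subquotients the trivial module, so $V\in H_A\tmod_0$ and hence $E\in\HVec_{A,0}$. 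Conversely, any object of $\HVec_{A,0}$ is $\cL_A(V)$ with $V$ admitting a composition series all of whose factors are the trivial module (the unique simple object of $H_A\tmod_0$); applying the exact functor $\cL_A$ and using $\cL_A(k)=\cO_A$ yields a unipotent bundle, and shows at the same time that $\cO_A$ is the unique simple object of $\UVec_A$. Combined with the equivalence \eqref{eqn:Umod} for $H=H_A$, this gives $\UVec_A=\HVec_{A,0}\simeq U_A\tmod$.

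For the Ext-algebra statement in (i): since $\UVec_A=\HVec_{A,0}$ is a block of $\HVec_A\simeq H_A\tmod$ it is a direct factor, so $\Ext^i_{\UVec_A}(\cO_A,\cO_A)\cong\Ext^i_{H_A\tmod}(k,k)$; the latter coincides with the rational cohomology $\rH^i(H_A,k)$ by the standard comparison of finite-module Ext with rational cohomology over an affine group scheme (cf.\ \cite[I.4]{Jantzen}), and Theorem \ref{thm:ext} with $V=W=k$ (using $\cL_A(k)=\cO_A$) identifies it with $\Ext^i_A(\cO_A,\cO_A)=\rH^i(A,\cO_A)$. Finally $\rH^*(A,\cO_A)$ is the exterior algebra over $\rH^1(A,\cO_A)$ since $A$ is anti-affine (cf.\ \cite[Thm.~1.1]{Br13}, or classically); I would check that all these isomorphisms are multiplicative, which follows from $\cL_A$ being a tensor functor sending the trivial module to $\cO_A$.

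Part (ii) is then formal: $\UVec_A=\HVec_{A,0}$, being a block of the abelian category $\HVec_A$, is an abelian subcategory stable under direct summands; it is stable under extensions because $\HVec_A$ is (Corollary \ref{cor:ext}) and an extension of modules with trivial composition factors again has trivial composition factors; and it is stable under tensor product because the tensor product of two unipotent $H_A$-modules is unipotent (filtering one factor with trivial subquotients filters the tensor product with subquotients isomorphic to the other factor). For (iii), fix $x\in\wA(k_s)$; under the identification $\rX(H_A)=\wA(\bar{k})$ of \S\ref{subsec:fun}, with field of definition the residue field $k(x)$, the point $x$ is a separable character of $H_A$. The block decomposition of $\HVec_A$ transports through $\cL_A$ to that of $H_A\tmod$, so $\HVec_{A,x}\simeq H_A\tmod_x$; Theorem \ref{thm:sep} gives $H_A\tmod_x\simeq (U_A)_{k(x)}\tmod$; the formation of $U_A$ commutes with the extension $k(x)/k$ (Proposition \ref{prop:field} together with the base-change compatibility of the sequence \eqref{eqn:H_A}), so $(U_A)_{k(x)}=U_{A_{k(x)}}$; and part (i) over the base field $k(x)$ gives $U_{A_{k(x)}}\tmod\simeq\UVec_{A_{k(x)}}$. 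Chaining these equivalences of abelian categories yields $\HVec_{A,x}\simeq\UVec_{A_{k(x)}}$.

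The bulk of the argument is bookkeeping with equivalences and block decompositions already established; the one delicate point is the Ext-algebra computation in (i), which forces one to pass carefully between the Yoneda Ext in the abelian category $\UVec_A$, the rational cohomology of $H_A$ (via $\cL_A$, the block structure, and the finite-versus-all-modules comparison), and the coherent-sheaf Ext on $A$ (via Theorem \ref{thm:ext}), and then to invoke the exterior-algebra structure of $\rH^*(A,\cO_A)$ while keeping track of multiplicativity.
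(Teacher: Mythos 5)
Your proposal is correct and follows essentially the same route as the paper: unipotent bundles are homogeneous via Corollary \ref{cor:ext}, $\UVec_A$ is identified with the principal block and hence with $U_A\tmod$ via Theorem \ref{thm:equiv} and (\ref{eqn:Umod}), the Ext-algebra is computed by combining Theorem \ref{thm:ext} with the exterior-algebra structure of $\rH^*(A,\cO_A)$, (ii) follows from Corollary \ref{cor:abe}, and (iii) from Theorem \ref{thm:sep} together with base-change compatibility. The only differences are cosmetic: you spell out the block and filtration bookkeeping (and the multiplicativity caveat) that the paper leaves implicit, and you cite \cite[Thm.~1.1]{Br13} where the paper invokes \cite[Thm.~VII.10]{Se97}.
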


\begin{proof}
(i) By Corollary \ref{cor:ext}, every unipotent vector bundle
is homogeneous. This yieds the equality 
$\UVec_A = \HVec_{A,0}$; the latter category is equivalent 
to $U_A\tmod$ by Theorem \ref{thm:equiv} and
(\ref{eqn:Umod}). This shows the first assertion. The second 
assertion is obtained by combining Theorem \ref{thm:ext} 
and \cite[Thm.~VII.10]{Se97}.

(ii) This follows from (i) by using Corollary \ref{cor:abe}. 

(iii) This is a direct consequence of Theorem \ref{thm:sep}. 
\end{proof}

\begin{remark}\label{nilpotent}
The unipotent vector bundles over an elliptic curve $A$
have been determined by Atiyah (when $k$ is algebraically
closed), in the process of his description of all vector bundles
over $A$; see \cite{Atiyah}. In particular, there is a unique
indecomposable unipotent bundle of rank $r$ for any integer 
$r \geq 0$. The decomposition of the tensor product 
of any two such bundles has been determined in \cite{Atiyah} 
when $p = 0$; the case where $p >0$ has been treated
much more recently by Schroer (see \cite{Schroer}).

Returning to an abelian variety $A$ over an arbitrary field 
$k$, recall that $U_A \simeq (\bG_a)^g$ if $p = 0$. 
As a consequence, the category $\UVec_A$ is equivalent 
to the category with objects the tuples $(r,X_1,\ldots,X_g)$, 
where $r$ is a non-negative integer and $X_1,\ldots,X_g$
are commuting nilpotent $r \times r$ matrices with
coefficients in $k$; the morphisms from 
$(r,X_1,\ldots,X_g)$ to $(s,Y_1,\ldots, Y_g)$ are 
the $s\times r$ matrices $Z$ with coefficients in $k$ 
such that $Z X_i = Y_i Z$ for $i = 1,\ldots,g$. 
An explicit description of the isomorphism classes
of such tuples is well-known for $g = 1$, via the Jordan
canonical form (which gives back the above results of
Atiyah). But the higher-dimensional case is quite open; 
see \cite{HH} for a study of the moduli space of ``regular'' 
tuples.

This description of $\UVec_A$ in terms of linear algebra
extends to an ordinary abelian variety $A$ over 
a separably closed field $k$ of characteristic $p > 0$, 
since we then have $U_A \simeq (\bZ_p)^g_k$ 
by (\ref{eqn:U_A}).

\end{remark}

\begin{proposition}\label{prop:uisogeny}
Let $\varphi : A  \to B$ be an isogeny with kernel $N$.

\begin{enumerate}

\item[{\rm (i)}] $\varphi^* : \HVec_B \to \HVec_A$ takes 
$\UVec_B$ to $\UVec_A$. If $\varphi$ is separable, then 
this yields an equivalence of categories 
$\UVec_B \simeq \UVec_A$.

\item[{\rm (ii)}] $\varphi_* : \HVec_A \to \HVec_B$ takes $\UVec_A$ 
to $\bigoplus_{y \in \rX(N)/\Gamma} \HVec_{B,y}$.
In the resulting decomposition of $\varphi_*(\cO_A)$,
each summand is indecomposable.

\end{enumerate}

\end{proposition}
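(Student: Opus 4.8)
The plan is to transport the whole statement to the representation-theoretic side: via the equivalences $\cL_A,\cL_B$ of Theorem~\ref{thm:equiv} and the identifications $\varphi^*\leftrightarrow\res_{H_A}^{H_B}$, $\varphi_*\leftrightarrow\ind_{H_A}^{H_B}$ of Theorem~\ref{thm:isogeny}, together with the exact sequence $0\to H_A\to H_B\to N\to 0$ of Proposition~\ref{prop:fun}(iii). Recall from Theorem~\ref{thm:unip}(i) that $\UVec_A=\HVec_{A,0}\simeq U_A\tmod$ is the principal block of $\HVec_A$, and likewise for $B$. The first assertion of (i) is then immediate: $\varphi^*$ is exact and $\varphi^*(\cO_B)=\cO_A$, so it carries iterated extensions of $\cO_B$ to iterated extensions of $\cO_A$; alternatively, apply Corollary~\ref{cor:pull} with $y=0$ (note $\wvarphi(0)=0$).

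For the equivalence in (i), I would first identify, under $\cL_A$ and $\cL_B$, the restriction of $\varphi^*$ to principal blocks with restriction along the homomorphism $\iota\colon U_A\to U_B$ induced by $H(\varphi)$. The key structural input is that $M_A=H_A\cap M_B$ inside $H_B$ (the intersection of a group of multiplicative type with the largest subgroup of multiplicative type of $H_A$); consequently the composite $H_A\hookrightarrow H_B\twoheadrightarrow U_B$ factors through $\iota$, which is therefore a monomorphism with $\Coker(\iota)\cong H_B/(H_A+M_B)$. This cokernel is at once a quotient of $H_B/H_A=N$ and of $H_B/M_B=U_B$, hence unipotent; since $N$ is of multiplicative type (this is where the separability hypothesis enters) it follows that $\Coker(\iota)=0$, so $\iota$ is an isomorphism. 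Then restriction $U_B\tmod\to U_A\tmod$ is an equivalence, and hence so is $\varphi^*\colon\UVec_B\to\UVec_A$. Conversely, a tensor equivalence $\UVec_B\simeq\UVec_A$ would force $\iota$, hence $U_A\to U_B$, to be an isomorphism by Tannakian reconstruction, so a hypothesis of this kind is genuinely needed. The snake lemma applied to the morphism of short exact sequences $0\to M_\bullet\to H_\bullet\to U_\bullet\to 0$ for $\bullet=A$ and $\bullet=B$ packages the computations of $\Ker(\iota)$ and $\Coker(\iota)$ cleanly.

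For (ii), Theorems~\ref{thm:equiv} and~\ref{thm:isogeny}(ii) give $\varphi_*(\cO_A)=\cL_B(\ind_{H_A}^{H_B}k)=\cL_B(\cO(N))$, where $H_B$ acts on $\cO(N)$ through the regular representation of $N=H_B/H_A$. By Proposition~\ref{prop:irr} the simple $N$-modules are the $k(\chi)$, $\chi\in\rX(N)$, and under the inclusion $\rX(N)=\rD(N)(\bar{k})\hookrightarrow\wB(\bar{k})=\rX(H_B)$ furnished by the exact sequence~(\ref{eqn:dual}) these are precisely the simple $H_B$-modules lying in the blocks indexed by $\rX(N)/\Gamma$. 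Hence $\cO(N)$, and therefore $\varphi_*(\cO_A)$, lies in $\bigoplus_{y\in\rX(N)/\Gamma}\HVec_{B,y}$; since any $E\in\UVec_A$ is an iterated extension of copies of $\cO_A$, since $\varphi_*$ is exact, and since a sum of blocks is stable under extensions in $\HVec_B$, $\varphi_*(E)$ lies in that sum of blocks for every $E\in\UVec_A$. Finally, the block decomposition $\varphi_*(\cO_A)=\bigoplus_{y\in\rX(N)/\Gamma}F_{0,y}$ and the indecomposability of each summand are exactly Proposition~\ref{prop:push}(i) with $x=0$: there $F_{0,y}\neq 0$ precisely when $\wvarphi(y)\in\Gamma\cdot 0=\{0\}$, i.e.\ $y\in\rX(N)$, and each nonzero $F_{0,y}$ contains a unique copy of $F(y)$, hence is indecomposable.

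The one genuinely delicate step is the equivalence in (i): the rest is formal once Theorems~\ref{thm:equiv},~\ref{thm:isogeny},~\ref{thm:unip} and Proposition~\ref{prop:push} are in hand. There one must correctly match $\varphi^*|_{\UVec_B}$ with restriction along $U_A\hookrightarrow U_B$ and verify that this inclusion is an isomorphism, which comes down to $N=H_B/H_A$ contributing nothing to the unipotent quotient, i.e.\ to $N$ being of multiplicative type.
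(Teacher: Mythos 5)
Your reduction of the equivalence in (i) to the assertion that $\iota\colon U_A\to U_B$ is an isomorphism is the right move, and your snake-lemma computation is correct as far as it goes: $\iota$ is a monomorphism whose cokernel is the largest unipotent quotient of $N$. The gap is the sentence ``since $N$ is of multiplicative type (this is where the separability hypothesis enters)''. Separability of $\varphi$ means that $N$ is \emph{\'etale}; it does not make $N$ of multiplicative type. In characteristic $p>0$ these conditions differ: $N$ is of multiplicative type if and only if $\rD(N)$ is \'etale, i.e.\ if and only if $\wvarphi$ is separable (exactly the dichotomy used in the proof of Proposition \ref{prop:push}(ii)), whereas an \'etale kernel such as $N=\bZ/p\bZ\subset A[p]$ (available whenever $A$ has positive $p$-rank, e.g.\ for an ordinary elliptic curve over $\bar{k}$) is unipotent. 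For such an $N$ your own computation gives $\Coker(\iota)\cong N\neq 0$, so the argument stops there; and indeed $\varphi^*$ is then genuinely not an equivalence on unipotent bundles: by Theorem \ref{thm:unip}(i) an equivalence would carry $\Ext^1_{\UVec_B}(\cO_B,\cO_B)\cong\rH^1(B,\cO_B)$ isomorphically onto $\rH^1(A,\cO_A)$, but this pullback map is $\Lie(\wvarphi)$, which vanishes because $\Ker(\wvarphi)=\mu_p$ is infinitesimal; so a nonsplit self-extension of $\cO_B$ pulls back to a split one.

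So what your argument actually establishes is the statement under the hypothesis ``$N$ is of multiplicative type'', equivalently ``$\wvarphi$ is separable''. It is fair to add that the paper's own one-line proof makes the same leap: it asserts that separability of $\varphi$ together with the sequence (\ref{eqn:H_AH_B}) ``yields an isomorphism $U_A\simeq U_B$'', which, as your computation shows, requires the unipotent quotient of $N$ to vanish and hence the separability of $\wvarphi$ rather than of $\varphi$. In effect you have reproduced the intended argument and made visible exactly where the hypothesis enters; with the hypothesis read on the dual isogeny, your proof of the equivalence in (i) is complete. The remaining parts raise no issues and coincide with the paper's proof: the first assertion of (i) follows from exactness of $\varphi^*$ and $\varphi^*(\cO_B)=\cO_A$, and (ii) follows from exactness of $\varphi_*$, the identification $\varphi_*(\cO_A)=\cL_B(\cO(N))$, and Proposition \ref{prop:push}(i) applied with $x=0$.
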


\begin{proof}
(i) The first assertion holds since $\varphi^*$ is
exact and takes $\cO_B$ to $\cO_A$. The second 
assertion follows from Theorem \ref{thm:isogeny} 
in view of the exact sequence (\ref{eqn:H_AH_B}), 
which yields an isomorphism 
$U_A \stackrel{\simeq}{\longrightarrow} U_B$.

(ii) This follows similarly from the exactness
of $\varphi_*$ and Proposition \ref{prop:push}.
\end{proof}

\begin{remark}\label{rem:ST}
Assume that $p > 0$; then the $n$th relative Frobenius morphism 
$\rF_A^n$ is a purely inseparable isogeny of degree $p^{ng}$. 
By Theorem \ref{thm:isogeny} and Proposition \ref{prop:uisogeny}, 
the decomposition of $(\rF_A^n)_* \, \cO_A$ into indecomposable 
summands corresponds to the block decomposition of $\cO(N)$, 
where $N := \Ker(\rF_A^n)$ is an infinitesimal group scheme
of order $p^{ng}$. Denoting by $r$ the $p$-rank of $A$, 
the largest subgroup scheme of multiplicative type of $N$ 
is a $k$-form of $(\mu_{p^n})^r$ in view of  (\ref{eqn:ptors}). 
When (say) $k$ is separably closed, it follows that each block
of $\cO(N)$ has dimension $p^{n(g-r)}$; equivalently,
$(\rF_A^n)_* \, \cO_A$ is the direct sum of $p^{nr}$
indecomposable summands of dimension $p^{n(g-r)}$. 
This gives back a recent result of Sannai and Tanaka 
(see \cite[Thm.~1.2]{ST}).

The analogous decomposition of $(\rF_A^n)_*(L)$, 
obtained in \cite[Thm.~5.3]{ST} for any $L \in \wA(k)$,  
can also be derived from Proposition \ref{prop:push}
and Remark \ref{rem:push}.
\end{remark}

Still assuming $p >0$, and denoting by
$\rV^n_A : A^{(p^n)} \to A$ the $n$th Verschiebung, 
we obtain a characterization of unipotent vector bundles 
which refines a result of Miyanishi (see 
\cite[Rem.~2.4]{Miyanishi}):

\begin{proposition}\label{prop:V}
Let $E$ be a vector bundle over $A$. Then
$E$ is unipotent if and only if $(\rV^n_A)^*\, E$ 
is trivial for $n \gg 0$.
\end{proposition}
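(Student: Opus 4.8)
The plan is to prove both implications by elementary means: faithfully flat descent along $\rV^n_A$ for the sufficiency, and the vanishing of the Frobenius pullback on $\rH^1(-,\cO)$ for the necessity; the machinery of \S\ref{sec:univ} is not needed. Observe first that if $(\rV^n_A)^* E$ is trivial for a single $n$, then it is trivial for every larger one, since $(\rV^{n+1}_A)^* E = (\rV_{A^{(p^n)}})^*(\rV^n_A)^* E$; so ``for $n \gg 0$'' may be read as ``for some $n$''.

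For the ``if'' direction, fix $n$ with $(\rV^n_A)^* E \cong \cO_{A^{(p^n)}}^{\oplus r}$ and set $N := \Ker(\rV^n_A)$, a finite group scheme; the isogeny $\rV^n_A : A^{(p^n)} \to A$ is an fppf $N$-torsor. Since $\cO(A^{(p^n)}) = k$, faithfully flat descent along this torsor identifies the full subcategory of $\Vec_A$ of bundles that become trivial on $A^{(p^n)}$ with $N\tmod$: an $N$-equivariant structure on $\cO_{A^{(p^n)}}^{\oplus r}$ is a homomorphism $N \to \GL_r$ because $\Aut_{A^{(p^n)}}(\cO_{A^{(p^n)}}^{\oplus r}) = \GL_r(k)$, and the bundle attached to $V \in N\tmod$ is the associated bundle $A^{(p^n)} \times^N V$. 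Thus $E \cong A^{(p^n)} \times^N V$ for some $V$. The Cartier dual of $N = \Ker(\rV^n_A)$ equals $\Ker(\rF^n_{\wA})$, by the duality between Frobenius and Verschiebung, hence is infinitesimal, so $N$ has no nontrivial character and, by Proposition \ref{prop:irr}, its unique irreducible module is the trivial one. Therefore $V$ is a successive extension of trivial modules, and applying the exact functor $A^{(p^n)} \times^N (-)$ — which sends the trivial module to $\cO_A$ — exhibits $E$ as a successive extension of copies of $\cO_A$, i.e., as a unipotent bundle.

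For the ``only if'' direction, induct on the length $m$ of a unipotent filtration of $E$; the cases $m \le 1$ are clear. For $m \ge 2$, pick a sub-bundle $E' \subset E$ with $E'$ unipotent of length $m-1$ and $E/E' \cong \cO_A$, and choose $n_0$ with $(\rV^{n_0}_A)^* E' \cong \cO_B^{\oplus(m-1)}$, where $B := A^{(p^{n_0})}$. Flat pullback of $0 \to E' \to E \to \cO_A \to 0$ gives a short exact sequence
\[ 0 \longrightarrow \cO_B^{\oplus(m-1)} \longrightarrow (\rV^{n_0}_A)^* E \longrightarrow \cO_B \longrightarrow 0 \]
whose class lies in $\Ext^1_B(\cO_B,\cO_B)^{\oplus(m-1)} = \rH^1(B,\cO_B)^{\oplus(m-1)}$. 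The key point is that $\rV^*_B : \rH^1(B,\cO_B) \to \rH^1(B^{(p)},\cO_{B^{(p)}})$ vanishes: under the canonical identification $\rH^1(B,\cO_B) = \Lie(\wB)$, this map is $\Lie$ of the dual isogeny $\widehat{\rV_B} = \rF_{\wB} : \wB \to \wB^{(p)}$, and a relative Frobenius morphism has zero differential. Hence $(\rV^{n_0+1}_A)^* E = \rV^*_B\big((\rV^{n_0}_A)^* E\big)$ is a split extension, so it is trivial; this closes the induction with $n = n_0 + 1$.

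The single delicate ingredient is the vanishing of $\rV^*_B$ on $\rH^1(B,\cO_B)$ (equivalently, the nullity of the differential of Frobenius on $\wB$); everything else is routine descent and homological algebra, the main nuisance being the bookkeeping of the Frobenius twists $A^{(p^m)}$ and the compatibility $\rV^{n_0+1}_A = \rV^{n_0}_A \circ \rV_{A^{(p^{n_0})}}$. If one prefers to stay within the representation-theoretic setup, the ``only if'' direction also follows from Theorem \ref{thm:unip}, Theorem \ref{thm:isogeny}(i), and the identification $U_A \cong \lim_{\leftarrow} \Ker(\rV^n_A)$ of \S\ref{subsec:univ} and Remark \ref{rem:formal}, which forces a finite-dimensional $U_A$-module to factor through some $\Ker(\rV^n_A)$; the elementary argument above sidesteps having to track that identification.
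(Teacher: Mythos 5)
Your argument is correct, and it genuinely differs from the paper's proof, which stays inside the representation-theoretic framework throughout: there the forward direction writes a unipotent $E$ as $\cL_A(V)$ for an $H_A$-module $V$ killed by $M_A$, so that the representation factors through a unipotent algebraic quotient of $H_A$, on which the iterated Verschiebung vanishes for $n \gg 0$, and this is transported through the diagram of Verschiebungs for $0 \to H_A \to G_A \to A \to 0$; the converse is deduced from Theorem \ref{thm:isogeny}(i) together with the unipotence of $\Ker(\rV^n_A)$. Your converse is essentially that same computation done by hand, via descent along the $N$-torsor $\rV^n_A$ with $N$ unipotent because $\rD(N) = \Ker(\rF^n_{\wA})$ is infinitesimal; it has the merit of not presupposing that $E$ is homogeneous, a point which the paper's one-line appeal to Theorem \ref{thm:isogeny}(i) leaves implicit, and its only cosmetic imprecision is that the identification of equivariant structures on $\cO^{\oplus r}$ with homomorphisms $N \to \GL_r$ should be phrased functorially, using $\cO(T \times A^{(p^n)}) = \cO(T)$ for a test scheme $T$, rather than via $k$-points of the automorphism group. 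Your forward direction is where the routes really diverge: the induction on the length of the filtration, with the key input that $\rV_B^*$ annihilates $\rH^1(B,\cO_B)$ because it is $\Lie$ of $\rF_{\wB}$ under $\rH^1(B,\cO_B) \simeq \Lie(\wB)$, replaces the paper's structural input (the universal affine cover and the nilpotency of Verschiebung on commutative unipotent algebraic groups) by classical abelian-variety facts (Frobenius--Verschiebung duality and the identification of $\rH^1$ with the Lie algebra of the dual). What the paper's route buys is economy and uniformity within its own machinery: the exponent $n_0$ is read off at once from the algebraic quotient $H_A/\Ker(\rho)$, and the proof sits naturally beside Theorem \ref{thm:unip}. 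What yours buys is self-containedness: apart from Proposition \ref{prop:irr} (which could itself be replaced by the standard fact that a finite commutative group scheme with infinitesimal Cartier dual admits only the trivial irreducible module), it uses nothing from Sections 2--3.
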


\begin{proof}
Assume that $E$ is unipotent. Then there exists a
finite-dimensional representation
$\rho: H_A \to \GL(V)$ such that $E = \cL_A(V)$ and
$\Ker(\rho) \supset M_A$. Thus, $H_A/\Ker(\rho)$
is a commutative unipotent algebraic group, and
hence there exists $n_0$ such that
$\rV^n_{H_A/\Ker(\rho)} = 0$ for $n \geq n_0$ 
(see \cite[IV.3.4.11]{DG}). As a consequence, 
$\rho \circ \rV^n_{H_A}$ is trivial for $n \geq n_0$. 
In view of the commutative diagram with exact rows
\[ \xymatrix{
0 \ar[r] & H_A^{(p^n)} \ar[r] \ar[d]^{\rV^n_{H_A}} 
& G_A^{(p^n)} \ar[r]^-{f_A^{(p^n)}} \ar[d]^{\rV^n_{G_A}} 
& A^{(p^n)}  \ar[r] \ar[d]^{\rV^n_A} & 0 \\
0 \ar[r] & H_A \ar[r]  & G_A  \ar[r]^-{f_A} & A \ar[r] & 0, \\
} \]
it follows that $(\rV^n_A)^* \, E$ is trivial 
for $n \geq n_0$.

Conversely, assume that $(\rV^n_A)^* \, E$ is trivial
for $n \geq n_0$.
By Theorem \ref{thm:isogeny} (i), it follows that
$E \simeq G_A \times^{H_A} V$, where $V$ is a 
finite-dimensional $H_A$-module which restricts
trivially to $H_A^{(p^n)}$ via $H(\rV^n_A)$. Thus, 
\[ E \simeq G_A/H_A^{(p^n)} 
\times^{H_A/H_A^{(p^n)}} V 
\simeq A^{(p^n)} \times^{\Ker(\rV^n_A)} V. \]
Since $\Ker(\rV^n_A)$ is unipotent, it follows
that $E$ is unipotent as well.
\end{proof}

\begin{remark}\label{rem:ess}
The \emph{essentially finite} vector bundles $E$
over $A$, i.e., those such that $f^*(E)$ is trivial
for some torsor $f : X \to A$ under a finite group scheme,
admit a similar characterization. Indeed, as shown by Nori
in \cite{No83}, for any such bundle $E$, there exists $n > 1$ 
such that $n_a^*(E)$ is trivial. 
As a consequence, the essentially finite vector bundles
are exactly the iterated extensions of irreducible homogeneous
vector bundles associated with torsion points of
$\wA(\bar{k})$.

Also, $U_A$ is canonically isomorphic to the 
``nilpotent fundamental group scheme'' $U(A,0)$ introduced
and studied by Nori in \cite[Chap.~IV]{No82} in the more general
setting of pointed schemes of finite type. This follows 
from [loc.~cit., Prop.~1]; note that the ``nilpotent group 
schemes'' considered there are exactly the unipotent group 
schemes in the sense of \cite[IV.2.2]{DG}.   
\end{remark}

\medskip

\noindent
{\bf Acknowledgments.} Many thanks to Corrado De Concini, 
Pedro Luis del \'Angel Rodr\'{\i}guez, Ga\"el R\'emond, 
\'Alvaro Rittatore, Catharina Stroppel, Tam\'as Szamuely
and Angelo Vistoli for very helpful discussions. Also, thanks 
to the anonymous referee for his/her careful reading
and valuable comments.

\bibliographystyle{amsalpha}

\end{document}